\newcommand{\vp}{\varphi}
\newcommand{\ve}{\varepsilon}
\newcommand{\ddbar}{\sqrt{-1} \partial \overline{\partial}}
\newcommand{\ol}{\overline}
\begin{document}
\newcounter{remark}
\newcounter{theor}
\setcounter{remark}{0}
\setcounter{theor}{1}
\newtheorem{claim}{Claim}
\newtheorem{theorem}{Theorem}[section]
\newtheorem{lemma}[theorem]{Lemma}
\newtheorem{corollary}[theorem]{Corollary}
\newtheorem{proposition}[theorem]{Proposition}
\newtheorem{question}{question}[section]
\newtheorem{defn}{Definition}[theor]
\numberwithin{equation}{section}

\title{ The $2$-nd  Hessian  type  equation on  almost Hermitian manifolds}
\author[J. Chu]{Jianchun Chu}
\address{Institute of Mathematics, Academy of Mathematics and Systems Science, Chinese Academy of Sciences, Beijing 100190, P. R. China}
\email{chujianchun@gmail.com}
\author[L. Huang]{Liding Huang}
\address{School of Mathematical Sciences, University of Science and Technology of China, Hefei 230026, P. R. China}
\email{hldliding@sina.com}
\author[X. Zhu]{Xiaohua ${\rm Zhu}^*$}
\address{School of Mathematical Sciences, Peking University, Yiheyuan Road 5, Beijing 100871, P. R. China }
\email{xhzhu@math.pku.edu.cn}

\thanks {* Partially supported by the NSFC Grants  11331001}
 \subjclass[2010]{Primary: 53C25, 35J60; Secondary:  53C55,
 58J05}
\keywords {  Almost complex manifolds, the $2$-nd  Hessian  type  equation, $\Gamma_k(M)$-space, the second order estimate.}

\begin{abstract}
In this paper, we derive the second order estimate to  the  $2$-nd  Hessian  type equation  on a  compact almost Hermitian manifold.
\end{abstract}
\maketitle

\section{Introduction}

As a generalization of  Laplace  equation and complex Monge-Amp\`{e}re equation on  a  complex manifold $M$,
the following  $k$-th  complex Hessian equation ($1<k<n$) has been studied extensively,
\begin{equation}\label{k-th complex Hessian equation}
\left\{\begin{array}{ll}
\ (\omega+\ddbar\vp)^{k}\wedge\omega^{n-k}=e^{F}\omega^{n}\\[1mm]
\ \omega+\ddbar\vp\in\Gamma_{k}(M)\\[1mm]
\ \sup_{M}\vp=0,
\end{array}\right.
\end{equation}
where $\Gamma_{k}(M)$ is the  space of  $k$-th convex  $(1,1)$-forms (cf. Section 2).  When $(M,\omega)$ is a compact K\"{a}hler manifold,
the second order a priori estimate was obtained by Hou \cite{Hou09}  and Hou-Ma-Wu \cite{HMW10}.  Lately,  by using  Hou-Ma-Wu's result,   Dinew-Ko{\l}odziej \cite{DK12}  solved the existence of   (\ref{k-th complex Hessian equation}).  Sz\'ekelyhidi \cite{Sze15}  extended  Dinew-Ko{\l}odziej's result     to a  Hermitian  manifold   (see also \cite{Zha15} by Zhang).

The $2$-nd complex Hessian type  equation plays an important role in  Strominger system from  the string theory \cite{Str86}. In \cite{FY08}, Fu-Yau reduced the Strominger system to an equation
\begin{equation}\label{Fu-Yau equation 1}
\ddbar(e^{\vp}-fe^{-\vp})\wedge\omega^{n-1}+n\alpha\ddbar\vp\wedge\ddbar\vp\wedge\omega^{n-2}+\mu\frac{\omega^{n}}{n!}=0,
\end{equation}
where $\alpha\in\mathbb{R}$ is a slope parameter  and $f,\mu\in C^{\infty}(M)$ satisfy  some admissible conditions.  They found that
(\ref{Fu-Yau equation 1}) can be written  as  a general $2$-nd Hessian equation,
\begin{equation}\label{Fu-Yau equation 2}
\left\{\begin{array}{ll}
\ ((e^{\vp}+fe^{-\vp})\omega+2n\alpha\ddbar\vp)^{2}\wedge\omega^{n-2}=e^{F(z,\partial\vp,\vp)}\omega^{n}\\[1mm]
\ (e^{\vp}+fe^{-\vp})\omega+2n\alpha\ddbar\vp\in\Gamma_{2}(M),\\[1mm]
\end{array}\right.
\end{equation}
where
\begin{equation*}
\begin{split}
e^{F(z,\partial\vp,\vp)} & = e^{2\vp}(1-4\alpha e^{-\vp}|\partial\vp|_{g}^{2})+4\alpha fe^{-\vp}|\partial\vp|_{g}^{2}+2f\\
                      & \quad +e^{-2\vp}f^{2}-\frac{4\alpha\mu}{n-1}+4\alpha e^{-\vp}\left(\Delta f-2\textrm{Re}(g^{i\overline{j}}f_{i}\vp_{\overline{j}})\right).
\end{split}
\end{equation*}
By (\ref{Fu-Yau equation 2}),  Fu-Yau \cite{FY07,FY08} solved (\ref{Fu-Yau equation 1}) on a toric fibration over a K3 surface.  Recently, Phong-Picard-Zhang \cite{PPZ15} obtained  a priori estimates of (\ref{Fu-Yau equation 2}) with slope parameter $\alpha>0$ on a compact K\"{a}hler manifold. In \cite{PPZ16b}, they also solved the existence of (\ref{Fu-Yau equation 2})  with slope parameter $\alpha<0$.

In this paper, we  generalize  the $2$-nd complex Hessian equation  to an   almost Hermitian manifold $(M,\omega,J)$ and consider equation,
\begin{equation}\label{General 2-nd Hessian equation}
\left\{ \begin{array}{ll}
(\chi(z,\vp)+\ddbar\vp)^{2}\wedge\omega^{n-2}=e^{F(z,\partial\vp,\vp)}\omega^{n} \\[1mm]
( \chi(z,\vp)+\ddbar\vp) \in\Gamma_{2}(M),\\[1mm]
\end{array}\right.
\end{equation}
where $\chi(z,\vp)$ is a positive $(1,1)$-form which may depend on the solution $\varphi$.   We prove the following $C^2$-estimate.

\begin{theorem}\label{Generalized second order estimate}Let  $(M,\omega,J)$ be a compact   almost Hermitian manifold.
 Suppose that  $\chi(z,\varphi)\geq\ve_{0}\omega$ for a positive constant $\ve_{0}>0$ and $\vp$ is a smooth solution of (\ref{General 2-nd Hessian equation}). Then  the following estimate holds,
\begin{equation}\label{c2-varphi}
\sup_{M}|\nabla^{2}\vp|_{g}\leq C,
\end{equation}
where $\nabla$ is the Levi-Civita connection  of $g$ and $C$ is a uniform  constant depending only on $\ve_{0}$, $\|\vp\|_{C^{1}}$, $\|F\|_{C^{2}}$, $\|\chi\|_{C^{2}}$ and $(M,\omega,J)$.
\end{theorem}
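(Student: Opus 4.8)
The plan is to adapt to the almost Hermitian setting the maximum principle argument of Hou--Ma--Wu \cite{HMW10} for the complex $k$-Hessian equation, using the techniques developed by Chu--Tosatti--Weinkove for the complex Monge--Amp\`ere equation on almost Hermitian manifolds to control the torsion of $J$, together with the refinements of Phong--Picard--Zhang \cite{PPZ15} needed to absorb the extra derivatives produced by the dependence of $F$ on $\partial\vp$. Write $\mathfrak{g} := \chi(z,\vp) + \ddbar\vp \in \Gamma_2(M)$, with eigenvalues $\lambda_1 \ge \cdots \ge \lambda_n$ relative to $\omega$. The first step is a reduction: it suffices to prove $\sup_M \lambda_1 \le C$. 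Indeed, $\mathfrak{g}\in\Gamma_2(M)$ together with the two-sided bound on $\sigma_2(\mathfrak{g})$ coming from (\ref{General 2-nd Hessian equation}) shows that a bound on $\lambda_1$ bounds every eigenvalue of $\mathfrak{g}$, hence $|\ddbar\vp|_g$; and since on an almost complex manifold $\partial\partial\vp = -\mu(\ol{\partial}\vp)$ with $\mu$ the Nijenhuis operator (a consequence of $d^2=0$), the $J$-anti-invariant part of $\nabla^2\vp$ is automatically controlled by $\|\vp\|_{C^1}$ and the torsion of $J$, while its $J$-invariant part is governed by $\ddbar\vp=\mathfrak{g}-\chi(z,\vp)$; hence $|\nabla^2\vp|_g \le C(1+\sup_M\lambda_1)$.

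To bound $\lambda_1$ I would apply the maximum principle to $Q = \log\lambda_1 + \phi(|\partial\vp|_g^2) + \psi(\vp)$, where $\phi$ is a fixed strictly convex increasing function with $\phi',\phi''$ of controlled size and $\psi$ a strictly convex decreasing function with $\psi' \le -A$, $A$ a large constant to be fixed at the end; these are the standard auxiliary functions of \cite{HMW10} and the almost Hermitian literature. Since $\lambda_1$ may be non-smooth when the top eigenvalue is not simple, one either perturbs $\mathfrak{g}$ by a fixed form to split it at the maximum point or argues with one-sided derivatives. At a maximum point $x_0$ of $Q$, pick a local $g$-unitary frame diagonalizing $\mathfrak{g}$, let $\mathcal{L} = \sum_{i,j} F^{i\ol{j}}\nabla_i\nabla_{\ol{j}}$ be the linearization with $F^{i\ol{j}} = \partial\log\sigma_2(\mathfrak{g})/\partial\mathfrak{g}_{i\ol{j}}$ (at $x_0$ diagonal and positive), and recall that on $\Gamma_2$ one has $\mathcal{F} := \sum_i F^{i\ol{i}} \ge \tau > 0$, $F^{1\ol1}\le\cdots\le F^{n\ol n}$, and $\sum_i F^{i\ol{i}}\lambda_i = 2$ by the Euler relation. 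The critical point equation gives $\nabla_i\lambda_1 = -\lambda_1\nabla_i(\phi+\psi)$, so $|\nabla\lambda_1| = O(\lambda_1)$, and $0 \ge \mathcal{L}Q(x_0)$.

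Now differentiate $\log\sigma_2(\mathfrak{g}) = F(z,\partial\vp,\vp)$ twice, commute covariant derivatives on $(M,\omega,J)$ (producing curvature terms and the extra $\nabla J$ and Nijenhuis terms, which are the new feature here), and use the concavity of $\log\sigma_2$ on $\Gamma_2$; this yields, modulo lower order terms,
\begin{equation*}
\mathcal{L}(\log\lambda_1) \ge \frac{1}{\lambda_1}\sum_{p>1}\frac{2F^{p\ol p}|\nabla_1\mathfrak{g}_{p\ol 1}|^2}{\lambda_1-\lambda_p} - \frac{1}{\lambda_1^2}\sum_i F^{i\ol i}|\nabla_i\mathfrak{g}_{1\ol1}|^2 - C\mathcal{F} - C\mathcal{F}\cdot\frac{\lambda_1+1}{\lambda_1},
\end{equation*}
the term $F_p\cdot\nabla_1\nabla_{\ol1}\partial\vp$ inside $\nabla_1\nabla_{\ol1}F$ being linear in third derivatives and controlled via $|\nabla\lambda_1|=O(\lambda_1)$ and $\mathcal{F}\ge\tau$. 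The Hou--Ma--Wu inequality then shows that, after separating the indices $p$ with $\lambda_p$ close to $\lambda_1$ from the rest, the bad third order term $\lambda_1^{-2}\sum_i F^{i\ol i}|\nabla_i\mathfrak{g}_{1\ol1}|^2$ is dominated by the concavity term together with the positive term $\phi'\sum_i F^{i\ol i}|\nabla_i\nabla_{\ol i}\vp|^2$ coming from $\mathcal{L}(\phi(|\partial\vp|_g^2))$. It remains to absorb the lower order terms. Computing $\mathcal{L}(\phi(|\partial\vp|_g^2))$ also yields the nonnegative quantity $\phi'\sum_i F^{i\ol i}(\lambda_i + O(\|\vp\|_{C^1}))^2$, the $(2,0)$-Hessian contribution $\phi'\sum_i F^{i\ol i}|\nabla_i\nabla_j\vp|^2$ which by $\partial\partial\vp = -\mu(\ol{\partial}\vp)$ is only $O(\mathcal{F})$, and errors $\ge -C\mathcal{F}\lambda_1 - C\mathcal{F}$; while $\mathcal{L}(\psi(\vp)) = \psi'\,F^{i\ol j}\nabla_i\nabla_{\ol j}\vp + \psi''\sum_i F^{i\ol i}|\nabla_i\vp|^2 \ge A\ve_0\mathcal{F} - CA\mathcal{F} - CA$ by $\chi(z,\vp)\ge\ve_0\omega$ and the Euler relation (the $CA\mathcal{F}$ coming from the $\nabla J$-difference between $(\ddbar\vp)_{i\ol j}$ and $\nabla_i\nabla_{\ol j}\vp$). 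Following the dichotomy of \cite{HMW10}: if $\lambda_n \le -\delta\lambda_1$ for a small fixed $\delta$, then $\phi'\sum_i F^{i\ol i}\lambda_i^2 \ge \phi' F^{n\ol n}\lambda_n^2 \ge c\,\mathcal{F}\,\lambda_1^2$, which dominates every other term and forces $\lambda_1\le C$; otherwise all $\lambda_i > -\delta\lambda_1$, the $(2,0)$-ambiguity and the remaining third order terms are controlled using the precise structure of $\sigma_2$ on $\Gamma_2$ (in particular $F^{1\ol1}=\sigma_1(\lambda|1)/\sigma_2$ being small when $\lambda_1$ is large compared to $\sigma_2(\mathfrak{g})$), and with $A$ chosen large (depending on $\ve_0$, $\|\vp\|_{C^1}$, $\|F\|_{C^2}$, $\|\chi\|_{C^2}$ and $(M,\omega,J)$) one again obtains $\lambda_1\le C$. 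In either case $\sup_M\lambda_1\le C$, and the reduction gives (\ref{c2-varphi}).

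I expect the \emph{main obstacle}, beyond the complex/Hermitian case, to be the systematic treatment of the torsion terms of the almost complex structure ($\nabla J$ and the Nijenhuis tensor): they enter the reduction $|\nabla^2\vp|_g \lesssim 1+\sup_M\lambda_1$, all the commutation formulas, $\mathcal{L}(\lambda_1)$ and $\mathcal{L}(|\partial\vp|_g^2)$, and they interact with the second derivatives appearing in $\nabla F$ because $F$ depends on $\partial\vp$. One must show each such term is at worst $O(\mathcal{F}\lambda_1)$, or $O(\mathcal{F})$, with constants independent of $A$ and $\lambda_1$, so that the good terms above absorb them; this is carried out along the lines of Chu--Tosatti--Weinkove and \cite{PPZ15}.
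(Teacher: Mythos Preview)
Your reduction step contains a genuine gap. You claim that a bound on the top eigenvalue $\lambda_1$ of the complex Hessian $\mathfrak{g}=\chi+\ddbar\vp$ controls $|\nabla^2\vp|_g$, because ``$\partial\partial\vp=-\mu(\ol\partial\vp)$'' forces the $J$-anti-invariant part of $\nabla^2\vp$ to be first order. This is false: the identity $\partial^2\vp=-\mu(\ol\partial\vp)$ concerns the antisymmetric $(2,0)$-\emph{form}, whereas the $J$-anti-invariant part of the \emph{symmetric} tensor $\nabla^2\vp$ consists of the quantities $\nabla^2\vp(e_i,e_j)=e_ie_j(\vp)-(\nabla_{e_i}e_j)(\vp)$, which are genuinely second order already in the integrable case (locally, take $\vp=\mathrm{Re}(z_1^N)$: then $\ddbar\vp=0$ but $|\nabla^2\vp|\sim N^2$ while $\|\vp\|_{C^1}\sim N$). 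This is exactly why the paper, following Chu--Tosatti--Weinkove, runs the maximum principle on $\log\lambda_1(\nabla^2\vp)$, the largest eigenvalue of the \emph{real} Hessian, rather than on $\log\lambda_1(\mathfrak{g})$.

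The same error recurs downstream. In your treatment of $\mathcal{L}(\phi(|\partial\vp|_g^2))$ you dismiss $\phi'\sum_{i,k}F^{i\ol i}|e_ie_k(\vp)|^2$ as $O(\mathcal{F})$ via the same identity; in fact this term is not small, and in the paper it is one of the crucial \emph{good} terms, used (Lemma~\ref{control C lambda1}) to absorb the $-C\lambda_1$ that appears when two derivatives land on the $\partial\vp$-slot of $F$. Furthermore, even your proposed bound on $\lambda_1(\mathfrak{g})$ would not close on an almost Hermitian manifold: the commutators in $\mathcal{L}(\log\lambda_1)$ produce mixed third-order terms such as $G^{i\ol i}[V_1,e_i]V_1\ol e_i(\vp)$ and uncontrolled $e_ie_j(\vp)$ contributions that are not dominated by the $(1,1)$-Hessian alone. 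The paper deals with these by working with the real Hessian from the outset and then relating the real eigenvector $V_1$ back to the complex frame (Lemmas~\ref{Lemma 3} and \ref{Lemma 9}), together with a structural analysis of the eigenvalues and eigenvectors of $(-G^{i\ol i,j\ol j})$ (Lemma~\ref{Lemma 6 and Lemma 7}); none of this has a counterpart in your sketch, and the Hou--Ma--Wu dichotomy by itself does not supply it.
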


We note that Theorem \ref{Generalized second order estimate} holds for any solution $\varphi$  with $(\chi(z,\vp)+\ddbar\vp)\in\Gamma_{2}(M)$ and we do not need to  assume that $\varphi$ is  $\chi(z,\vp)$-convex. When $M$ is K\"{a}hlerian, an analogy of (\ref{c2-varphi}) was obtained for some special function $F$ by Phong-Picard-Zhang \cite{PPZ15,PPZ16b}. In another paper  \cite{PPZ16a},  they also got similar  estimate  (\ref{c2-varphi}) for $\chi(z,\vp)$-convex solutions for  general $k$-th complex Hessian equation on a  K\"{a}hler manifold.

Compared to the work of Phong-Picard-Zhang \cite{PPZ15,PPZ16b}, our method is quite different. First, for general right hand side $F(z,\partial\vp,\vp)$, there are more troublesome terms when one differentiates the equation (\ref{General 2-nd Hessian equation}). We overcome this new obstacle by investigating the structure of $\log\sigma_{2}$ (see Lemma \ref{control C lambda1}). Second, since the almost complex structure $J$ may be not integrable, there are more "bad" third order terms. In order to deal with these terms, we need to analyse the concavity of the operator $\log\sigma_{2}$. More precisely, we estimate the eigenvalues and eigenvectors of the matrix $(-G^{i\overline{i},j\overline{j}})$ (see Lemma \ref{Lemma 6 and Lemma 7}). The structure of $\log\sigma_{2}$ plays an important role in the proof, which involves some delicate calculations. We expect that the analogous argument  can be extended  to study  $\log\sigma_{k}$ ($k>2$).

More recently,  Chu-Tosati-Weinkove \cite{CTW16} studied the Monge-Amp\`{e}re equation on compact almost Hermitian manifolds and  proved  the existence and uniqueness of solutions for generalized Calabi-Yau equation.  Since the manifold is just almost Hermitian, they  gave an approach  to estimate the  Hessian of solution instead of its complex Hessian.   Our motivation is from their work. In addition, the almost Hermitian manifold is a natural research object in non-K\"{a}hler geometry. The motivation of study is  from differential geometry as well as  mathematical physics.   We refer  the reader to  interesting  papers such as \cite{GSVY90,NH03,DeT06,ST12,HL15}, etc.

At present, our  computations  just work for (\ref{General 2-nd Hessian equation}), not available  for general  $k$-th complex Hessian equation.  On the other hand,   the constant $C$ in  (\ref{c2-varphi})   depends on  the norm of $\partial\vp$.   We hope that there exists a   $C^2$-estimate to  (\ref{General 2-nd Hessian equation})  which  may give an   explicit   dependence  on $\partial\vp$, so that it can be applied to study the existence of  (\ref{General 2-nd Hessian equation}) as in  \cite{PPZ16b}.

The organization of paper is as follows. In Section 2,  we introduce an  auxiliary function $\hat Q$  in order to estimate the largest eigenvalue of Hessian matrix of solution  of   $\sigma_2$-equation.  Then in Section 3, we  estimate the lower bound of $L(\hat Q)$  for the linear elliptic operator $L$ of $\sigma_2$-equation. The main estimate will be given in Section 4, where Theorem \ref{Generalized second order estimate} will be proved at the end. In Section 5, we give the proof of Lemma \ref{Lemma 6 and Lemma 7}.

\bigskip

{\bf Acknowledgments. }The first-named author would like to thank his advisor G. Tian for encouragement and support. He also thanks  V. Tosatti and B. Weinkove for their collaboration.

\section{Preliminaries}
\subsection{$\Gamma_k(M)$-space}

On an  almost Hermitian manifold  $(M,\omega,J)$ with  real dimension $2n$,      $\partial$ and $\overline{\partial}$ operators can be also defined for any $(p,q)$-form $\beta$ (cf. \cite{HL15,CTW16}).  In particular,  for any  $f\in C^{2}(M)$,  $\ddbar f=\frac{1}{2}(dJdf)^{(1,1)}$ is a real $(1,1)$-form in $A^{1,1}(M)$, where $A^{1,1}(M)$ is the space of smooth real (1,1) forms on $(M,\omega,J)$. Let $\{e_{i}\}_{i=1}^{n}$ be a local frame for $T_{\mathbb{C}}^{(1,0)}M$ associated to  Riemannian metric $g$ on $(M,\omega,J)$.   Then (cf. \cite[(2.5)]{HL15})
\begin{equation*}
f_{i\overline{j}}=(\ddbar f)(e_{i},\overline{e}_{j})=e_{i}\overline{e}_{j}(f)-[e_{i},\overline{e}_{j}]^{(0,1)}(f).
\end{equation*}

As usually,  we let  $\sigma_{k}$  ($1\leq k\leq n$)  and $\Gamma_{k}$ be  the $k$-th elementary symmetric function and   the $k$-th Garding cone on $\mathbb R^n$, respectively. Namely,
 for any $\eta=(\eta_{1},\eta_{2},\cdots,\eta_{n})\in\mathbb{R}^{n}$, we have
\begin{equation*}
\begin{split}
& \sigma_{k}(\eta) =\sum_{1<i_{1}<\cdots<i_{k}<n}\eta_{i_{1}}\eta_{i_{2}}\cdots\eta_{i_{n}},\\
\Gamma_{k}  = & \{ \eta\in\mathbb{R}^{n}~|~ \text{$\sigma_{j}(\eta)>0$ for $j=1,2,\cdots,k$} \}.
\end{split}
\end{equation*}
Clearly $\sigma_k$ is a $k$-multiple functional.  Then one can extend it to $A^{1,1}(M)$ by
\begin{equation*}
\sigma_{k}(\alpha)=\left(
\begin{matrix}
n\\
k
\end{matrix}
\right)
\frac{\alpha^{k}\wedge\omega^{n-k}}{\omega^{n}}, ~\forall ~\alpha\in A^{1,1}(M).
\end{equation*}
Define  a  cone $\Gamma_{k}(M)$ on   $A^{1,1}(M)$ by
\begin{equation*}
\Gamma_{k}(M)=\{ \alpha\in A^{1,1}(M)~|~ \text{$\sigma_{j}(\alpha)>0$ for $j=1,2,\cdots,k$} \}.
\end{equation*}
Thus  we   can  introduce  a $\sigma_k(\cdot)$ operator for any $\varphi\in C^\infty(M)$ with  $\tilde \omega= (\chi+\ddbar\vp)\in \Gamma_{k}(M)$  by
$$\sigma_k(\chi+\ddbar\vp),$$
where $\chi$ is a real $(1,1)$-form, which may depend on $\varphi$.

In this paper, we  are interested in $\sigma_2$ operator.  We use the following notation
\begin{equation*}
G^{i\overline{j}}=\frac{\partial\log\sigma_{2}(\tilde{\omega})}{\partial \tilde{g}_{i\overline{j}}} \text{~and~} G^{i\overline{j},k\overline{l}}=\frac{\partial^{2}\log\sigma_{2}(\tilde{\omega})}{\partial \tilde{g}_{i\overline{j}}\partial \tilde {g}_{k\overline{l}}},
\end{equation*}
where  $\tilde{g}_{i\overline{j}}=\chi_{i\overline{j}}+\vp_{i\overline{j}}$.
For any point $x_{0}\in M$, let $\{e_{i}\}_{i=1}^{n}$ be a local unitary frame (with respect to $g$) such that $\tilde{g}_{i\overline{j}}(x_{0})=\delta_{ij}\tilde{g}_{i\overline{i}}(x_{0})$. We denote $\tilde{g}_{i\overline{i}}(x_{0})$ by $\eta_{i}$
and assume
\begin{equation*}
\eta_{1}\geq\eta_{2}\geq\cdots\geq\eta_{n}.
\end{equation*}
  Then at $x_{0}$, we have
\begin{equation}\label{the first order derivative of F}
G^{i\overline{j}}=G^{i\overline{i}}\delta_{ij}=\frac{\sigma_{1}(\eta|i)}{\sigma_{2}(\eta)}\delta_{ij},
\end{equation}
where $\sigma_{1}(\eta|i) = \sum_{j\neq i}\eta_{j}$.   Also we  have
\begin{equation*}\label{the second order derivative of F}
G^{i\overline{j},k\overline{l}}=
\left\{\begin{array}{ll}
G^{i\overline{i},k\overline{k}}, \text{~~~~if $i=j$, $k=l$;}\\[1mm]
G^{i\overline{k},k\overline{i}}, \text{~~~~if $i=l$, $k=j$, $i\neq k$;}\\[1mm]
0,  \text{\quad\quad\quad~otherwise.}
\end{array}\right.
\end{equation*}
Moreover,
\begin{equation}\label{Definition of Fijkl}
\begin{split}
G^{i\overline{i},k\overline{k}} & = (1-\delta_{ik})(\sigma_{2}(\eta))^{-1}-(\sigma_{2}(\eta))^{-2}\sigma_{1}(\eta|i)\sigma_{1
}(\eta|k),\\
G^{i\overline{k},k\overline{i}} & = -(\sigma_{2}(\eta))^{-1}.
\end{split}
\end{equation}
Without  a confusion, we use $\sigma_{1}$, $\sigma_{2}$ and $\sigma_{1}(i)$ to denote $\sigma_{1}(\eta)$, $\sigma_{2}(\eta)$ and $\sigma_{1}(\eta|i)$, respectively.
The following inequalities are very useful.

\begin{lemma}
At $x_{0}$, we have
\begin{equation}\label{lower bound of sum Fii}
\sum_{i}G^{i\overline{i}}\geq \frac{2(n-1)}{n}\sigma_{2}^{-\frac{1}{2}},
\end{equation}
\begin{equation}\label{Inequality 3}
\eta_{1}\sigma_{1}(1)\geq \frac{2}{n}\sigma_{2},
\end{equation}
\begin{equation}\label{Inequality 2}
G^{i\ol{i}}\geq C\sum_{k}G^{k\ol{k}} \text{~for~} i\geq 2.
\end{equation}

\end{lemma}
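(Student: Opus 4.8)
The three inequalities are purely algebraic facts about $\sigma_1,\sigma_2$ on the cone $\Gamma_2$, so everything is reduced to elementary estimates in the ordered variables $\eta_1\ge\cdots\ge\eta_n$ with $\sigma_1(\eta)>0$, $\sigma_2(\eta)>0$. The plan is to handle each in turn.

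For \eqref{lower bound of sum Fii}, I would start from $\sum_i G^{i\ol i}=\sum_i\sigma_1(i)/\sigma_2=\big((n-1)\sigma_1\big)/\sigma_2$, since $\sum_i\sigma_1(\eta|i)=(n-1)\sigma_1$. Thus the claim is equivalent to $(n-1)\sigma_1/\sigma_2\ge \tfrac{2(n-1)}{n}\sigma_2^{-1/2}$, i.e. $\sigma_1\ge\tfrac{2}{n}\sigma_2^{1/2}$, i.e. $n^2\sigma_1^2\ge 4\sigma_2$. But Newton's inequality (or a direct computation) gives $\sigma_1^2\ge 2\sigma_2$ whenever $\sigma_1^2\ge 2\sigma_2\cdot\frac{n}{n-1}$ is too strong; more simply $\sigma_1^2=\sum\eta_i^2+2\sigma_2\ge 2\sigma_2$, hence $n^2\sigma_1^2\ge n^2\cdot 2\sigma_2\ge 4\sigma_2$ for $n\ge 2$. (One must note $\sigma_1>0$ on $\Gamma_2$, so taking square roots is legitimate.) This is routine.

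For \eqref{Inequality 3}, write $\sigma_2=\eta_1\sigma_1(1)+\sigma_2(\eta|1)$, where $\sigma_2(\eta|1)$ is $\sigma_2$ of the remaining $n-1$ variables. The point is that $\sigma_2(\eta|1)$ cannot be too large compared to $\eta_1\sigma_1(1)$: indeed each of the $\binom{n-1}{2}$ terms $\eta_i\eta_j$ ($i,j\ge 2$) satisfies $\eta_i\eta_j\le\eta_1\eta_j$ if $\eta_j\ge 0$, while if some $\eta_j<0$ one must argue via the cone condition. A cleaner route: since we are on $\Gamma_2$ and $\eta_1$ is the largest, $\sigma_1(1)=\sigma_1-\eta_1>0$ cannot be assumed, so instead I would use that $\eta_1\sigma_1=\eta_1^2+\eta_1\sigma_1(1)$ together with $\sigma_1^2\ge\tfrac{2n}{n-1}\sigma_2$ (a standard Maclaurin/Newton inequality on $\Gamma_2$), and the elementary bound $\eta_1\ge\sigma_1/n$ (since $\eta_1$ is the maximum and $\sigma_1>0$). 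Combining, $\eta_1\sigma_1\ge\sigma_1^2/n\ge\tfrac{2}{n-1}\sigma_2$. Then $\eta_1\sigma_1(1)=\eta_1\sigma_1-\eta_1^2$; one needs $\eta_1^2\le\eta_1\sigma_1-\tfrac2n\sigma_2$, which after rearranging and using the above should follow — I expect the sharp constant $\tfrac2n$ to come out exactly, possibly requiring the finer inequality $\eta_1\sigma_1(1)\ge\sigma_2(\eta|1)$ type comparison. This is the step with the most bookkeeping but still elementary; I would not expect genuine difficulty.

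For \eqref{Inequality 2}, we need $G^{i\ol i}=\sigma_1(i)/\sigma_2\ge C\cdot(n-1)\sigma_1/\sigma_2$ for $i\ge 2$, i.e. $\sigma_1(\eta|i)\ge C'\sigma_1$ for $i\ge 2$. Since $\sigma_1(\eta|i)=\sigma_1-\eta_i$ and $i\ge 2$ means $\eta_i\le\eta_1$ but also $\eta_i$ could be negative (in which case $\sigma_1(\eta|i)>\sigma_1$ and there is nothing to prove). The real content is the case $\eta_i>0$: then $\sigma_1(\eta|i)=\sigma_1-\eta_i\ge\sigma_1-\eta_2$ for $i\ge 2$, and one must show $\sigma_1-\eta_2\ge C'\sigma_1$, equivalently $\eta_2\le(1-C')\sigma_1$. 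This is where I expect the \textbf{main obstacle}: a priori $\eta_2$ could be almost as large as $\sigma_1$ if $\eta_3,\dots,\eta_n$ are large and negative. Here the cone condition $\sigma_2>0$ must be used decisively — if $\eta_1,\eta_2$ are both of size comparable to $\sigma_1$ and the rest are very negative, then $\sigma_2<0$, a contradiction. Quantitatively, $\sigma_2>0$ and $\eta_1\ge\eta_2$ force $\eta_1\sigma_1(1)+\sigma_2(\eta|1)>0$; one shows $\sigma_2(\eta|1)\ge -\tfrac{(n-2)}{2\cdot 1?}\ldots$ — more practically, on $\Gamma_2$ one has the well-known bound $\eta_1+\cdots+\widehat{\eta_i}+\cdots+\eta_n=\sigma_1(\eta|i)>0$ for \emph{every} $i$ (this is a standard fact: on $\Gamma_k$, $\sigma_1(\eta|i)>0$ for all $i$, and more precisely $\sigma_1(\eta|i)\ge c_n\sigma_1$). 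I would invoke/prove this structural fact for $\Gamma_2$: for $i\ge 2$, $\sigma_1(\eta|i)\ge\sigma_1(\eta|1)=\sigma_1-\eta_1$, and separately $\eta_1\le\tfrac{n-1}{n}\sigma_1$ is false in general, so one instead shows $\sigma_1-\eta_1\ge c\sigma_1$ fails only when $\eta_1\approx\sigma_1$, forcing all other $\eta_j$ small, which then gives $\sigma_1(\eta|i)=\sigma_1-\eta_i\approx\sigma_1$ for $i\ge2$ anyway. A clean unified argument: for $i\ge 2$, $\sigma_1(\eta|i)\ge\tfrac12(\sigma_1(\eta|1)+\sigma_1(\eta|i))\ge\tfrac12\sigma_1(\eta|i)$... — I would settle this by the standard lemma that on $\Gamma_2$, $\sigma_1(\eta|i)\ge\frac{2}{n-1}\cdot\frac{1}{?}$; in any case the key input is $\sigma_1(\eta|n)>0$ (weakest) plus monotonicity $\sigma_1(\eta|2)\ge\sigma_1(\eta|3)\ge\cdots\ge\sigma_1(\eta|n)$, so it suffices to bound $\sigma_1(\eta|n)$ from below by $c\sigma_1$, and that follows from $\sigma_2>0$: expanding $\sigma_2$ and using $\eta_n$ smallest gives $\sigma_1(\eta|n)\ge\sqrt{2\sigma_2}\ge$ a multiple of $\sigma_1$ only after combining with $\sigma_1^2\le n\sum\eta_j^2$... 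I expect the final constant $C$ in \eqref{Inequality 2} to depend only on $n$ and to emerge from exactly this interplay between $\sigma_2>0$ and the ordering. The write-up should present it as: reduce to $i=n$; use $\sigma_1(\eta|n)>0$ and $\sigma_1(\eta|n)^2\ge 2\sigma_2(\eta|n)\ge 2(\sigma_2-\eta_n\sigma_1(\eta|n))$ to get a quadratic inequality in $\sigma_1(\eta|n)$, solve it, and compare with $\sigma_1$.
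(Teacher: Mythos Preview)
The paper's own proof of this lemma is two lines: it attributes \eqref{lower bound of sum Fii} and \eqref{Inequality 3} to Maclaurin's inequality, and cites Lin--Trudinger \cite{LT94} for \eqref{Inequality 2}. So you are attempting considerably more than the paper does.

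Your argument for \eqref{lower bound of sum Fii} is correct and self-contained: the identity $\sigma_1^2=\sum_i\eta_i^2+2\sigma_2\ge 2\sigma_2$ together with $\sigma_1>0$ on $\Gamma_2$ already yields $\sigma_1\ge\sqrt{2}\,\sigma_2^{1/2}\ge\tfrac{2}{n}\sigma_2^{1/2}$ for $n\ge 2$, without invoking Maclaurin.

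Your treatment of \eqref{Inequality 3} does not close. The route via $\eta_1\ge\sigma_1/n$ and $\sigma_1^2\ge\tfrac{2n}{n-1}\sigma_2$ gives $\eta_1\sigma_1\ge\tfrac{2}{n-1}\sigma_2$, but then subtracting $\eta_1^2$ as you propose fails, since $\eta_1^2$ has no a priori upper bound of the needed size. A working hands-on argument runs as follows: write $\sigma_2=\eta_1\sigma_1(1)+\sigma_2(\eta|1)$; use $2\sigma_2(\eta|1)=\sigma_1(1)^2-\sum_{i\ge2}\eta_i^2\le\tfrac{n-2}{n-1}\sigma_1(1)^2$ (Cauchy--Schwarz on the $n-1$ tail variables); combine with $0<\sigma_1(1)\le(n-1)\eta_1$ to get $2\sigma_2(\eta|1)\le(n-2)\eta_1\sigma_1(1)$; conclude $\sigma_2\le\tfrac{n}{2}\eta_1\sigma_1(1)$. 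You listed most of these ingredients but never chained them.

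Your sketch for \eqref{Inequality 2} contains a concrete error. You assert ``monotonicity $\sigma_1(\eta|2)\ge\sigma_1(\eta|3)\ge\cdots\ge\sigma_1(\eta|n)$'' and then reduce to bounding $\sigma_1(\eta|n)$ from below. But since $\sigma_1(\eta|i)=\sigma_1-\eta_i$ and the $\eta_i$ are \emph{decreasing}, the monotonicity goes the other way: $\sigma_1(\eta|2)\le\cdots\le\sigma_1(\eta|n)$. Hence the case $i=n$ is the easiest, and your reduction is vacuous; the genuine content is the lower bound on $\sigma_1(\eta|2)$, which none of your proposed approaches establishes. (Your earlier heuristic --- if $\eta_1\approx\sigma_1$ then the remaining $\eta_j$ are small --- is also not justified: $\sum_{j\ge2}\eta_j\approx 0$ does not force each $\eta_j$ small.) This inequality is precisely the Lin--Trudinger theorem the paper cites, and its proof requires a more structured argument exploiting the $\Gamma_2$ condition than your outline provides.
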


\begin{proof}
(\ref{lower bound of sum Fii}) and (\ref{Inequality 3}) are direct consequences of Maclaurin's inequality. For  a  proof of (\ref{Inequality 2}), see \cite[Theorem 1]{LT94}.
\end{proof}

We define a second order operator on  $(M,\omega,J)$ by
\begin{equation*}
L(f)=G^{i\overline{j}}f_{i\overline{j}},
\end{equation*}
where $f\in C^{2}(M)$. It is clear that $L$ is the linearized operator of (\ref{General 2-nd Hessian equation}). Since $\chi+\ddbar\vp\in\Gamma_{2}(M)$, $L$ is a second order elliptic operator.  Here we use Einstein notation convention for convenience.

\subsection{An auxiliary function}
As  mentioned in Section 1, we follow the argument in \cite{CTW16} to obtain estimate (\ref{c2-varphi}). For any smooth function $\varphi$,
 we denote the eigenvalues of $\nabla^{2}\vp$ by $\lambda_{1}(\nabla^{2}\vp)\geq\lambda_{2}(\nabla^{2}\vp)\geq\cdots\geq\lambda_{2n}(\nabla^{2}\vp)$.  Since $(\chi+\ddbar\vp)\in \Gamma_{2}(M)\subset\Gamma_{1}(M)$,
\begin{equation*}
|\nabla^{2}\vp|_{g}\leq C\lambda_{1}(\nabla^{2}\vp)+C,
\end{equation*}
for a uniform constant $C$. Hence, it suffices to estimate $\lambda_{1}(\nabla^{2}\vp)$.   On the open set
 $M_+=\{x\in M~|~\lambda_{1}(\nabla^{2}\vp)>0\}$,  we consider the following quantity
\begin{equation*}
Q=\log\lambda_{1}(\nabla^{2}\vp)+h(|\partial\vp|_{g}^{2})+e^{-A\varphi},
\end{equation*}
 where $A$ is a constant to be determined. Without loss of generality, we may assume that
 $M_+$  is nonempty. Otherwise, we get upper bound of $\lambda_{1}(\nabla^{2}\vp)$ directly. Here
\begin{equation*}
h(s)=-\frac{1}{2}\log(1+\sup_{M}|\partial\varphi|^{2}_{g}-s), ~\forall~s\ge 0.
\end{equation*}
Then
\begin{equation}\label{Properties of h}
\frac{1}{2}\geq h^{'}\geq \frac{1}{2+2\sup_{M}|\partial\varphi|^{2}_{g}} \text{~and~} h''\geq 2(h')^{2}.
\end{equation}

We assume that $Q$ attains its maximum at $x_{0}$ on $M_+$. Near $x_{0}$, there exists a local unitary frame $\{e_{i}\}_{i=1}^{n}$ (with respect to $g$) such that at $x_{0}$, we have
\begin{equation*}
\text{$g_{i\overline{j}}=\delta_{ij}$, $\tilde{g}_{i\overline{j}}=\delta_{ij}\tilde{g}_{i\overline{j}}$ and $\tilde{g}_{1\overline{1}}\geq\tilde{g}_{2\overline{2}}\geq\cdots\geq\tilde{g}_{n\overline{n}}$.}
\end{equation*}
For convenience, we denote $\tilde{g}_{i\overline{i}}(x_{0})$ by $\eta_{i}$. On the other hand, since $(M,\omega,J)$ is almost Hermitian, we can find a normal coordinate system $(U,\{x^{\alpha}\}_{i=1}^{2n})$ around $x_{0}$ such that it holds at $x_{0}$,
\begin{equation}\label{real frame and complex frame}
e_{i}=\frac{1}{\sqrt{2}}(\partial_{2i-1}-\sqrt{-1}\partial_{2i}) \text{~for~} i=1,2,\cdots,n
\end{equation}
and
\begin{equation*}
\frac{\partial g_{\alpha\beta}}{\partial x^{\gamma}}=0 \text{~for~} \alpha,\beta,\gamma=1,2,\cdots,2n.
\end{equation*}
Let $V_{1},V_{2},\cdots,V_{n}$ be $g$-unit eigenvectors of $\nabla^{2}\vp$  corresponding to eigenvalues $\lambda_{1}(\nabla^{2}\vp),\lambda_{2}(\nabla^{2}\vp),\cdots,\lambda_{2n}(\nabla^{2}\vp)$ at $x_{0}$. We assume that $V_{\alpha}=V_{\alpha}^{\beta}\partial_{\beta}$ at $x_{0}$ and extend vector $V_{\alpha}$ to  vector fields  on $U$ by taking the components $V_{\alpha}^{\beta}$ to be constant.

When $\lambda_{1}(\nabla^{2}\vp)(x_{0})=\lambda_{2}(\nabla^{2}\vp)(x_{0})$, $\lambda_{1}(\nabla^{2}\vp)$ is not smooth near $x_{0}$. To avoid this non-smooth case, we apply a perturbation argument as in  \cite{CTW16,Sze15,STW15}).  We define an endomorphism $\Phi$ of $TM$  on $U$ by
\begin{equation}\label{Definition of Phi}
\begin{split}
\Phi & = \Phi_{\alpha}^{\beta}~\frac{\partial}{\partial x^{\alpha}}\otimes dx^{\beta}\\
     & = (g^{\alpha\gamma}\vp_{\gamma\beta}-g^{\alpha\gamma}B_{\gamma\beta})\frac{\partial}{\partial x^{\alpha}}\otimes dx^{\beta},
\end{split}
\end{equation}
where $B_{\gamma\beta}=\delta_{\gamma\beta}-V_{1}^{\gamma}V_{1}^{\beta}$.  Let  $\lambda_{1}(\Phi)\geq\lambda_{2}(\Phi)\geq\cdots\geq\lambda_{2n}(\Phi)$  be  the eigenvalues of $\Phi$.  Then  $V_{1},V_{2},\cdots,V_{2n}$ are still eigenvectors of $\Phi$, corresponding to eigenvalues $\lambda_{1}(\Phi),\lambda_{2}(\Phi),\cdots,\lambda_{2n}(\Phi)$ at $x_{0}$.   Moreover, $\lambda_{1}(\Phi)(x_{0})>\lambda_{2}(\Phi)(x_{0})$, which implies $\lambda_{1}(\Phi)$ is smooth near $x_{0}$.   On $U$, we replace $Q$ by  the following smooth quantity
\begin{equation*}
\hat{Q}=\log\lambda_{1}(\Phi)+h(|\partial\vp|_{g}^{2})+e^{-A\varphi}.
\end{equation*}
Since $\lambda_{1}(\nabla^{2}\vp)(x_{0})=\lambda_{1}(\Phi)(x_{0})$ and $\lambda_{1}(\nabla^{2}\vp)\geq\lambda_{1}(\Phi)$, $x_{0}$ is still the maximum point of $\hat{Q}$. For convenience,  we denote $\lambda_{\alpha}(\Phi)$ by $\lambda_{\alpha}$ for $\alpha=1,2,\cdots,2n$.

The following formulas give the first and second derivatives of $\lambda_{1}$ at $x_{0}$ (see e.g. \cite[Lemma 5.2]{CTW16}).

\begin{lemma}\label{First and second derivatives of lambda1}
\begin{equation}
\begin{split}
~&\lambda_{1}^{\alpha\beta}:=\frac{\partial \lambda_{1}}{\partial \Phi^{\alpha}_{\beta}}=V_{1}^{\alpha}V_{1}^{\beta},\\
~&\lambda_{1}^{\alpha\beta,\gamma\delta}:=\frac{\partial^{2}\lambda_{1}}{\partial \Phi^{\alpha}_{\beta}\partial\Phi^{\gamma}_{\delta}}=
\sum_{\mu>1}\frac{V_{1}^{\alpha}V_{\mu}^{\beta}V_{\mu}^{\gamma}V_{1}^{\delta}+V_{\mu}^{\alpha}V_{1}^{\beta}V_{1}^{\gamma}V_{\mu}^{\delta}}{\lambda_{1}-\lambda_{\mu}}.
\end{split}
\end{equation}
where $\alpha,\beta,\gamma,\delta=1,2,\cdots,2n$.
\end{lemma}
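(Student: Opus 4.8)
The plan is to regard $\lambda_1$ as a function of the $4n^2$ entries $\Phi^\alpha_\beta$ in a neighborhood of the matrix $A_0:=(\Phi^\alpha_\beta(x_0))$ and to differentiate the eigenvalue equation directly. I would work in the normal coordinates already fixed in the excerpt, so $g_{\alpha\beta}(x_0)=\delta_{\alpha\beta}$; then $A_0$ is a genuine symmetric matrix with $g$-orthonormal eigenbasis $\{V_\mu\}_{\mu=1}^{2n}$, i.e.\ $\sum_\beta V_\mu^\beta V_\nu^\beta=\delta_{\mu\nu}$, and eigenvalues $\lambda_\mu:=\lambda_\mu(\Phi)(x_0)$. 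By the choice of the correction term $B$ in (\ref{Definition of Phi}), $\lambda_1$ is a \emph{simple} eigenvalue of $A_0$, so $\partial_\lambda\det(\lambda I-A)\neq0$ at $(\lambda_1,A_0)$; the implicit function theorem then produces a smooth (in fact real-analytic) function $A\mapsto\lambda_1(A)$ near $A_0$ together with a smooth unit eigenvector $A\mapsto v(A)$ with $v(A_0)=V_1$. This smooth dependence is the one genuine analytic input, and it is standard; alternatively one may simply invoke \cite[Lemma 5.2]{CTW16}, whose statement is precisely for the endomorphism $\Phi$ of (\ref{Definition of Phi}).

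For the first derivative I would fix a constant matrix $E$, set $A(t)=A_0+tE$, differentiate $A(t)v(t)=\lambda_1(t)v(t)$ at $t=0$ to get $EV_1+A_0\dot v=\dot\lambda_1 V_1+\lambda_1\dot v$, pair with $V_1$, and use $V_1^{T}A_0=\lambda_1 V_1^{T}$ (symmetry of $A_0$) together with $|V_1|^2=1$; the $\dot v$-terms cancel, leaving $\dot\lambda_1=V_1^{T}EV_1$. Taking $E$ to be the elementary matrix with a single $1$ in slot $(\alpha,\beta)$ gives $\partial\lambda_1/\partial\Phi^\alpha_\beta=V_1^\alpha V_1^\beta$, which is the first formula.

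For the second derivative I would keep the straight-line path (so $\ddot A=0$), differentiate the eigenvalue equation twice, and pair with $V_1$ at $t=0$; the $\ddot v$-terms drop out and one is left with $\ddot\lambda_1=2V_1^{T}E\dot v-2\dot\lambda_1 V_1^{T}\dot v$. The crux is then to solve the first-variation equation $(A_0-\lambda_1 I)\dot v=(\dot\lambda_1 I-E)V_1$; its right-hand side is orthogonal to $V_1$ (again because $\dot\lambda_1=V_1^{T}EV_1$), hence lies in the range of $A_0-\lambda_1 I$, so after normalizing $v(t)$ by $V_1^{T}v(t)\equiv1$ (equivalently, inverting on $V_1^{\perp}$) the unique solution is
\begin{equation*}
\dot v=\sum_{\mu>1}\frac{V_\mu^{T}EV_1}{\lambda_1-\lambda_\mu}\,V_\mu,
\end{equation*}
using $(A_0-\lambda_1 I)^{+}=\sum_{\mu>1}(\lambda_\mu-\lambda_1)^{-1}V_\mu V_\mu^{T}$ and $V_\mu^{T}V_1=0$. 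With this choice $V_1^{T}\dot v=0$, so $\ddot\lambda_1=2V_1^{T}E\dot v=2\sum_{\mu>1}(\lambda_1-\lambda_\mu)^{-1}(V_1^{T}EV_\mu)(V_\mu^{T}EV_1)$. Writing $V_1^{T}EV_\mu=V_1^\alpha V_\mu^\beta E^\alpha_\beta$ and $V_\mu^{T}EV_1=V_\mu^\gamma V_1^\delta E^\gamma_\delta$ turns this into $\sum_{\alpha\beta\gamma\delta}E^\alpha_\beta E^\gamma_\delta\big(2\sum_{\mu>1}(\lambda_1-\lambda_\mu)^{-1}V_1^\alpha V_\mu^\beta V_\mu^\gamma V_1^\delta\big)$; since this equals $\sum_{\alpha\beta\gamma\delta}(\partial^2\lambda_1/\partial\Phi^\alpha_\beta\partial\Phi^\gamma_\delta)E^\alpha_\beta E^\gamma_\delta$ for every $E$, and a symmetric bilinear form is determined by its quadratic form, the Hessian must be the $(\alpha\beta)\leftrightarrow(\gamma\delta)$-symmetrization of that coefficient, which is exactly the claimed formula.

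All steps are elementary; the point I expect to require the most care is the computation of $\dot v$ — this is where the restriction $\mu>1$ and the spectral-gap denominators $\lambda_1-\lambda_\mu$ enter — together with the verification that $\ddot\lambda_1$ does not depend on the chosen normalization of $v(t)$ (any ambiguity in $\dot v$ is a multiple of $V_1$, and $(V_1^{T}E-\dot\lambda_1 V_1^{T})V_1=\dot\lambda_1-\dot\lambda_1=0$ annihilates it). The upper/lower-index bookkeeping is harmless here only because $g=\mathrm{Id}$ at $x_0$, which is why I would insist on working in those normal coordinates from the start.
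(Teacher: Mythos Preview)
Your proof is correct; this is the standard perturbation-theoretic derivation of the first and second variation of a simple eigenvalue, and your handling of the normalization of $v(t)$ and the polarization step is clean. The paper itself does not prove this lemma at all: it simply records the formulas and refers the reader to \cite[Lemma 5.2]{CTW16}, which you also mention as an alternative. So your argument is not a different route so much as a fully written-out version of what the paper outsources to a citation.
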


\section{Lower bound of $L(\hat{Q})$}

In this section  we  compute $L(\hat{Q})$ by using equation (\ref{General 2-nd Hessian equation}).
Since the right hand side $F$  of (\ref{General 2-nd Hessian equation})  depends on $\partial\vp$, a trouble  is that  a bad term $-C\lambda_{1}$ will appear when we differentiate (\ref{General 2-nd Hessian equation}) twice.  We  use the structure of the operator $\log\sigma_{2}$  to overcome it (see Lemma \ref{control C lambda1}).

Locally, $F(z,\partial\varphi,\varphi)$ can be regarded as a real-valued function on the set $\Gamma=U\times\mathbb{C}^{n}\times\mathbb{R}$. We denote points in $\Gamma$ typically by $\gamma=(z,p,r)$ where $z\in U$, $p=(p_{1},p_{2},\cdots,p_{n})\in\mathbb{C}^{n}$ and $r\in\mathbb{R}$.  For convenience, we use the following notations
\begin{equation*}
\begin{split}
F_{r} =\frac{\partial F}{\partial r},F_{p_{i}} & =\frac{\partial F}{\partial p_{i}},F_{\overline{p}_{i}}=\frac{\partial F}{\partial \overline{p}_{i}},\\
F_{i} =e_{i}(F(\cdot,p,r)),F_{\overline{i}} & =\overline{e}_{i}(F(\cdot,p,r)),F_{W}=W(F(\cdot,p,r)),
\end{split}
\end{equation*}
where $W$ is a vector field.
In the following, we always compute derivatives at the maximal point $x_0$ of $\hat Q$.
First we show

\begin{lemma}\label{L partial varphi}
\begin{equation}\label{l-nabla-varphi}
\begin{split}
L(|\partial \varphi|_{g}^{2}) & \geq \frac{1}{2}\sum_{k} G^{i\overline{i}}(|e_{i}e_{k}(\varphi)|^{2}+|e_{i}\overline{e}_{k}(\varphi)|^{2})
-C\sum_{i} G^{i\overline{i}}\\
                               &\quad+2\sum_{k,i}{\rm Re}\left(\vp_{k}(F_{p_{i}}\overline{e}_k{e}_{i}(\varphi)+F_{\overline{p}_{i}}\overline{e}_k\overline{e}_{i}(\varphi))\right).
\end{split}
\end{equation}
\end{lemma}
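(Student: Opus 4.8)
The plan is to compute $L(|\partial\varphi|_g^2)=G^{i\overline{i}}\partial_i\partial_{\overline{i}}(|\partial\varphi|_g^2)$ directly in the local unitary frame at $x_0$, using the Bochner-type identity for the operator $e_i\overline{e}_i$ acting on $|\partial\varphi|_g^2=g^{k\overline{l}}\varphi_k\varphi_{\overline{l}}$. First I would expand
\[
e_i\overline{e}_i(|\partial\varphi|_g^2)=\sum_k\left(|e_ie_k(\varphi)|^2+|e_i\overline{e}_k(\varphi)|^2\right)+2\sum_k{\rm Re}\big(\varphi_{\overline{k}}\,e_i\overline{e}_i e_k(\varphi)\big)+(\text{lower order}),
\]
where the lower order terms come from differentiating the frame and the metric coefficients $g^{k\overline{l}}$; these are controlled by $C|\partial\varphi|_g^2$ and $C|\nabla^2\varphi|_g\,|\partial\varphi|_g$, but since we are at a point where $\lambda_1$ is large the genuinely dangerous terms are the ones linear in third derivatives, which must be matched against the differentiated equation. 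Contracting with $G^{i\overline{i}}$ gives the first (good, positive) term and a $-C\sum_i G^{i\overline{i}}$ error from the commutator and torsion terms of the non-integrable almost complex structure.

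The key step is to handle the third-order term $2\sum_k{\rm Re}(\varphi_{\overline{k}}\,G^{i\overline{i}}e_i\overline{e}_ie_k(\varphi))$. I would commute derivatives to rewrite $G^{i\overline{i}}e_i\overline{e}_ie_k(\varphi)$ as $e_k\big(G^{i\overline{i}}\varphi_{i\overline{i}}\big)$ plus commutator terms; here differentiating the equation $\log\sigma_2(\tilde\omega)=F(z,\partial\varphi,\varphi)$ in the direction $e_k$ yields
\[
G^{i\overline{i}}e_k(\tilde g_{i\overline{i}})=F_k+F_r\varphi_k+\sum_i\big(F_{p_i}e_k p_i+F_{\overline{p}_i}e_k\overline{p}_i\big),
\]
and since $\tilde g_{i\overline{i}}=\chi_{i\overline{i}}+\varphi_{i\overline{i}}$, the term $\sum_i(F_{p_i}\overline{e}_ke_i(\varphi)+F_{\overline{p}_i}\overline{e}_k\overline{e}_i(\varphi))$ survives because $p_i$ is essentially $e_i(\varphi)$ (modulo frame terms bounded by $\|\varphi\|_{C^1}$ and $(M,\omega,J)$). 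Multiplying by $\varphi_{\overline{k}}$, taking real parts and summing over $k$ produces exactly the last line of \eqref{l-nabla-varphi}; the remaining pieces — $F_k\varphi_{\overline{k}}$, $F_r|\partial\varphi|_g^2$, and the commutator/torsion contributions involving $\chi_{i\overline{i}}$ and the Christoffel-type coefficients of the frame — are all bounded either by constants or by $C\sum_i G^{i\overline{i}}$ (using that each $G^{i\overline{i}}\ge G^{n\overline{n}}$ up to the lower bound \eqref{lower bound of sum Fii}, so a bounded quantity is $\le C\sum_i G^{i\overline{i}}$ after absorbing). Finally I would use $h$-independent bookkeeping: keep a factor $\tfrac12$ in front of the good quadratic term (discarding the other half into the estimate is wasteful here, so in fact the $\tfrac12$ simply reflects that we only need a definite fraction later), and collect all negative contributions into the single $-C\sum_i G^{i\overline{i}}$.

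The main obstacle I anticipate is bookkeeping the almost-complex commutator terms: because $J$ is not integrable, $[e_i,\overline{e}_k]^{(0,1)}\ne 0$ and $\overline{e}_i e_k(\varphi)\ne e_k\overline{e}_i(\varphi)$ in general, so each reordering of $e_i,\overline{e}_i,e_k$ generates torsion terms, some of which are again linear in the unbounded second derivatives $\varphi_{i\overline{i}}=\eta_i-\chi_{i\overline{i}}$. One has to check that every such term is either (i) of the form $G^{i\overline{i}}\cdot(\text{bounded})$, hence absorbed into $-C\sum_i G^{i\overline{i}}$, or (ii) of the form $G^{i\overline{i}}\eta_i\cdot(\text{bounded})=(\text{bounded})\cdot(\text{bounded in }i)$, using $\sum_i G^{i\overline{i}}\eta_i=\sum_i\sigma_1(i)\eta_i/\sigma_2=2$, or (iii) genuinely quadratic in third derivatives and of the same sign as the good term. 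Keeping track of which slot of $|e_ie_k(\varphi)|^2$ versus $|e_i\overline{e}_k(\varphi)|^2$ each torsion term feeds into — and verifying the Cauchy–Schwarz absorptions do not eat more than half the good term — is the delicate part; everything else is a routine, if lengthy, computation.
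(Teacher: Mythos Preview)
Your proposal is correct and follows essentially the same route as the paper: expand $L(|\partial\varphi|_g^2)$ by Leibniz, differentiate the equation once along $e_k$ (the paper uses $\overline e_k$, equivalent by conjugation) to convert the third-order piece into the $F_{p_i}$, $F_{\overline p_i}$ contributions, and absorb the commutator/torsion errors by Cauchy--Schwarz into $-C\sum_i G^{i\overline i}$. Two small clarifications: writing ``$e_k(G^{i\overline i}\varphi_{i\overline i})$'' is misleading, since the $G^{i\overline i}$ are not themselves differentiated---they appear as chain-rule coefficients in $e_k(\log\sigma_2)=G^{i\overline j}e_k(\tilde g_{i\overline j})$; and the factor $\tfrac12$ is not a free choice but exactly what survives after Cauchy--Schwarz absorbs the commutator terms linear in $|e_ie_k(\varphi)|+|e_i\overline e_k(\varphi)|$ (the paper spends $\tfrac14+\tfrac14$ of the good quadratic on the two conjugate pieces).
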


\begin{proof} By (\ref{General 2-nd Hessian equation}), we have
\begin{equation}\label{L partial varphi equation 1}
\log\sigma_{2}(\tilde{\omega}) =\log\left(
\begin{matrix}
n\\
2
\end{matrix}
\right)+F(z,\partial\varphi,\varphi),
\end{equation}
where $\tilde{\omega}=\chi+\ddbar\vp$. For any vector field $W$, differentiating (\ref{L partial varphi equation 1}) along $W$  at $x_{0}$,  we get
\begin{equation}
G^{i\overline{j}}W(\tilde{g}_{i\overline{j}})=W(F),
\end{equation}
which implies
\begin{equation}\label{differentiating once to equation}
\begin{split}
& \sum_{k}G^{i\overline{i}}(W e_{i}\overline{e}_{i}(\varphi)-W[e_{i},\overline{e}_{i}]^{(0,1)}(\varphi))\\
&= -G^{i\overline{i}}W(\chi_{i\overline{i}})+F_{W}+
    F_{r}W(\varphi)+F_{p_{i}}W{e}_{i}(\varphi)+F_{\overline{p}_{i}}W\overline{e}_{i}(\varphi).\\[2mm]
\end{split}
\end{equation}
By choosing $W=\ol{e}_{k}$, it follows
\begin{equation}\label{L partial varphi equation 2}
\begin{split}
& \sum_{k}G^{i\overline{i}}(\vp_{k}\overline{e}_{k}e_{i}\overline{e}_{i}(\varphi)-\vp_{k}\overline{e}_{k}[e_{i},\overline{e}_{i}]^{(0,1)}(\varphi))\\
& \geq   2\sum_{i,k}\left(\vp_{k}F_{p_{i}}\overline{e}_k{e}_{i}(\varphi)+\vp_{k}F_{\overline{p}_{i}}\overline{e}_k\overline{e}_{i}(\varphi)\right)
       -C\sum_{i}G^{i\overline{i}}.
\end{split}
\end{equation}

On the other hand,
\begin{equation}\label{L partial varphi equation 3}
\begin{split}
L(|\partial\vp|_{g}^{2})
 &= \sum_{k}G^{i\overline{i}}\left(e_{i}\overline{e}_{i}(\vp_{k}\vp_{\overline{k}})
                             -[e_{i},\overline{e}_{i}]^{(0,1)}(\vp_{k}\vp_{\overline{k}})\right)\\
& = \sum_{k}G^{i\overline{i}}(|e_{i}e_{k}(\vp)|^{2}+|e_{i}\overline{e}_{k}(\vp)|^{2})\\
&+\sum_{k}G^{i\overline{i}}\left(\vp_{k}e_{i}\overline{e}_{i}\overline{e}_{k}(\vp)
    -\vp_{k}[e_{i},\overline{e}_{i}]^{(0,1)}\overline{e}_{k}(\vp)\right)\\
&   +\sum_{k}G^{i\overline{i}}\left(\vp_{\overline{k}}e_{i}\overline{e}_{i}e_{k}(\vp)-\vp_{\overline{k}}[e_{i},\overline{e}_{i}]^{(0,1)}e_{k}(\vp)\right).
\end{split}
\end{equation}
Note
\begin{equation*}
\begin{split}
& \sum_{k}G^{i\overline{i}}\left(\vp_{k}e_{i}\overline{e}_{i}\overline{e}_{k}(\vp)-\vp_{k}[e_{i},\overline{e}_{i}]^{(0,1)}\overline{e}_{k}(\vp)\right)\\
&\geq \sum_{k}G^{i\overline{i}}\left(\vp_{k}\overline{e}_{k}e_{i}\overline{e}_{i}(\vp)-\vp_{k}\overline{e}_{k}[e_{i},\overline{e}_{i}]^{(0,1)}(\vp)\right)
       -C\sum_{i}G^{i\overline{i}}\\
& -C\sum_{k}G^{i\overline{i}}(|e_{i}e_{k}(\varphi)|+|e_{i}\overline{e}_{k}(\varphi)|).
    \end{split}
 \end{equation*}
By (\ref{L partial varphi equation 2}) and the Cauchy-Schwarz inequality,     it follows
  \begin{equation*}
\begin{split}
& ~\sum_{k}G^{i\overline{i}}\left(\vp_{k}e_{i}\overline{e}_{i}\overline{e}_{k}(\vp)-\vp_{k}[e_{i},\overline{e}_{i}]^{(0,1)}\overline{e}_{k}(\vp)\right)\\
&\geq ~ 2\sum_{i,k}\left(\vp_{k}F_{p_{i}}\overline{e}_k{e}_{i}(\varphi)+\vp_{k}F_{\overline{p}_{i}}\overline{e}_k\overline{e}_{i}(\varphi)\right)
       -C\sum_{i}G^{i\overline{i}}\\
& -\frac{1}{4}\sum_{k}G^{i\overline{i}}(|e_{i}e_{k}(\varphi)|^{2}+|e_{i}\overline{e}_{k}(\varphi)|^{2}).
\end{split}
\end{equation*}
Similarly,
\begin{equation*}
\begin{split}
& ~\sum_{k}G^{i\overline{i}}\left(\vp_{\overline{k}}e_{i}\overline{e}_{i}e_{k}(\vp)-\vp_{\overline{k}}[e_{i},\overline{e}_{i}]^{(0,1)}e_{k}(\vp)\right)\\
&\geq ~ 2\sum_{i,k}\left(\vp_{\overline{k}}F_{p_{i}}e_{k}{e}_{i}(\varphi)
       +\vp_{\overline{k}}F_{\overline{p}_{i}}e_{k}\overline{e}_{i}(\varphi)\right)-C\sum_{i}G^{i\overline{i}}\\
& -\frac{1}{4}\sum_{k}G^{i\overline{i}}(|e_{i}e_{k}(\varphi)|^{2}+|e_{i}\overline{e}_{k}(\varphi)|^{2}).
\end{split}
\end{equation*}
Substituting the above two inequalities into (\ref{L partial varphi equation 3}), we get (\ref{l-nabla-varphi}) immediately.
\end{proof}

Next, we compute  $L(\lambda_{1})$.

\begin{lemma}\label{the differential of eigenvalue}
\begin{equation*}
\begin{split}
L(\lambda_{1}) & \geq  2\sum_{\alpha>1}\frac{G^{i\overline{i}}|e_{i}(\varphi_{V_{\alpha}V_{1}})|^{2}}{(\lambda_{1}-\lambda_{\alpha})}
                       -{G^{i\overline{j},k\overline{l}}V_{1}(\tilde{g}_{i\overline{j}})V_{1}(\tilde{g}_{k\overline{l}})}\\
               & \quad -2G^{i\overline{i}}[V_{1},e_{i}]V_{1}\overline{e}_{i}(\varphi)-
                       2G^{i\overline{i}}[V_{1},\overline{e}_{i}]V_{1}{e}_{i}(\varphi)\\[2mm]
               & \quad -C\lambda_{1}\sum_{i}G^{i\overline{i}}-C\lambda_{1}^{2}+F_{p_{i}}V_{1}V_{1}e_{i}(\varphi)
                       +F_{\overline{p}_{i}}V_{1}V_{1}\overline{e}_{i}(\varphi).
\end{split}
\end{equation*}
\end{lemma}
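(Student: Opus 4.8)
The plan is to compute $L(\lambda_1) = G^{i\overline{i}}(\lambda_1)_{i\overline{i}}$ directly by applying the chain rule through $\lambda_1 = \lambda_1(\Phi)$, using the derivative formulas for $\lambda_1$ from Lemma \ref{First and second derivatives of lambda1} together with a careful bookkeeping of the commutators that arise because $\{e_i\}$ is a complex frame, $\{V_\alpha\}$ is a real frame, and neither is parallel. First I would write $\lambda_1 = \lambda_1^{\alpha\beta}\Phi^\alpha_\beta$ at $x_0$, so that differentiating once gives $e_i(\lambda_1) = \lambda_1^{\alpha\beta} e_i(\Phi^\alpha_\beta) = V_1^\alpha V_1^\beta e_i(\Phi^\alpha_\beta)$, and differentiating a second time produces the Hessian term $\lambda_1^{\alpha\beta}\,\overline{e}_i e_i(\Phi^\alpha_\beta)$ together with the quadratic term $\lambda_1^{\alpha\beta,\gamma\delta}\, e_i(\Phi^\alpha_\beta)\,\overline{e}_i(\Phi^\gamma_\delta)$, which by Lemma \ref{First and second derivatives of lambda1} is exactly the nonnegative contribution $2\sum_{\alpha>1}\frac{|e_i(\varphi_{V_\alpha V_1})|^2}{\lambda_1 - \lambda_\alpha}$ after contracting with $G^{i\overline{i}}$ (here I am using that $\Phi$ differs from the Hessian of $\varphi$ only by the bounded term $g^{\alpha\gamma}B_{\gamma\beta}$, whose derivatives contribute only $O(\lambda_1)$ error controlled by $C\lambda_1\sum_i G^{i\overline{i}}$ and $C\lambda_1^2$).

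The heart of the computation is the term $\lambda_1^{\alpha\beta}\, G^{i\overline{i}}\,\overline{e}_i e_i(\Phi^\alpha_\beta)$, which after ignoring bounded pieces is $G^{i\overline{i}} V_1 V_1 (\overline{e}_i e_i (\varphi))$ up to the commutators needed to move the $\overline{e}_i e_i$ past the $V_1 V_1$. I would commute $\overline{e}_i e_i$ through $V_1 V_1$, generating the genuine third-order ``good'' object $V_1 V_1 (\tilde g_{i\overline{i}})$ plus commutator terms of the schematic form $[V_1, e_i] V_1 \overline{e}_i(\varphi)$ and $[V_1, \overline{e}_i] V_1 e_i(\varphi)$ — these are precisely the two commutator lines in the statement — plus lower-order commutator-of-commutator terms that are $O(\lambda_1 \sum_i G^{i\overline{i}})$. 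To replace $G^{i\overline{i}} V_1 V_1(\tilde g_{i\overline{i}})$ I would differentiate the equation \eqref{L partial varphi equation 1} twice in the direction $V_1$: differentiating once gives $G^{i\overline{i}} V_1(\tilde g_{i\overline{i}}) = V_1(F)$, and differentiating again gives $G^{i\overline{i}} V_1 V_1(\tilde g_{i\overline{i}}) + G^{i\overline{j},k\overline{l}} V_1(\tilde g_{i\overline{j}}) V_1(\tilde g_{k\overline{l}}) = V_1 V_1(F)$. Expanding $V_1 V_1(F)$ by the chain rule through $F(z,\partial\varphi,\varphi)$ produces the terms $F_{p_i} V_1 V_1 e_i(\varphi) + F_{\overline p_i} V_1 V_1 \overline e_i(\varphi)$ kept in the statement (these are the only third-order terms in $F$), while $F_{p_i p_j}$-type terms, $F_r$-terms, $F_z$-terms and first-derivative-of-$\varphi$ terms are all bounded by $C\lambda_1$ or $C$ — here using $|\partial\varphi| \le C$ — hence absorbed into $C\lambda_1 \sum_i G^{i\overline{i}} + C\lambda_1^2$.

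I expect the main obstacle to be the careful handling of the commutator terms and the error terms arising because $J$ is not integrable and the frames are not parallel at $x_0$: one must check that every term produced when passing derivatives past each other, and every term from the difference between $\Phi$ and $\nabla^2\varphi$, is either one of the four ``named'' terms kept in the statement (the positive third-order term, the $-G^{i\overline j,k\overline l}V_1(\tilde g)V_1(\tilde g)$ term, the two $[V_1,e_i]$/$[V_1,\overline e_i]$ commutator terms, and the two $F_{p_i}$/$F_{\overline p_i}$ terms) or else is dominated by $C\lambda_1\sum_i G^{i\overline i} + C\lambda_1^2$. Since the torsion and curvature of $(M,\omega,J)$ are uniformly bounded and $\lambda_1 = \lambda_1(\Phi) \le C|\nabla^2\varphi|_g$, every such stray term carries at least one factor of $\lambda_1$ (from a second derivative of $\varphi$) times either a bounded structure coefficient or a factor of $G^{i\overline i}$, so the bookkeeping closes. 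Assembling all of this and contracting with $G^{i\overline i}$ yields exactly the claimed lower bound for $L(\lambda_1)$.
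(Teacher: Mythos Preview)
Your proposal is correct and follows essentially the same approach as the paper: apply Lemma~\ref{First and second derivatives of lambda1} to expand $L(\lambda_1)$, commute $e_i\overline{e}_i$ past $V_1V_1$ to produce $G^{i\overline{i}}V_1V_1(\tilde g_{i\overline{i}})$ plus the two displayed Lie-bracket terms, and then differentiate the equation twice along $V_1$ to replace $G^{i\overline{i}}V_1V_1(\tilde g_{i\overline{i}})$ by $-G^{i\overline{j},k\overline{l}}V_1(\tilde g_{i\overline{j}})V_1(\tilde g_{k\overline{l}})+V_1V_1(F)$, with the $F_{p_ip_j}$-type cross terms in $V_1V_1(F)$ (which are $O(\lambda_1^2)$, not $O(\lambda_1)$) supplying the $-C\lambda_1^2$ error. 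The only point the paper makes explicit that you leave implicit is that the $(\nabla_{V_1}V_1)(\varphi)$ piece of $\varphi_{V_1V_1}$, once $e_i\overline{e}_i$ is applied, is controlled via the \emph{once}-differentiated equation \eqref{differentiating once to equation}, giving the bound $|G^{i\overline{i}}e_i\overline{e}_i(\nabla_{V_1}V_1)(\varphi)|\le C\lambda_1\sum_i G^{i\overline{i}}+C\lambda_1$.
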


\begin{proof}
The proof  is similar to one of \cite[Lemma 5.3]{CTW16}.  In fact, by
Lemma \ref{First and second derivatives of lambda1}, we have
\begin{equation}\label{the differential of eigenvalue equation 1}
\begin{split}
&L(\lambda_{1}) \\& =
G^{i\overline{i}}\lambda_{1}^{\alpha\beta,\gamma\delta}e_{i}(\Phi^{\gamma}_{\delta})
\overline{e}_{i}(\Phi^{\alpha}_{\beta})+G^{i\overline{i}}\lambda_{1}^{\alpha\beta}e_{i}\overline{e}_{i}
(\Phi_{\beta}^{\alpha})-G^{i\overline{i}}\lambda_{1}^{\alpha\beta}[e_{i},\overline{e}_{i}]^{(0,1)}(\Phi_{\beta}^{\alpha})\\
& = G^{i\overline{i}}\lambda_{1}^{\alpha\beta,\gamma\delta}e_{i}(\varphi_{\alpha\beta})
\overline{e}_{i}(\varphi_{\alpha\beta})
+G^{i\overline{i}}\lambda_{1}^{\alpha\beta}e_{i}\overline{e}_{i}(\varphi_{\alpha\beta})
+G^{i\overline{i}}\lambda_{1}^{\alpha\beta}\varphi_{\gamma\beta}e_{i}\overline{e}_{i}(g^{\alpha\gamma})\\
& \quad -G^{i\overline{i}}\lambda_{1}^{\alpha\beta}B_{\gamma\beta}e_{i}\overline{e}_{i}(g^{\alpha\gamma})
-G^{i\overline{i}}\lambda_{1}^{\alpha\beta}[e_{i},\overline{e}_{i}]^{(0,1)}(\varphi_{\alpha\beta})\\
& \geq
2\sum_{\alpha>1}G^{i\overline{i}}\frac{|e_{i}(\varphi_{V_{\alpha}V_{1}})|^{2}}{\lambda_{1}-\lambda_{\alpha}}
 -C\lambda_{1}\sum_{i}G^{i\overline{i}}\\
&+G^{i\overline{i}}e_{i}\overline{e}_{i}(\varphi_{V_{1}V_{1}})
-G^{i\overline{i}}[e_{i},\overline{e}_{i}]^{(0,1)}(\varphi_{V_{1}V_{1}}).
\end{split}
\end{equation}
We need to deal with last two terms in (\ref{the differential of eigenvalue equation 1}). Note $|e_{i}\overline{e}_{i}(\nabla_{V_1}V_1)(\vp)-(\nabla_{V_1}V_1)e_{i}\overline{e}_{i}(\vp)|\leq C\lambda_{1}$.
 Then by  \eqref{differentiating once to equation}, we have
\begin{equation*}
\left|G^{i\overline{i}}e_{i}\overline{e}_{i}(\nabla_{V_1}V_1)(\vp)\right|\leq C\lambda_{1} \sum_{i} G^{i\overline{i}}+C\lambda_{1}.
\end{equation*}
It follows
\begin{equation*}
\begin{split}
&  G^{i\overline{i}}e_{i}\overline{e}_{i}(\vp_{V_{1}V_{1}})-G^{i\overline{i}}[e_{i},\overline{e}_{i}]^{(0,1)}(\vp_{V_{1}V_{1}})\\
& =     G^{i\overline{i}}e_{i}\overline{e}_{i}V_{1}V_{1}(\vp)-G^{i\overline{i}}e_{i}\overline{e}_{i}(\nabla_{V_{1}}V_{1})(\vp)
          -G^{i\overline{i}}[e_{i},\overline{e}_{i}]^{(0,1)}V_{1}V_{1}(\vp)\\
       &  +G^{i\overline{i}}[e_{i},\overline{e}_{i}]^{(0,1)}(\nabla_{V_{1}}V_{1})(\vp)\\
& \geq   G^{i\overline{i}}e_{i}\overline{e}_{i}V_{1}V_{1}(\vp)-G^{i\overline{i}}[e_{i},\overline{e}_{i}]^{(0,1)}V_{1}V_{1}(\vp)
          -C\lambda_{1}\sum_{i}G^{i\overline{i}}-C\lambda_{1}.
\end{split}
\end{equation*}
By using  the Lie bracket for vector fields, we further get
\begin{equation*}
\begin{split}
& G^{i\overline{i}}e_{i}\overline{e}_{i}V_{1}V_{1}(\varphi)-G^{i\overline{i}}[e_{i},\overline{e}_{i}]^{(0,1)}V_{1}V_{1}(\varphi) \\
 \geq ~~& G^{i\overline{i}}\left(V_{1}e_{i}\overline{e}_{i}V_{1}(\vp)+[e_{i},V_{1}]\overline{e}_{i}V_{1}(\vp)
          -[V_{1},\overline{e}_{i}]e_{i}V_{1}(\vp)-V_{1}V_{1}[e_{i},\overline{e}_{i}]^{(0,1)}(\vp)\right)\\
        & -C\lambda_{1}\sum_{i}G^{i\overline{i}}\\
 \geq ~~&G^{i\overline{i}}V_{1}V_{1}\left(e_{i}\overline{e}_{i}(\varphi)-[e_{i},\overline{e}_{i}]^{(0,1)}(\varphi)\right)
          -2G^{i\overline{i}}[V_{1},e_{i}]V_{1}\overline{e}_{i}(\vp)\\
        & -2G^{i\overline{i}}[V_{1},\overline{e}_{i}]V_{1}e_{i}(\vp)-C\lambda_{1}\sum_{i}G^{i\overline{i}}.
\end{split}
\end{equation*}
Thus
\begin{equation}\label{the differential of eigenvalue equation 4}
\begin{split}
&G^{i\overline{i}}e_{i}\overline{e}_{i}(\varphi_{V_{1}V_{1}})-G^{i\overline{i}}[e_{i},\overline{e}_{i}]^{(0,1)}(\varphi_{V_{1}V_{1}})\\
 \geq~~& G^{i\overline{i}}V_{1}V_{1}(\tilde{g}_{i\overline{i}})-2G^{i\overline{i}}[V_{1},e_{i}]V_{1}\overline{e}_{i}(\vp)
-2G^{i\overline{i}}[V_{1},\overline{e}_{i}]V_{1}e_{i}(\vp)\\
&-C\lambda_{1}\sum_{i}G^{i\overline{i}}-C\lambda_{1}.
\end{split}
\end{equation}

On the other hand, differentiating (\ref{L partial varphi equation 1}) along $V_1$  twice  at $x_{0}$,
we have
\begin{equation*}
G^{i\overline{i}}V_{1}V_{1}(\tilde{g}_{i\overline{i}})+G^{i\overline{j},k\overline{l}}V_{1}(\tilde{g}_{i\overline{j}})V_{1}(\tilde{g}_{k\overline{l}})=F_{p_{i}}V_{1}V_{1}e_{i}(\varphi)
+F_{\overline{p}_{i}}V_{1}V_{1}\overline{e}_{i}(\varphi)+E,
\end{equation*}
where $E$ denotes a term satisfying $|E|\leq C\lambda_{1}^{2}$ for a uniform constant $C$.  Thus by  (\ref{the differential of eigenvalue equation 4}), we get
\begin{equation*}
\begin{split}
& G^{i\overline{i}}e_{i}\overline{e}_{i}(\varphi_{V_{1}V_{1}})-G^{i\overline{i}}[e_{i},\overline{e}_{i}]^{(0,1)}(\varphi_{V_{1}V_{1}})\\
\geq ~& -{G^{i\overline{j},k\overline{l}}V_{1}(\tilde{g}_{i\overline{j}})V_{1}(\tilde{g}_{k\overline{l}})}
        -2G^{i\overline{i}}[V_{1},e_{i}]V_{1}\overline{e}_{i}(\varphi)-2G^{i\overline{i}}[V_{1},\overline{e}_{i}]V_{1}{e}_{i}(\varphi)\\
     & -C\lambda_{1}\sum_{i}G^{i\overline{i}}-C\lambda_{1}^{2}+F_{p_{i}}V_{1}V_{1}e_{i}(\varphi)+F_{\overline{p}_{i}}V_{1}V_{1}\overline{e}_{i}(\varphi).
\end{split}
\end{equation*}
Substituting  the above inequality into (\ref{the differential of eigenvalue equation 1}), we prove the lemma.
\end{proof}

By Lemma \ref{L partial varphi} and Lemma \ref{the differential of eigenvalue}, we get
\begin{equation}\label{LQ Lemma equation 4}
\begin{split}
&L(\log\lambda_{1}(\Phi)+h(|\partial\vp|_{g}^{2}))\\
 &= \frac{L(\lambda_{1})}{\lambda_{1}}+h'L(|\partial \varphi|^{2}_{g})
 -\frac{G^{i\overline{i}}|e_{i}(\vp_{V_{1}V_{1}})|^{2}}{\lambda_{1}^{2}}+
         h''G^{i\overline{i}}|e_{i}|\partial\varphi|^{2}_{g}|^{2} \\
&\ge   2\sum_{\alpha>1}\frac{G^{i\overline{i}}|e_{i}(\varphi_{V_{\alpha}V_{1}})|^{2}}{\lambda_{1}(\lambda_{1}-\lambda_{\alpha})}
-\frac{G^{i\ol{j},k\ol{l}}V_{1}(\tilde{g}_{i\ol{j}})V_{1}(\tilde{g}_{k\ol{l}})}{\lambda_{1}}\\
  &+\frac{h'}{2}\sum_{k}G^{i\overline{i}}(|e_{i}e_{k}(\varphi)|^{2}+|e_{i}\overline{e}_{k}(\varphi)|^{2})+
         h''G^{i\overline{i}}|e_{i}|\partial\varphi|^{2}_{g}|^{2}\\
  &-\frac{G^{i\overline{i}}|e_{i}(\vp_{V_{1}V_{1}})|^{2}}{\lambda_{1}^{2}}-C\sum_{i}G^{i\overline{i}}\\
& -\frac{2G^{i\overline{i}}[V_{1},e_{i}]V_{1}\overline{e}_{i}(\varphi)+  2G^{i\overline{i}}[V_{1},\overline{e}_{i}]V_{1}{e}_{i}(\varphi)}{\lambda_{1}}\\
&+ [F_{p_{i}}(\frac{ V_{1}V_{1}e_{i}(\varphi)}{\lambda_{1}}+h'(\vp_{\overline{k}}
 e_{k}e_{i}(\varphi)+\vp_{k}\overline{e}_{k}e_{i}(\vp))]\\
&  +[F_{\overline{p}_{i}} (\frac{V_{1}V_{1}\overline{e}_{i}(\varphi)}{\lambda_{1}}+h'(\vp_{\overline{k}} e_{k}\overline{e}_{i}(\varphi)+\vp_{k}\overline{e}_{k}\overline{e}_{i}(\vp))]\\
&  -C\lambda_{1}.
\end{split}
\end{equation}
We need to deal with last fourth terms in (\ref{LQ Lemma equation 4}) where   three  parts are about the 3th-derivative of $\varphi$ and one is an eigenvalue function. The term
$$-\frac{2G^{i\overline{i}}[V_{1},e_{i}]V_{1}\overline{e}_{i}(\varphi)+  2G^{i\overline{i}}[V_{1},\overline{e}_{i}]V_{1}{e}_{i}(\varphi)}{\lambda_{1}}$$
 can be handled as
\begin{equation}\label{LQ Lemma equation 2}
\begin{split}
& \frac{2G^{i\overline{i}}[V_{1},e_{i}]V_{1}\overline{e}_{i}(\varphi)+  2G^{i\overline{i}}[V_{1},\overline{e}_{i}]V_{1}{e}_{i}(\varphi)}{\lambda_{1}}\\
\leq &~ \ve \frac{G^{i\overline{i}}|e_{i}(\varphi_{V_1V_{1}})|^{2}}{\lambda_{1}^{2}}+\ve\sum_{\alpha>1}\frac{G^{i\overline{i}}|e_{i}(\varphi_{V_\alpha V_{1}})|^{2}}
{\lambda_{1}(\lambda_{1}-\lambda_{\alpha})}+\frac{C}{\ve}\sum_{i}G^{i\overline{i}}.
\end{split}
\end{equation}
Here $\ve\in(0,\frac{1}{2}]$ is a constant to be determined later. We refer the reader to a similar argument in \cite[Lemma 5.4] {CTW16}.

To control the term $(\frac{ V_{1}V_{1}e_{i}(\varphi)}{\lambda_{1}}+h'(\vp_{\overline{k}}e_{k}e_{i}(\varphi)+\vp_{k}\overline{e}_{k}e_{i}(\vp))$ in (\ref{LQ Lemma equation 4}). We use the fact $d\hat Q(x_0)=0$.   In fact,
\begin{equation}\label{first-derivative}
\begin{split}
\frac{e_{i}(\varphi_{V_{1}V_{1}})}{\lambda_{1}}
&   =   Ae^{-A\vp}e_{i}(\varphi)-h'e_{i}(|\partial\varphi|_{g}^{2})\\
&   =   Ae^{-A\vp}e_{i}(\varphi)-h'\left(\vp_{\overline{k}} e_{i}e_{k}(\varphi)+\vp_{k}e_{i}\ol{e}_{k}(\vp)\right).
\end{split}
\end{equation}
Note
\begin{equation*}
\left|V_{1}V_{1}e_{i}(\vp) - e_{i}(\vp_{V_{1}V_{1}})\right|\leq C\lambda_{1}.
\end{equation*}
Thus
\begin{equation}\label{LQ Lemma vanish equation 1}
\begin{split}
&\left |   F_{p_{i}} \left(    \frac{V_{1}V_{1}e_{i}(\varphi)}{\lambda_{1}}+h'  (\vp_{\overline{k}}
  e_{k}e_{i}(\varphi)+\vp_{k}\overline{e}_{k}e_{i}(\vp)  )  \right) \right|\\
&\leq |F_{p_{i}}|\cdot\left|\frac{V_{1}V_{1}e_{i}(\varphi)}{\lambda_{1}}-\frac{e_{i}(\vp_{V_{1}V_{1}})}{\lambda_{1}}+Ae^{-A\vp}e_{i}(\vp)\right|\\
& \leq  CAe^{-A\vp}.
\end{split}
\end{equation}
Similarly, we have
\begin{equation}\label{LQ Lemma vanish equation 2}
 \left|  F_{\overline{p}_i} \left(  \frac{V_{1}V_{1}\overline{e}_{i}(\varphi)}{\lambda_{1}}+  h'  (\vp_{\overline{k}} e_{k}\overline{e}_{i}(\varphi)+\vp_{k}\overline{e}_{k}\overline{e}_{i}(\vp)  )  \right)\right |\leq CAe^{-A\varphi}.
\end{equation}

The following lemma gives a control to $\lambda_{1}$  for the solution $\varphi$ in (\ref{General 2-nd Hessian equation}).

\begin{lemma}\label{control C lambda1}
\begin{equation*}
C\lambda_{1} \leq \frac{h'}{4}\sum_{k}G^{i\overline{i}}(|e_{i}e_{k}(\varphi)|^{2}+|e_{i}\overline{e}_{k}(\varphi)|^{2})+C\sum_{i}G^{i\overline{i}}.
\end{equation*}
\end{lemma}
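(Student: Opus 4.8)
The plan is to bound $\lambda_1$ from above using the equation and the term $\sum_k G^{i\overline{i}}(|e_ie_k(\vp)|^2 + |e_i\overline{e}_k(\vp)|^2)$ that already appears on the right. The starting point is that $\vp$ solves $\sigma_2(\tilde\omega) = \binom{n}{2}e^{F}$ with $\tilde g_{i\overline i} = \eta_i$ in the chosen unitary frame, and that $\lambda_1 = \lambda_1(\Phi)$ is comparable, up to a bounded additive error depending on $\|\chi\|_{C^1}$ and $\|\vp\|_{C^1}$, to $\eta_1$ (since $(\chi+\ddbar\vp)\in\Gamma_2(M)\subset\Gamma_1(M)$ forces $\eta_1$ to be comparable to $\lambda_1(\nabla^2\vp)$, hence to $\lambda_1(\Phi)$). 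So it suffices to dominate $\eta_1$. First I would rewrite $\eta_1$ in terms of the complex Hessian $\vp_{i\overline i}$ of $\vp$ at $x_0$: $\eta_1 = \chi_{1\overline1} + \vp_{1\overline1} \le C + \vp_{1\overline1}$, and $\vp_{1\overline1} = e_1\overline e_1(\vp) - [e_1,\overline e_1]^{(0,1)}(\vp) \le |e_1\overline e_1(\vp)| + C$, where the last $C$ absorbs the first-order commutator term bounded by $\|\vp\|_{C^1}$ and the torsion. Taking $k = 1$ in the sum, $|e_1\overline e_1(\vp)|^2$ is one of the terms in $\sum_k G^{1\overline 1}(|e_1 e_k(\vp)|^2 + |e_1\overline e_k(\vp)|^2)$, weighted by $G^{1\overline1}$.

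The obstacle is that $G^{1\overline1} = \sigma_1(1)/\sigma_2$ may be small; it is exactly $\eta_1$ that is large, and by \eqref{the first order derivative of F}, $G^{1\overline1} = \sigma_1(\eta|1)/\sigma_2(\eta)$, which is of order $1/\eta_1$ when $\eta_1$ dominates. So naively $G^{1\overline1}|e_1\overline e_1(\vp)|^2 \approx \eta_1$, which is only enough to bound $\eta_1$ by a constant times itself — useless. The key structural fact to exploit is \eqref{Inequality 3}, $\eta_1\sigma_1(1) \ge \frac{2}{n}\sigma_2$, equivalently $G^{1\overline1}\eta_1 \ge \frac{2}{n}$, so $G^{1\overline1} \ge \frac{2}{n\eta_1}$; combined with \eqref{lower bound of sum Fii}, $\sum_i G^{i\overline i} \ge \frac{2(n-1)}{n}\sigma_2^{-1/2}$, and with the observation that $\sigma_2 \le C$ is \emph{not} available — rather $\sigma_2 = \binom{n}{2}e^F \le C$ \emph{is} available since $F \in C^0$. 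Hence $\sum_i G^{i\overline i} \ge c > 0$ uniformly. Now the strategy splits by the size of $\eta_1$ relative to $\lambda_1$: on one hand $\lambda_1 \le C\eta_1 + C$; on the other hand I want $h' G^{1\overline 1}|e_1\overline e_1(\vp)|^2 \ge h' G^{1\overline1}(\eta_1 - C)^2 \ge c\,h'\,\eta_1 \cdot \frac{(\eta_1-C)^2}{\eta_1}$ using $G^{1\overline1}\eta_1 \ge 2/n$ — wait, more carefully: $G^{1\overline1}(\eta_1-C)^2 = G^{1\overline1}\eta_1^2(1 - C/\eta_1)^2 \ge \frac{2}{n}\eta_1(1-C/\eta_1)^2 \ge c\,\eta_1 - C$. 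Since $h' \ge \frac{1}{2+2\sup|\partial\vp|^2} = c_0 > 0$ by \eqref{Properties of h}, this gives $\frac{h'}{4}\sum_k G^{i\overline i}(|e_ie_k(\vp)|^2 + |e_i\overline e_k(\vp)|^2) \ge \frac{h'}{4}G^{1\overline1}|e_1\overline e_1(\vp)|^2 \ge \frac{c_0}{4}(c\,\eta_1 - C) \ge c'\eta_1 - C \ge c''\lambda_1 - C$, which is exactly the claimed inequality after absorbing the additive constant into $C\sum_i G^{i\overline i}$ (legitimate since $\sum_i G^{i\overline i} \ge c > 0$).

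So the argument reduces to three ingredients, all already available: (i) $\lambda_1 \le C(\eta_1+1)$ from the $\Gamma_1$ condition and $|\nabla^2\vp| \le C\lambda_1(\nabla^2\vp)+C$; (ii) $\vp_{1\overline1} \ge \eta_1 - C$ and $|e_1\overline e_1(\vp)| \ge \vp_{1\overline1} - C \ge \eta_1 - C$, so $|e_1\overline e_1(\vp)|^2 \ge \eta_1^2 - C\eta_1 - C$; (iii) $G^{1\overline1} \ge \frac{2}{n\eta_1}$ from \eqref{Inequality 3}, together with $h' \ge c_0 > 0$ and $\sum_i G^{i\overline i} \ge c > 0$ (the latter because $\sigma_2 = \binom{n}{2}e^F$ is bounded above, via \eqref{lower bound of sum Fii}). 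The main delicate point is tracking that the discarded terms $|e_1 e_k(\vp)|$ and $|e_1\overline e_k(\vp)|$ for $k \ne 1$, and the lower-order commutator contributions, only produce bounded or favorably-signed errors; these are all controlled by $\|\vp\|_{C^1}$, $\|\chi\|_{C^1}$ and the geometry of $(M,\omega,J)$. I would also keep in mind the subtlety that $\eta_1$ could be comparable to $C$ (not large), in which case the inequality is trivial since $C\lambda_1 \le C$ and the right side is nonnegative; so without loss of generality one assumes $\eta_1 \ge 2C$ at the outset, which makes all the $(1 - C/\eta_1)$ factors bounded below by $\tfrac12$.
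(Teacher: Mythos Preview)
Your argument contains a genuine circularity. The claim that $\Gamma_1$-membership forces $\lambda_1\le C(\eta_1+1)$ is false: what $\tilde\omega\in\Gamma_1(M)$ gives is $\sum_i\eta_i>0$, hence (comparing with the real Laplacian at $x_0$) $\sum_\alpha\lambda_\alpha(\nabla^2\vp)\ge -C$, and therefore $|\nabla^2\vp|_g\le C\lambda_1+C$; in particular $\eta_1\le C\lambda_1+C$. But the \emph{reverse} inequality does not follow. The largest real eigenvalue $\lambda_1$ feels the ``pure'' second derivatives $e_ie_j(\vp)$, which are not controlled by the complex Hessian $(\vp_{i\ol j})$ alone. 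Indeed, the bound $\lambda_1\le C_A\eta_1+C$ is established in the paper only later, as \eqref{Lemma 2 equation 2} in Corollary~\ref{Lemma 2}, and that corollary is \emph{derived from} the present lemma through Proposition~\ref{LQ Lemma}. So invoking it here is circular, and your chain $G^{1\ol1}|e_1\ol e_1(\vp)|^2\ge c\eta_1-C\ge c'\lambda_1-C$ breaks at the last step.

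The paper avoids any comparison of $\lambda_1$ with $\eta_1$. It first proves the algebraic inequality $\sigma_1(1)\,\sigma_1\ge\sigma_2$, which yields
\[
\frac{1}{G^{1\ol1}}=\frac{\sigma_2}{\sigma_1(1)}\le\sigma_1=\frac{\sigma_2}{n-1}\sum_kG^{k\ol k}\le C\sum_kG^{k\ol k}
\]
(using $\sigma_2=\binom{n}{2}e^F\le C$). Then it applies the arithmetic--geometric inequality directly to $\lambda_1$:
\[
C\lambda_1\le \frac{h'}{4}G^{1\ol1}\lambda_1^2+\frac{C}{h'G^{1\ol1}}.
\]
The second term is absorbed by $C\sum_iG^{i\ol i}$ via the display above and $h'\ge c_0$. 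For the first term one uses that $\lambda_1^2\le|\nabla^2\vp|_g^2+C$ is bounded by the \emph{full} sum $\sum_{i,k}(|e_ie_k(\vp)|^2+|e_i\ol e_k(\vp)|^2)$ (not merely the single entry $|e_1\ol e_1(\vp)|^2$), together with $G^{1\ol1}\le G^{i\ol i}$ for every $i$. This is precisely where your restriction to the one term $G^{1\ol1}|e_1\ol e_1(\vp)|^2$ loses the information needed: that term sees only $\eta_1$, whereas the full sum sees $\lambda_1$.
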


\begin{proof}
At $x_{0}$, by (\ref{the first order derivative of F}), we have
\begin{equation*}
G^{i\overline{j}}=\frac{\sigma_{1}(i)}{\sigma_{2}}\delta_{ij},
\end{equation*}
where $\eta_{i}=\tilde{g}_{i\overline{i}}$ and $\sigma_{1}(i)=\sum_{k\neq i}\eta_{k}$. It is clear that
\begin{equation*}
\begin{split}
\sigma_{1}(1)\sigma_{1}(\eta) & = (\sigma_{1}(1))^{2}+\eta_{1}\sigma_{1}(1)\\
& =\sum_{i\geq2}\eta_{i}^{2}+2\sum_{i>j\geq2}\eta_{i}\eta_{j}+\sum_{i\geq2}\eta_{1}\eta_{i}\\
& =\sum_{i\geq2}\eta_{i}^{2}+\sum_{i>j\geq2}\eta_{i}\eta_{j}+\sigma_{2}\\
& \geq \sigma_{2},
\end{split}
\end{equation*}
which implies
\begin{equation*}
\frac{1}{G^{i\overline{i}}}\leq\frac{1}{G^{1\overline{1}}}=\frac{\sigma_{2}}{\sigma_{1}(1)}\leq \sigma_{1}=\frac{\sigma_{2}}{n-1}\sum_{k}G^{k\overline{k}}\leq C\sum_{k}G^{k\overline{k}},~i=1,2,\cdots,n.
\end{equation*}
Combining this with the Cauchy-Schwarz inequality and (\ref{Properties of h}), we have
\begin{equation*}
\begin{split}
C\lambda_{1} & \leq \frac{h'}{4}G^{1\overline{1}}\lambda_{1}^{2}+\frac{C}{h'G^{1\overline{1}}}\\
& \leq \frac{h'}{4}\sum_{k}G^{i\overline{i}}(|e_{i}e_{k}(\varphi)|^{2}+|e_{i}\overline{e}_{k}(\varphi)|^{2})+C\sum_{i}G^{i\overline{i}},
\end{split}
\end{equation*}
as required.
\end{proof}

Substituting the above relations  into (\ref{LQ Lemma equation 4}), we get the main estimate in this section.

\begin{proposition}\label{LQ Lemma}
Let $\vp$ be the solution of (\ref{General 2-nd Hessian equation}). Then at $x_{0}$, there exists a uniform constant $C$ such that for any $\ve \in (0, \frac{1}{2}]$, it holds
\begin{equation}\label{main-estimate-1}
\begin{split}
0 & \geq(2-\ve)\sum_{\alpha>1}\frac{G^{i\overline{i}}|e_{i}(\varphi_{V_{\alpha}V_{1}})|^{2}}{\lambda_{1}(\lambda_{1}-\lambda_{\alpha})}
-\frac{G^{i\ol{j},k\ol{l}}V_{1}(\tilde{g}_{i\ol{j}})V_{1}(\tilde{g}_{k\ol{l}})}{\lambda_{1}}\\
&-(1+\ve)\frac{G^{i\overline{i}}|{e_{i}(\varphi_{V_{1}V_{1}})}|^{2}}{\lambda_{1}^{2}}\\
          &
             +\frac{h'}{4}\sum_{k}G^{i\overline{i}}(|e_{i}e_{k}(\varphi)|^{2}+|e_{i}\overline {e}_{k}(\varphi)|^{2})+h''G^{i\overline{i}}|e_{i}(|\partial\varphi|_{g}^{2})|^{2}\\
          & +\left(\ve_{0} Ae^{-A\varphi} -\frac{C}{\ve}\right)\sum_{i}G^{i\overline{i}}
+A^{2}e^{-A\varphi}G^{i\overline{i}}|\varphi_{i}|^{2}-CA e^{-A\varphi}.
\end{split}
\end{equation}
\end{proposition}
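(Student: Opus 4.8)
The plan is to assemble Proposition \ref{LQ Lemma} by feeding the auxiliary inequalities \eqref{LQ Lemma equation 2}, \eqref{LQ Lemma vanish equation 1}, \eqref{LQ Lemma vanish equation 2} and Lemma \ref{control C lambda1} into the lower bound \eqref{LQ Lemma equation 4} for $L(\log\lambda_1(\Phi)+h(|\partial\vp|_g^2))$, and then to exploit the maximum principle. Since $\hat Q$ attains its maximum at $x_0$, we have $L(\hat Q)\le 0$ at $x_0$; recalling $\hat Q=\log\lambda_1(\Phi)+h(|\partial\vp|_g^2)+e^{-A\vp}$, this reads
\begin{equation*}
0\geq L(\log\lambda_1(\Phi)+h(|\partial\vp|_g^2))+L(e^{-A\vp}).
\end{equation*}
The term $L(e^{-A\vp})$ is computed directly: $L(e^{-A\vp})=-Ae^{-A\vp}L(\vp)+A^2e^{-A\vp}G^{i\ol i}|e_i(\vp)|^2$. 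Here $L(\vp)=G^{i\ol j}\vp_{i\ol j}=G^{i\ol i}(\tilde g_{i\ol i}-\chi_{i\ol i})=\sum_i\eta_i G^{i\ol i}-G^{i\ol i}\chi_{i\ol i}$, and using $\sum_i\eta_iG^{i\ol i}=\sum_i\eta_i\sigma_1(i)/\sigma_2=2$ (the Euler identity for $\sigma_2$) together with $\chi\geq\ve_0\omega$, one gets $-Ae^{-A\vp}L(\vp)\geq \ve_0 Ae^{-A\vp}\sum_iG^{i\ol i}-CAe^{-A\vp}$. This produces the two good terms $\ve_0Ae^{-A\vp}\sum_iG^{i\ol i}$ and $A^2e^{-A\vp}G^{i\ol i}|\vp_i|^2$ and the harmless $-CAe^{-A\vp}$ appearing in \eqref{main-estimate-1}.

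Next I would combine the pieces term by term. The third-order eigenvector terms in \eqref{LQ Lemma equation 4}, namely $-\tfrac{2}{\lambda_1}(G^{i\ol i}[V_1,e_i]V_1\ol e_i(\vp)+G^{i\ol i}[V_1,\ol e_i]V_1 e_i(\vp))$, are absorbed via \eqref{LQ Lemma equation 2}: this costs $\ve\,G^{i\ol i}|e_i(\vp_{V_1V_1})|^2/\lambda_1^2$ (which is why the coefficient of that term in \eqref{main-estimate-1} degrades from $1$ to $1+\ve$), costs $\ve\sum_{\alpha>1}G^{i\ol i}|e_i(\vp_{V_\alpha V_1})|^2/(\lambda_1(\lambda_1-\lambda_\alpha))$ (degrading the $2$ to $2-\ve$), and contributes $+\tfrac{C}{\ve}\sum_iG^{i\ol i}$, which is why the good coefficient of $\sum_iG^{i\ol i}$ is $\ve_0Ae^{-A\vp}-\tfrac{C}{\ve}$ rather than $\ve_0Ae^{-A\vp}$. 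The terms involving $F_{p_i}$ and $F_{\ol p_i}$ together with the $h'$-factors in the last two lines of \eqref{LQ Lemma equation 4} are bounded by $CAe^{-A\vp}$ using \eqref{LQ Lemma vanish equation 1} and \eqref{LQ Lemma vanish equation 2}; these are absorbed into the $-CAe^{-A\vp}$ term (possibly enlarging $C$). Finally the leftover $-C\lambda_1$ at the bottom of \eqref{LQ Lemma equation 4} is dominated by Lemma \ref{control C lambda1}: $C\lambda_1\leq\tfrac{h'}{4}\sum_kG^{i\ol i}(|e_ie_k(\vp)|^2+|e_i\ol e_k(\vp)|^2)+C\sum_iG^{i\ol i}$, which is why the coefficient of the second-order square term drops from $\tfrac{h'}{2}$ (in \eqref{LQ Lemma equation 4}) to $\tfrac{h'}{4}$ (in \eqref{main-estimate-1}), with the residual $C\sum_iG^{i\ol i}$ again swept into the $\tfrac{C}{\ve}\sum_iG^{i\ol i}$ term.

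The one point demanding care — and the only genuine computation — is the identity $\sum_i\eta_iG^{i\ol i}=2$, i.e. $\sum_i\eta_i\sigma_1(\eta|i)=2\sigma_2(\eta)$, which is the homogeneity (Euler) relation for $\sigma_2$ and guarantees that the negative contribution $-Ae^{-A\vp}L(\vp)$ is in fact bounded below by a positive multiple of $Ae^{-A\vp}\sum_iG^{i\ol i}$ rather than merely being $O(A e^{-A\vp})$; this is where the hypothesis $\chi\geq\ve_0\omega$ enters and why $\ve_0$ shows up explicitly. Everything else is bookkeeping: matching coefficients and verifying that each error term is either a multiple of $\sum_iG^{i\ol i}$ (absorbed), a multiple of $\lambda_1$ (absorbed via Lemma \ref{control C lambda1}), or $O(Ae^{-A\vp})$ (absorbed into $-CAe^{-A\vp}$). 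Collecting all surviving terms yields exactly \eqref{main-estimate-1}, completing the proof of the proposition.
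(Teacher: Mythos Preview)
Your proposal is correct and follows essentially the same approach as the paper: apply the maximum principle $0\ge L(\hat Q)$, compute $L(e^{-A\vp})$ via $L(\vp)=2-G^{i\ol i}\chi_{i\ol i}\le 2-\ve_0\sum_iG^{i\ol i}$, and then feed \eqref{LQ Lemma equation 2}, \eqref{LQ Lemma vanish equation 1}, \eqref{LQ Lemma vanish equation 2} and Lemma~\ref{control C lambda1} into \eqref{LQ Lemma equation 4}. Your term-by-term bookkeeping (explaining how each coefficient in \eqref{main-estimate-1} arises) is in fact more detailed than the paper's own one-line reference to these ingredients.
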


\begin{proof}

At $x_{0}$, we have
\begin{align}
0& \geq L(\hat{Q})\notag\\
&=L(\log\lambda_{1}+h(|\partial\vp|_{g}^{2})) -Ae^{-A\varphi}L(\varphi)+A^{2}e^{-A\varphi}G^{i\overline{i}}|e_{i}(\varphi)|^{2}.\notag
\end{align}
Note
\begin{equation*}
L(\vp)=G^{i\overline{i}}(\tilde{g}_{i\overline{i}}-\chi_{i\overline{i}})=2-G^{i\overline{i}}\chi_{i\overline{i}}\leq 2-\ve_{0}\sum_{i}G^{i\overline{i}}.
\end{equation*}
Thus by (\ref{LQ Lemma equation 4}) together with estimates (\ref{LQ Lemma equation 2}), (\ref{LQ Lemma vanish equation 1}), (\ref{LQ Lemma vanish equation 2})
and Lemma \ref{control C lambda1}, one get (\ref{main-estimate-1}) immediately.

\end{proof}

By concavity of $\log\sigma_{2}$ and (\ref{Definition of Fijkl}), we see that $-G^{k\overline{l},l\overline{k}}>0$ and $(-G^{i\overline{i},k\overline{k}})$ is a non-negative definite matrix. Hence, the "good" positive terms at the right hand of (\ref{main-estimate-1}) is
\begin{equation*}
I=(2-\ve)\sum_{\alpha>1}\frac{G^{i\overline{i}}|e_{i}(\varphi_{V_{\alpha}V_{1}})|^{2}}{\lambda_{1}(\lambda_{1}-\lambda_{\alpha})}
             -\frac{G^{k\overline{l},l\overline{k}}|V_{1}(\tilde{g}_{k\overline{l}})|^{2}}{\lambda_{1}}
             -\frac{G^{i\overline{i},k\overline{k}}V_{1}(\tilde{g}_{k\overline{k}})V_{1}(\tilde{g}_{i\overline{i}})}{\lambda_{1}}.
\end{equation*}
In next section, we will use this "good" positive terms to control the "bad"  term in (\ref{main-estimate-1}),
\begin{equation*}
II=(1+\ve)\frac{G^{i\overline{i}}|{e_{i}(\varphi_{V_{1}V_{1}})}|^{2}}{\lambda_{1}^{2}}.
\end{equation*}

As an application of Proposition \ref{LQ Lemma},  we  get  the following partial estimate of real Hessian $\nabla^2\varphi$.
\begin{corollary}\label{Lemma 2} There exists a uniform constant $C_{A}$  depending on $A$ such that
\begin{equation}\label{Lemma 2 equation 1}
\sum_{i=2}^{n}\sum_{k=1}^{n}(|e_{i}e_{k}(\vp)|^{2}+|e_{i}\overline{e}_{k}(\vp)|^{2})\leq C_{A}, \quad \sum_{i=2}^{n}|\eta_{i}|\leq C_{A}
\end{equation}
and
\begin{equation}\label{Lemma 2 equation 2}
\lambda_{1}\leq C_{A}\eta_{1}+C,
\end{equation}
where $\eta_{i}=\tilde{g}_{i\overline{i}}=\chi_{i\overline{i}}+\vp_{i\overline{i}}$ for $i=1,2,\cdots,n$.
\end{corollary}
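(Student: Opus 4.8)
**The plan is to extract the three claimed inequalities directly from the inequality \eqref{main-estimate-1} of Proposition \ref{LQ Lemma}, exploiting the sign of each term and the normalization $\lambda_1 \geq \eta_1 - C$.** First I would fix the constant $A$ once and for all so that the coefficient $\ve_0 A e^{-A\varphi} - \frac{C}{\ve}$ of $\sum_i G^{i\ol i}$ becomes positive — say $\geq 1$ — after choosing $\ve = \frac12$; this is legitimate since $\varphi$ is $C^1$-bounded, and it fixes the meaning of "$C_A$" in the statement. With that choice, every term on the right-hand side of \eqref{main-estimate-1} is manifestly non-negative \emph{except} the single term $-(1+\ve)\frac{G^{i\ol i}|e_i(\varphi_{V_1V_1})|^2}{\lambda_1^2}$ and the constant $-CAe^{-A\varphi}$. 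However, we should not try to absorb the bad third-order term here (that is the business of the next section); instead the key observation is cruder: the first-order term $A^2 e^{-A\varphi} G^{i\ol i}|\varphi_i|^2$ is bounded, and the bad third-order term, by Cauchy--Schwarz against the good term $2\sum_{\alpha>1}\frac{G^{i\ol i}|e_i(\varphi_{V_\alpha V_1})|^2}{\lambda_1(\lambda_1-\lambda_\alpha)}$ together with the quadratic-gradient terms $h''G^{i\ol i}|e_i(|\partial\varphi|_g^2)|^2$, is in fact also controlled. Actually the cleanest route is to argue slightly differently, as I explain next.

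Rather than juggling the third-order terms, I would go back one step to \eqref{LQ Lemma equation 4} combined with Lemma \ref{control C lambda1}, or equivalently rearrange \eqref{main-estimate-1}. The point is that if one is willing to \emph{discard} the good second-order gradient term and the first-order term, then \eqref{main-estimate-1} says schematically
\begin{equation*}
\frac{h'}{4}\sum_k G^{i\ol i}(|e_ie_k(\varphi)|^2 + |e_i\ol e_k(\varphi)|^2) + \sum_i G^{i\ol i} \leq (1+\ve)\frac{G^{i\ol i}|e_i(\varphi_{V_1V_1})|^2}{\lambda_1^2} + CAe^{-A\varphi} + (\text{non-positive terms}).
\end{equation*}
The right-hand bad term is bounded by $C\sum_i G^{i\ol i}\cdot\frac{(C\lambda_1)^2}{\lambda_1^2}\leq C\sum_i G^{i\ol i}$ \emph{only if} we could bound $|e_i(\varphi_{V_1V_1})|\leq C\lambda_1$, which is false. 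So the honest statement is: this Corollary is the \emph{easy} consequence obtained by throwing away the good eigenvalue terms \emph{and} the bad third-order term together — one uses that after absorbing, as in Section 4's argument the reader is told about, the bad term $II$ is dominated by the good term $I$ up to $C\sum_i G^{i\ol i}$, so that \eqref{main-estimate-1} yields $\sum_i G^{i\ol i} + h'\sum_{k,i}G^{i\ol i}(|e_ie_k\varphi|^2+|e_i\ol e_k\varphi|^2) \leq CAe^{-A\varphi} \leq C_A$. Since by \eqref{Inequality 2} we have $G^{i\ol i}\geq c\sum_k G^{k\ol k}$ for $i\geq 2$, and since $\sum_k G^{k\ol k}\geq \frac{2(n-1)}{n}\sigma_2^{-1/2}$ is bounded below away from zero once $\sigma_2 = e^F\binom n2^{-1}$ is bounded (from $\|F\|_{C^0}$), dividing through gives $\sum_{i\geq 2}\sum_k(|e_ie_k\varphi|^2 + |e_i\ol e_k\varphi|^2)\leq C_A$; in particular $|\eta_i| = |\tilde g_{i\ol i}|\leq |\varphi_{i\ol i}| + C \leq C_A$ for $i\geq 2$, giving the first two bounds in \eqref{Lemma 2 equation 1}.

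For \eqref{Lemma 2 equation 2} I would note that $\lambda_1 = \lambda_1(\Phi)(x_0) = \lambda_1(\nabla^2\varphi)(x_0)$ is the largest eigenvalue of the real Hessian, so $\lambda_1 \leq |\nabla^2\varphi|_g \leq C + C\max_\alpha\max_i(|e_ie_\alpha\varphi| + \cdots)$ where the second-order quantities with a second index $\neq 1$ in the $\{e_i\}$-frame are already bounded by \eqref{Lemma 2 equation 1}; the only second-order quantities not yet controlled are those involving $e_1, \ol e_1$ in \emph{both} slots, whose size is comparable to $\eta_1 = \tilde g_{1\ol 1}$ up to the bounded quantities $\chi_{1\ol 1}$ and the off-diagonal pieces. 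Converting from the complex frame $\{e_i\}$ to the real frame and using that all mixed and lower-index components are bounded by $C_A$ (and that $\sum_{i\geq 2}|\eta_i| \leq C_A$), one concludes $|\nabla^2\varphi|_g \leq C_A\eta_1 + C$, hence $\lambda_1 \leq C_A \eta_1 + C$. \textbf{The main obstacle} is the bookkeeping in the last step: one must be careful that the real Hessian $\nabla^2\varphi$ in an $x_0$-normal coordinate system is controlled by the complex Hessian components $\varphi_{i\ol j}$ (plus torsion/first-order terms of $J$, which contribute only $C\|\varphi\|_{C^1}$), so that the single large direction contributing to $\lambda_1$ is exactly the one tracked by $\eta_1$; this is where the relation \eqref{real frame and complex frame} and the almost-Hermitian structure equations enter, and where a direct but careful estimate — rather than any clever trick — is needed.
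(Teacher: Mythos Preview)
There is a genuine gap. Your proposal is circular: to dispose of the bad term $(1+\ve)\frac{G^{i\ol i}|e_i(\varphi_{V_1V_1})|^2}{\lambda_1^2}$ you invoke ``Section 4's argument'' that $II\le I+C\sum_iG^{i\ol i}$, but every lemma in Section 4 (Lemma \ref{Lemma 9}, Lemma \ref{Lemma 4}, Lemma \ref{Lemma 8}, Lemma \ref{Lemma 10}) \emph{uses} this corollary in its proof. The corollary must therefore be established directly from \eqref{main-estimate-1}, and the missing idea is the critical-point relation \eqref{first-derivative}: since $d\hat Q(x_0)=0$ one has $\frac{e_i(\varphi_{V_1V_1})}{\lambda_1}=Ae^{-A\vp}e_i(\vp)-h'e_i(|\partial\vp|_g^2)$, so with $\ve=\tfrac12$
\[
-\frac{3}{2}\frac{G^{i\ol i}|e_i(\varphi_{V_1V_1})|^2}{\lambda_1^2}\ \ge\ -C_A\sum_iG^{i\ol i}-2(h')^2G^{i\ol i}\bigl|e_i(|\partial\vp|_g^2)\bigr|^2,
\]
and the last piece is absorbed by the good term $h''G^{i\ol i}|e_i(|\partial\vp|_g^2)|^2$ via $h''\ge 2(h')^2$ from \eqref{Properties of h}. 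Dropping the remaining non-negative terms and using \eqref{lower bound of sum Fii} then yields $\sum_kG^{i\ol i}(|e_ie_k\vp|^2+|e_i\ol e_k\vp|^2)\le C_A\sum_kG^{k\ol k}$, from which \eqref{Inequality 2} gives the first claim. Note also that one does \emph{not} fix $A$ here; the coefficient $\ve_0Ae^{-A\vp}-\tfrac{C}{\ve}$ is simply absorbed into $-C_A$.

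Your sketch of \eqref{Lemma 2 equation 2} also has a hole: after \eqref{Lemma 2 equation 1} you control $e_ie_k\vp$ only for $i\ge 2$, so the quantities $e_1e_1(\vp)$ and $\ol e_1\ol e_1(\vp)$ are not yet bounded, and they are \emph{not} comparable to $\eta_1=\tilde g_{1\ol 1}$ alone. The paper avoids this by a different route: from $G^{i\ol i}\ge (C\eta_1)^{-1}$ for \emph{all} $i$ (via \eqref{Inequality 3}) and the inequality just derived,
\[
\lambda_1^2\le \sum_{i,k}(|e_ie_k\vp|^2+|e_i\ol e_k\vp|^2)+C\le C\eta_1\sum_{i,k}G^{i\ol i}(|e_ie_k\vp|^2+|e_i\ol e_k\vp|^2)+C\le C_A\eta_1\sum_kG^{k\ol k}+C\le C_A\eta_1^2+C,
\]
which gives $\lambda_1\le C_A\eta_1+C$ without ever isolating the $i=1$ terms.
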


\begin{proof}
By (\ref{first-derivative}), we have
\begin{equation*}
\begin{split}
-\frac{3}{2}\frac{G^{i\overline{i}}|e_{i}(\varphi_{V_{1}V_{1}})|^{2}}{\lambda_{1}^{2}}
&   =   -\frac{3}{2}G^{i\overline{i}}|Ae^{-A\vp}\varphi_{i}-h'e_{i}(|\partial\varphi|_{g}^{2})|^{2}\\
& \geq  -C_{A}\sum_{i}G^{i\overline{i}}-2(h')^{2}G^{i\overline{i}}|e_{i}(|\partial\varphi|_{g}^{2})|^{2}.
\end{split}
\end{equation*}
 Recall that the matrix $(-G^{i\overline{i},k\overline{k}})$ is non-negative and $-G^{k\overline{l},l\overline{k}}>0$. Then
\begin{equation*}
(2-\ve)\sum_{\alpha>1}\frac{G^{i\overline{i}}|e_{i}(\varphi_{V_{\alpha}V_{1}})|^{2}}{\lambda_{1}(\lambda_{1}-\lambda_{\alpha})}
             -\frac{G^{k\overline{l},l\overline{k}}|V_{1}(\tilde{g}_{k\overline{l}})|^{2}}{\lambda_{1}}
             -\frac{G^{i\overline{i},k\overline{k}}V_{1}(\tilde{g}_{k\overline{k}})V_{1}(\tilde{g}_{i\overline{i}})}{\lambda_{1}}\geq 0.
\end{equation*}
Thus by choosing $\ve=\frac{1}{2}$  in  (\ref{main-estimate-1}), we obtain
\begin{equation*}
\begin{split}
0 \geq ~~& \frac{h'}{4}\sum_{k}G^{i\overline{{i}}}(|e_{i}e_{k}(\varphi)|^{2}
+|e_{i}\overline{e}_{k}(\varphi)|^{2})+h''G^{i\overline{i}}|e_{i}(|\partial\varphi|_{g}^{2})|^{2}\\
&-2(h')^{2}G^{i\overline{i}}|e_{i}(|\partial\varphi|_{g}^{2})|^{2}-C_{A}\sum_{k}G^{k\overline{k}}-C_{A}.
\end{split}
\end{equation*}
By (\ref{lower bound of sum Fii}) and (\ref{Properties of h}), it  follows
\begin{equation}\label{Lemma 2 equation 3}
0 \geq \sum_{k}G^{i\overline{{i}}}(|e_{i}e_{k}(\varphi)|^{2}+|e_{i}\overline {e}_{k}(\varphi)|^{2})-C_{A}\sum_{k}G^{k\overline{k}}.
\end{equation}
Combining this with (\ref{Inequality 2}), we obtain
\begin{equation*}
\sum_{i=2}^{n}\sum_{k=1}^{n}(|e_{i}e_{k}(\vp)|^{2}+|e_{i}\overline{e}_{k}(\vp)|^{2})\leq C_{A}.
\end{equation*}
In particular, for $i\geq2$, it is clear that
\begin{equation*}
\eta_{i}=\chi_{i\overline{i}}+\vp_{i\overline{i}}=\chi_{i\overline{i}}+e_{i}\overline{e}_{i}(\vp)-[e_{i},\overline{e}_{i}]^{(0,1)}(\vp)\leq C_{A}.
\end{equation*}
Hence (\ref{Lemma 2 equation 1}) is true.

Next,  we prove (\ref{Lemma 2 equation 2}).  By (\ref{the first order derivative of F}) and (\ref{Inequality 3}), we see
\begin{equation*}
G^{n\overline{n}}\geq\cdots\geq G^{1\overline{1}}\geq \frac{1}{C\eta_{1}}.
\end{equation*}
Combining  this with (\ref{Lemma 2 equation 3}),   we have
\begin{equation*}
\begin{split}
\lambda_{1}^{2} &   =  \left(V_{1}V_{1}(\vp)-(\nabla_{V_{1}}V_{1})(\vp)\right)^{2}\\[1mm]
                & \leq \sum_{i,k}(|e_{i}e_{k}(\varphi)|^{2}+|e_{i}\overline {e}_{k}(\varphi)|^{2})+C\\
                & \leq C\eta_{1}\sum_{i,k}G^{i\overline{i}}(|e_{i}e_{k}(\varphi)|^{2}+|e_{i}\overline {e}_{k}(\varphi)|^{2})+C\\
                & \leq C_{A}\eta_{1}\sum_{k}G^{k\overline{k}}+C\\
                &   =  C_{A}\eta_{1}\sum_{k}\frac{\sigma_{1}(k)}{\sigma_{2}}+C\\
                & \leq C_{A}\eta_{1}^{2}+C,
\end{split}
\end{equation*}
where we used $\eta_{1}\geq\eta_{2}\geq\cdots\geq\eta_{n}$ in  the last inequality.  Thus  (\ref{Lemma 2 equation 2}) is true.
\end{proof}

Corollary  \ref{Lemma 2} will be used in next section.

\section{Estimate of  $II$}

We decompose  $II$ into three parts as follows,
\begin{equation*}
\begin{split}
&(1+\ve)\frac{G^{1\overline{1}}|{e_{1}(\varphi_{V_{1}V_{1}})}|^{2}}{\lambda_{1}^{2}}
+3\ve\sum_{i\geq 2}\frac{G^{i\overline{i}}|{e_{i}(\varphi_{V_{1}V_{1}})}|^{2}}{\lambda_{1}^{2}}
+(1-2\ve)\sum_{i\geq 2}\frac{G^{i\overline{i}}|{e_{i}(\varphi_{V_{1}V_{1}})}|^{2}}{\lambda_{1}^{2}}\\
&=:II_{1}+II_{2}+II_{3}.\\
\end{split}
\end{equation*}
In the following,  we always use  $C_{A}$ to denote a uniform constant depending on $A$. Without loss of generality, we may assume that $\lambda_{1}\geq\frac{C_{A}}{\ve}$.  We first estimate $II_{1}$ and $II_{2}$.

\begin{lemma}\label{II1 and II2}
\begin{equation*}
II_{1}\leq C_{A}+2(h')^{2}G^{1\overline{1}}|e_{1}(|\partial\vp|_{g}^{2})|^{2}
\end{equation*}
and
\begin{equation*}
II_{2} \leq 12\ve A^{2}e^{-2A\vp}\sum_{i\geq2}G^{i\overline{i}}|e_{i}(\vp)|^{2}
        +2(h')^{2}\sum_{i\geq2}G^{i\overline{i}}|e_{i}(|\partial\varphi|_{g}^{2})|^{2}.
\end{equation*}
\end{lemma}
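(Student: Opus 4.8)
The plan is to estimate $II_1$ and $II_2$ separately, in both cases by using the critical point equation $d\hat{Q}(x_0)=0$ to replace the third-order term $e_i(\varphi_{V_1V_1})/\lambda_1$ by lower-order quantities, exactly as in \eqref{first-derivative}.

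For $II_1$, I would start from \eqref{first-derivative}, which gives
\[
\frac{e_{1}(\varphi_{V_1V_1})}{\lambda_1}=Ae^{-A\varphi}e_1(\varphi)-h'e_1(|\partial\varphi|_g^2).
\]
Squaring, multiplying by $(1+\ve)G^{1\bar1}$ and using $(a+b)^2\le(1+\ve)a^2+(1+C\ve^{-1})b^2$ (or simply $(a+b)^2 \le 2a^2+2b^2$, since the $\ve$-dependence is harmless here because the $h'$-term will be absorbed and the $e_1(\varphi)$-term is bounded), I get a term of the form $C_A G^{1\bar1}|e_1(\varphi)|^2$ plus $2(h')^2 G^{1\bar1}|e_1(|\partial\varphi|_g^2)|^2$ (using $(h')\le\frac12$ to convert the factor $(1+\ve)$ into a clean $2$). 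The first term is bounded by $C_A G^{1\bar1}\le C_A\sum_k G^{k\bar k}$; but in fact we want a bound by a pure constant $C_A$, so here I would instead note that $G^{1\bar1}=\sigma_1(1)/\sigma_2$ and, since $\eta_1$ is the largest eigenvalue and $\lambda_1\approx\eta_1$ is large, $G^{1\bar1}$ is comparably small — more carefully, $G^{1\bar1}|e_1(\varphi)|^2\le G^{1\bar1}|\partial\varphi|^2_g$ and $G^{1\bar1}\le C$ directly from $\sigma_1(1)\le\sigma_1$ and $\sigma_2\ge\sigma_1(1)\eta_n/(n-1)$-type bounds; combined with $\|\varphi\|_{C^1}\le C$ this gives the constant bound $C_A$. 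So $II_1\le C_A+2(h')^2G^{1\bar1}|e_1(|\partial\varphi|_g^2)|^2$.

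For $II_2$, the same substitution \eqref{first-derivative} applies to each $i\ge2$:
\[
\frac{e_{i}(\varphi_{V_1V_1})}{\lambda_1}=Ae^{-A\varphi}e_i(\varphi)-h'e_i(|\partial\varphi|_g^2),
\]
so $|e_i(\varphi_{V_1V_1})/\lambda_1|^2\le 2A^2e^{-2A\varphi}|e_i(\varphi)|^2+2(h')^2|e_i(|\partial\varphi|_g^2)|^2$. Multiplying by $3\ve G^{i\bar i}$, summing over $i\ge2$, and bounding $2\cdot3\ve=6\ve\le 12\ve$ (keeping the stated constant $12$ for later convenience), I obtain
\[
II_2\le 12\ve A^2e^{-2A\varphi}\sum_{i\ge2}G^{i\bar i}|e_i(\varphi)|^2+2(h')^2\sum_{i\ge2}G^{i\bar i}|e_i(|\partial\varphi|_g^2)|^2,
\]
which is exactly the claimed inequality — note that here there is no issue with the $e_i(\varphi)$ term since it is kept rather than absorbed.

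The only genuinely delicate point is the passage from $V_1V_1e_i(\varphi)$ (which is what naturally appears in $II$) to $e_i(\varphi_{V_1V_1})$: these differ by $|V_1V_1e_i(\varphi)-e_i(\varphi_{V_1V_1})|\le C\lambda_1$ (commutator terms involving one derivative of $\varphi$ and the fixed frame), so dividing by $\lambda_1$ the discrepancy in $e_i(\varphi_{V_1V_1})/\lambda_1$ is only $O(1)$; squaring and multiplying by $G^{i\bar i}$ it contributes at most $C\sum_i G^{i\bar i}$, or better $C_A$ after the same smallness estimate on $G^{i\bar i}$ used above, and is harmless. This is routine given that $\|\varphi\|_{C^1}$ is controlled and $\lambda_1$ is assumed large, so I do not expect a real obstacle; the lemma is essentially bookkeeping with \eqref{first-derivative} and the Cauchy--Schwarz inequality, with care taken to keep the precise constants ($2$, $12\ve$) needed in the subsequent estimate of $II_3$.
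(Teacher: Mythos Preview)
Your approach is correct and essentially identical to the paper's: substitute \eqref{first-derivative} into $II_1$ and $II_2$, apply $(a+b)^2\le 2a^2+2b^2$ (with the obvious variant needed to get the factor $2$ in front of $(h')^2$), and for $II_1$ use that $G^{1\bar1}$ is bounded. Two small clarifications: first, your justification for $G^{1\bar1}\le C$ is not quite right as written --- the clean argument is that $\sigma_2$ is bounded below from the equation while $\sigma_1(1)=\sum_{i\ge2}\eta_i\le C_A$ by Corollary~\ref{Lemma 2}; second, your commutator discussion is unnecessary, since $II$ is defined directly in terms of $e_i(\varphi_{V_1V_1})$ and \eqref{first-derivative} is an exact identity (coming from $e_i(\hat Q)=0$ at $x_0$), so no passage from $V_1V_1e_i(\varphi)$ is involved.
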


\begin{proof}
Using (\ref{first-derivative}), we have
\begin{equation*}
II_{1} =\frac{G^{1\overline{1}}|e_{1}(\varphi_{V_{1}V_{1}})|^{2}}{\lambda_{1}^{2}}
=G^{1\overline{1}}|Ae^{-A\vp}e_{1}(\varphi)-h'e_{1}(|\partial\varphi|_{g}^{2})|^{2}.
\end{equation*}
Since  $G^{1\overline{1}}=\frac{\sigma_{1}(1)}{\sigma_2}\leq C$  by Corollary \ref{Lemma 2},  we get
\begin{equation*}
II_{1}
\leq  C_{A}+2(h')^{2}G^{1\overline{1}}|e_{1}(|\partial\varphi|_{g}^{2})|^{2}.
\end{equation*}
Similarly,
\begin{equation*}
\begin{split}
II_{2} & = 3\ve \sum_{i\geq2}\frac{G^{i\overline{i}}|e_{i}(\varphi_{V_{1}V_{1}})|^{2}}{\lambda_{1}^{2}}\\
&   =   3\ve G^{i\overline{i}}|Ae^{-A\vp}e_{i}(\varphi)-h'e_{i}(|\partial\varphi|_{g}^{2})|^{2}\\[2mm]
& \leq  12\ve A^{2}e^{-2A\vp}\sum_{i\geq2}G^{i\overline{i}}|e_{i}(\vp)|^{2}
        +4\ve(h')^{2}\sum_{i\geq2}G^{i\overline{i}}|e_{i}(|\partial\varphi|_{g}^{2})|^{2}\\
& \leq  12\ve A^{2}e^{-2A\vp}\sum_{i\geq2}G^{i\overline{i}}|e_{i}(\vp)|^{2}
        +2(h')^{2}\sum_{i\geq2}G^{i\overline{i}}|e_{i}(|\partial\varphi|_{g}^{2})|^{2}.
\end{split}
\end{equation*}
Here we used $0<\ve\leq\frac{1}{2}$ in the last inequality .
\end{proof}

In order to estimate $II_{3}$,  we  need  several lemmas below.  Let
\begin{equation*}
\tilde{e}=\frac{1}{\sqrt{2}}(V_{1}-\sqrt{-1}JV_{1}).
\end{equation*}
be  $(1,0)$-tpye vector field in the coordinate system $(U,\{x^{\alpha}\}_{\alpha=1}^{2n})$.
Since $\tilde{e}$ is $g$-unit, we can write $\tilde{e} $ at $x_0$ as
\begin{equation*}
\tilde{e}=\sum_{q} \nu_{q}e_{q} \text{~and~} \sum_{q=1}^{n}|\nu_q|^{2}=1,
\end{equation*}
for complex number $\nu_{1},\nu_{2},\cdots,\nu_{n}$. There are also  numbers $\mu_{\alpha}$ ($\alpha >1$) with $\sum_{\alpha>1}\mu_{\alpha}^{2}=1$ such that
\begin{equation*}
JV_{1}=\sum_{\alpha>1}\mu_{\alpha}V_{\alpha}.
\end{equation*}
 Then we have
\begin{equation}\label{Lemma 4 equation 1}
e_{i}(\varphi_{V_{1}V_{1}})=\sqrt{2}\sum_{q} \overline{\nu_{q}}V_{1}(\tilde{g}_{i\overline{q}})-
\sqrt{-1}\sum_{\alpha>1}\mu_{\alpha}e_{i}(\varphi_{V_{1}V_{\alpha}})+E,
\end{equation}
where $E$ denotes a term satisfying $|E|\leq C \lambda_{1}$.
 A similar computation of (\ref{Lemma 4 equation 1}) can be found in \cite[(5.31)]{CTW16}.

\begin{lemma}\label{Lemma 3}
\begin{equation*}
|\nu_{q}|\leq \frac{C_{A}}{\lambda_{1}}~ \text{~for $q\geq2$}.
\end{equation*}
\end{lemma}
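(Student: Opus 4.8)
The plan is to reduce the claim to a bound on a single component of the real Hessian of $\varphi$, with the crucial factor $\lambda_1^{-1}$ produced by the fact that $V_1$ is an eigenvector of $\nabla^2\varphi$ at $x_0$. First I would record that, since $\{e_i\}$ is unitary with respect to $g$, since $\tilde e=\sum_p\nu_p e_p$ at $x_0$, and since $V_1=\tfrac{1}{\sqrt2}(\tilde e+\overline{\tilde e})$ with $g(e_p,e_q)=0$ and $g(e_p,\overline{e}_q)=\delta_{pq}$ at $x_0$ by \eqref{real frame and complex frame}, one has $g(V_1,\overline{e}_q)=\nu_q/\sqrt2$. Since moreover $V_1$ is a $g$-unit eigenvector of $\nabla^2\varphi$ for the eigenvalue $\lambda_1$ at $x_0$, so that $\nabla^2\varphi(V_1,\cdot)=\lambda_1\,g(V_1,\cdot)$, we get
\begin{equation*}
\nu_q \;=\; \sqrt2\,g(V_1,\overline{e}_q) \;=\; \frac{\sqrt2}{\lambda_1}\,\nabla^2\varphi(V_1,\overline{e}_q),
\end{equation*}
and it therefore suffices to prove $|\nabla^2\varphi(V_1,\overline{e}_q)|\le C_A$ for $q\ge2$.

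To this end I would expand, using $V_1=\tfrac{1}{\sqrt2}(\tilde e+\overline{\tilde e})$ and $\tilde e=\sum_p\nu_p e_p$,
\begin{equation*}
\nabla^2\varphi(V_1,\overline{e}_q) \;=\; \frac{1}{\sqrt2}\sum_p\Big(\nu_p\,\nabla^2\varphi(e_p,\overline{e}_q)+\overline{\nu}_p\,\nabla^2\varphi(\overline{e}_p,\overline{e}_q)\Big),
\end{equation*}
and estimate the two families of terms separately. For the first, the difference between the Levi-Civita Hessian and $\ddbar\varphi$ is of first order in $\varphi$, so at $x_0$ we have $\nabla^2\varphi(e_p,\overline{e}_q)=\varphi_{p\overline{q}}+O(1)=\delta_{pq}\eta_q-\chi_{p\overline{q}}+O(1)$, which is $O(C_A)$ for every $p$ once $q\ge2$, because $|\eta_q|\le C_A$ by Corollary \ref{Lemma 2} and $\|\chi\|$ is bounded; together with $\sum_p|\nu_p|\le\sqrt n$ this part of the sum is $O(C_A)$. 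For the second family, again $\nabla^2\varphi(\overline{e}_p,\overline{e}_q)=\overline{e}_p\overline{e}_q(\varphi)+O(1)$; since $\varphi$ is real we have $|\overline{e}_p\overline{e}_q(\varphi)|=|e_pe_q(\varphi)|$, and writing $e_1e_q(\varphi)=e_qe_1(\varphi)+[e_1,e_q](\varphi)$ with bounded commutator, the estimate $\sum_{i\ge2}\sum_k(|e_ie_k(\varphi)|^2+|e_i\overline{e}_k(\varphi)|^2)\le C_A$ from Corollary \ref{Lemma 2} gives $|\overline{e}_p\overline{e}_q(\varphi)|\le C_A$ whenever $p\ge2$ or $q\ge2$; hence, for the fixed index $q\ge2$, every term $\nabla^2\varphi(\overline{e}_p,\overline{e}_q)$ is $O(C_A)$ and this part of the sum is $O(C_A)$ as well. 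Adding the two contributions yields $|\nabla^2\varphi(V_1,\overline{e}_q)|\le C_A$ and hence $|\nu_q|\le C_A/\lambda_1$.

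The routine parts are the frame identities at $x_0$ and the first-order comparison between $\nabla^2\varphi$ and $\ddbar\varphi$. The step that needs care is the control of the ``$(0,2)$-type'' second derivatives $\nabla^2\varphi(\overline{e}_p,\overline{e}_q)$: these are not seen by the complex Hessian $\tilde g$ (which only records $\varphi_{p\overline{q}}$), so one must route their estimate through the real-Hessian bounds of Corollary \ref{Lemma 2}, using the reality of $\varphi$ and the boundedness of the frame commutators to reduce to the single pair $(p,q)=(1,1)$ that the hypothesis $q\ge2$ rules out.
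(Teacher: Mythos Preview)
Your argument is correct and follows essentially the same strategy as the paper: both use the eigenvector equation for $V_1$ together with the second-derivative bounds from Corollary~\ref{Lemma 2} to extract the factor $\lambda_1^{-1}$. The only cosmetic difference is that the paper passes through the real coordinate frame $\{\partial_\alpha\}$ (bounding $|\nabla^2_{\alpha\beta}\varphi|$ for $\alpha\geq 3$ and then the components $V_1^\alpha$), whereas you stay in the complex frame and pair $\nabla^2\varphi(V_1,\cdot)$ directly with $\overline{e}_q$; the content is the same.
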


\begin{proof}
By (\ref{Lemma 2 equation 1}), we have
\begin{equation*}
\sum_{i=2}^{n}\sum_{k=1}^{n}(|e_{i}e_{k}(\vp)|^{2}+|e_{i}\overline{e}_{k}(\vp)|^{2})\leq C_{A}.
\end{equation*}
Combining this with (\ref{real frame and complex frame}), we obtain
\begin{equation}\label{Lemma 3 equation 2}
\sum_{\alpha\geq 3}^{2n}\sum_{\beta\geq 1}^{2n}|\nabla_{\alpha\beta}^{2}\vp|\leq C_{A}.
\end{equation}
This means
\begin{equation*}
|\Phi_{\alpha}^{\beta}|\leq C_{A}~\text{~for $3\leq\alpha\leq2n$, $1\leq\beta\leq2n$}.
\end{equation*}
Recalling that $V_{1}$ is the eigenvector of $\Phi$ corresponding to $\lambda_{1}$, we have
\begin{equation*}
|V_{1}^{\alpha}|=\left|\frac{1}{\lambda_{1}}\sum_{\beta=1}^{2n}\Phi_{\beta}^{\alpha} V_{1}^{\beta}\right|\leq \frac{C}{\lambda_{1}}~ \text{~for $3\leq\alpha\leq 2n$}.
\end{equation*}
 Thus for any $q\geq2$,  we get
\begin{equation*}
|\nu_{q}|=|V_{1}^{2q-1}|+|V_{1}^{2q}|\leq\frac{C_{A}}{\lambda_{1}}.
\end{equation*}

\end{proof}

By Corollary \ref{Lemma 2} and Lemma \ref{Lemma 3}, we get an upper bound of $G^{i\overline{i}}$ for $i\geq 2$.

\begin{lemma}\label{Lemma 9}
For $i\geq2$, at $x_{0}$, if $\lambda_{1}\geq\frac{C_{A}}{\ve}$, we have
\begin{equation*}
(1-\ve)G^{i\overline{i}}\leq \frac{1}{2\sigma_{2}}\left(\lambda_{1}+\sum_{\alpha>1}\lambda_{\alpha}\mu_{\alpha}^{2}\right).
\end{equation*}
\end{lemma}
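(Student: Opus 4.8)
The plan is to pass everything through the quantity $\sum_q|\nu_q|^2\eta_q$, which equals $\tilde\omega(\tilde e,\bar{\tilde e})$ at $x_0$ and links the complex-Hessian data $\eta_i$ (hence $G^{i\bar i}$) to the real-Hessian data $\lambda_1,\lambda_\alpha,\mu_\alpha$ appearing on the right-hand side of the inequality. First I would evaluate it from the real side. Using the standard complex/real Hessian conversion on an almost Hermitian manifold, $\vp_{q\bar r}=\nabla^2\vp(e_q,\bar e_r)+O(1)$ (the $O(1)$ absorbs the torsion of $J$ and the curvature of $g$; in the flat K\"ahler model this is just $2\vp_{1\bar1}=\vp_{11}+\vp_{22}$, see \cite{CTW16}), together with $\chi(\tilde e,\bar{\tilde e})=O(1)$ and the fact that the off-diagonal terms $\nabla^2\vp(e_q,\bar e_r)$ with $q\ne r$ are $O(1)$, one gets
\begin{equation*}
\sum_q|\nu_q|^2\eta_q=\nabla^2\vp(\tilde e,\bar{\tilde e})+O(1)=\frac{1}{2}\big(\nabla^2\vp(V_1,V_1)+\nabla^2\vp(JV_1,JV_1)\big)+O(1),
\end{equation*}
the last equality because $\tilde e=\frac{1}{\sqrt{2}}(V_1-\sqrt{-1}JV_1)$ and $\nabla^2\vp$ is symmetric, so the cross terms cancel. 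Since $V_1,\dots,V_{2n}$ diagonalise $\nabla^2\vp$ with $\nabla^2\vp(V_1,V_1)=\lambda_1$ and $\nabla^2\vp(V_\alpha,V_\alpha)=\lambda_\alpha+1$ for $\alpha>1$ (recall $\Phi=\nabla^2\vp-\mathrm{Id}+V_1\otimes V_1$ at $x_0$), and $\sum_{\alpha>1}\mu_\alpha^2=1$, this becomes
\begin{equation*}
\sum_q|\nu_q|^2\eta_q=\frac{1}{2}\Big(\lambda_1+\sum_{\alpha>1}\mu_\alpha^2\lambda_\alpha\Big)+O(1).
\end{equation*}

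Next I would evaluate the same quantity from the complex side and see that it is comparable to $\eta_1$, hence to $\sigma_1(i)=\sigma_2 G^{i\bar i}$ for $i\ge2$. By Lemma \ref{Lemma 3}, $|\nu_q|\le C_A/\lambda_1$ for $q\ge2$, so $1-|\nu_1|^2=\sum_{q\ge2}|\nu_q|^2\le(n-1)C_A^2/\lambda_1^2$; by Corollary \ref{Lemma 2}, $|\eta_q|\le C_A$ for $q\ge2$, and $|\eta_1|\le C\lambda_1+C$ because $\tilde\omega\in\Gamma_1(M)$. Using $\eta_1\ge\eta_q$ it follows that
\begin{equation*}
0\le\eta_1-\sum_q|\nu_q|^2\eta_q=\sum_{q\ge2}|\nu_q|^2(\eta_1-\eta_q)\le\frac{C_A}{\lambda_1}\le C_A,
\end{equation*}
while $\sigma_1(i)=\eta_1+\sum_{k\ge 2,\,k\ne i}\eta_k\le\eta_1+C_A$. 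Combining the three displays, for every $i\ge2$,
\begin{equation*}
\sigma_2 G^{i\bar i}=\sigma_1(i)\le\frac{1}{2}\Big(\lambda_1+\sum_{\alpha>1}\mu_\alpha^2\lambda_\alpha\Big)+C_A.
\end{equation*}

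Finally I would absorb the error $C_A$, which is where the hypothesis $\lambda_1\ge C_A/\ve$ is used. Write $S:=\frac{1}{2}\big(\lambda_1+\sum_{\alpha>1}\mu_\alpha^2\lambda_\alpha\big)$, so that the previous line reads $\sigma_2 G^{i\bar i}\le S+C_A$ and the claimed inequality is $(1-\ve)G^{i\bar i}\le S/\sigma_2$; since $\sigma_2>0$ this is equivalent to $(1-\ve)(S+C_A)\le S$, i.e. to $\ve S\ge(1-\ve)C_A$, which holds once $S\ge C_A/\ve$. Now $S=\sum_q|\nu_q|^2\eta_q+O(1)=\eta_1+O(C_A)$ by the second paragraph, while $\lambda_1\le C_A\eta_1+C$ by \eqref{Lemma 2 equation 2}; hence $S\ge\lambda_1/C_A-C_A$, so after enlarging the constant $C_A$ in the statement (replacing it by, say, $2C_A^2+C$ and using $\ve\le\frac{1}{2}$) the assumption $\lambda_1\ge C_A/\ve$ forces $S\ge C_A/\ve$, which finishes the proof. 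The one step carrying real content is the complex/real conversion in the first display, which I expect to be the main (if routine) point; it is entirely parallel to the computations in \cite{CTW16}, and everything else is bookkeeping of the constants $C$ and $C_A$.
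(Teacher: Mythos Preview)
Your proposal is correct and follows essentially the same route as the paper: both arguments compute $\tilde g(\tilde e,\bar{\tilde e})=\sum_q|\nu_q|^2\eta_q$ from the real side (yielding $\tfrac12(\lambda_1+\sum_{\alpha>1}\mu_\alpha^2\lambda_\alpha)+O(1)$ via $\tilde e=\tfrac{1}{\sqrt2}(V_1-\sqrt{-1}JV_1)$) and from the complex side (yielding $\eta_1+O(C_A)$ via Lemma~\ref{Lemma 3} and Corollary~\ref{Lemma 2}), and then absorb the $O(C_A)$ error using $\lambda_1\ge C_A/\ve$ together with \eqref{Lemma 2 equation 2}. Two minor remarks: the word ``equivalent'' in your absorption step should be ``sufficient'' (you only need $(1-\ve)(S+C_A)\le S$, not the converse), and the off-diagonal claim $\nabla^2\vp(e_q,\bar e_r)=O(1)$ for $q\ne r$, while true, is not needed since $\tilde g$ is already diagonal in the $\{e_q\}$ frame at $x_0$.
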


\begin{proof}
By the definition of $\tilde{e}$,  we see
$$\tilde{g}(\tilde{e},\overline{\tilde{e}})  = \sum_{q}|\nu_{q}|^{2}\eta_{q} = |\nu_{1}|^{2}\eta_{1} +\sum_{q=2}^{n}|\nu_{q}|^{2}\eta_{q}.$$
   By  Corollary \ref{Lemma 2} and Lemma \ref{Lemma 3}, it follows
\begin{equation*}
\tilde{g}(\tilde{e},\overline{\tilde{e}})
\geq  \left(1-\frac{C_{A}}{\lambda_{1}^{2}}\right)\eta_{1}-\frac{C_{A}}{\lambda_{1}^{2}}.
\end{equation*}
On the other hand,
\begin{equation*}
\begin{split}
&\tilde{g}(\tilde{e},\overline{\tilde{e}})\\
&  =  g(\tilde{e},\overline{\tilde{e}})+\tilde{e}\overline{\tilde{e}}(\varphi)-[\tilde{e},\overline{\tilde{e}}]^{(0,1)}(\varphi)\\
&  =  1+\frac{1}{2}(V_{1}V_{1}(\varphi)+(JV_{1})(JV_{1})(\varphi)+\sqrt{-1}[V_{1},JV_{1}](\varphi))-[\tilde{e},\overline{\tilde{e}}]^{(0,1)}(\varphi)\\
&  =  \frac{1}{2}\left(\lambda_{1}+\sum_{i\alpha>1}\lambda_{\alpha}\mu_{\alpha}^{2}\right)+1+(\nabla_{V_{1}}V_{1})(\varphi)+(\nabla_{JV_{1}}JV_{1})(\varphi)\\
&  \quad\,   +\sqrt{-1}[V_{1},JV_{1}](\varphi) -[\tilde{e},\overline{\tilde{e}}]^{(0,1)}(\varphi)\\
& \leq \frac{1}{2}\left(\lambda_{1}+\sum_{\alpha>1}\lambda_{\alpha}\mu_{\alpha}^{2}\right)+C.
\end{split}
\end{equation*}
Note  $\lambda_{1}\geq\frac{C_{A}}{\ve}$.   Thus we deduce
\begin{equation*}
\left(1-\frac{C_{A}}{\lambda_{1}^{2}}\right)\eta_{1}\leq \frac{1}{2}\left(\lambda_{1}+\sum_{\alpha>1}\lambda_{\alpha}\mu_{\alpha}^{2}\right)+C.
\end{equation*}
As a consequence,
\begin{equation*}
\eta_{1} \leq \frac{1}{2}\left(\lambda_{1}+\sum_{\alpha>1}\lambda_{\alpha}\mu_{\alpha}^{2}\right)+C+\frac{C_{A}}{\lambda_{1}}\cdot\frac{\eta_{1}}{\lambda_{1}}
         \leq \frac{1}{2}\left(\lambda_{1}+\sum_{\alpha>1}\lambda_{\alpha}\mu_{\alpha}^{2}\right)+C.
\end{equation*}
Hence, for $i\geq 2$, we obtain
\begin{equation*}
(1-\ve)G^{i\overline{i}} = (1-\ve)\frac{\sigma_{1}(i)}{\sigma_{2}}\leq\frac{\eta_{1}}{\sigma_{2}}-\frac{\ve\eta_{1}}{\sigma_{2}}+C
\leq \frac{1}{2\sigma_{2}}\left(\lambda_{1}+\sum_{\alpha>1}\lambda_{\alpha}\mu_{\alpha}^{2}\right),
\end{equation*}
where we used (\ref{Lemma 2 equation 2}) and $\lambda_{1}\geq\frac{C_{A}}{\ve}$ in the last inequality.
\end{proof}

At $x_{0}$, we assume that the eigenvalues of matrix $(-G^{i\overline{i},j\overline{j}})$ are
\begin{equation*}
\kappa_{1}\geq\kappa_{2}\cdots\geq\kappa_{n}.
\end{equation*}
Let $\xi_{i}=(\xi_{i}^{1},\xi_{i}^{2},\cdots,\xi_{i}^{n})$ be the $g$-unit eigenvector corresponding to $\kappa_{i}$ for $i=1,2,\cdots,n$.
Some estimates for eigenvalues $\kappa_{i}$ and its eigenvectors $\xi_{i}$ are given in the following lemma, which plays important role in the estimate of $II_{3}$.

\begin{lemma}\label{Lemma 6 and Lemma 7}
\begin{enumerate}[(1)]
  \item $C_{A}^{-1}\lambda_{1}^{-2}\leq \kappa_{n}\leq C_{A}\lambda_{1}^{-2}$ and $\kappa_{i}\geq C_{A}^{-1}$ for $i\leq n-1$.\\
  \item $\sum_{i=2}^{n}|\xi_{n}^{i}|^{2}\leq C_{A}\lambda_{1}^{-2}$.
\end{enumerate}
\end{lemma}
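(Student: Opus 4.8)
The plan is to exploit the explicit rank-two structure of the matrix $M:=(-G^{i\overline{i},j\overline{j}})$. Writing $a=(a_{1},\dots,a_{n})^{T}$ with $a_{i}=G^{i\overline{i}}=\sigma_{1}(i)/\sigma_{2}$ and $p=\sigma_{2}^{-1}$, formula (\ref{Definition of Fijkl}) gives $M=aa^{T}+p(I-J)$, where $I$ is the identity and $J$ the all-ones $n\times n$ matrix. The first point is that $\sigma_{2}$ is itself comparable to $1$: by (\ref{L partial varphi equation 1}) we have $\sigma_{2}=\binom{n}{2}e^{F(z,\partial\varphi,\varphi)}$, so $c_{0}\leq\sigma_{2}\leq c_{0}^{-1}$ for a uniform $c_{0}>0$; in particular $p$ is uniformly bounded above and below. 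Combining this with Corollary \ref{Lemma 2}, namely $\sum_{i\geq2}|\eta_{i}|\leq C_{A}$ and $\eta_{1}\geq C_{A}^{-1}\lambda_{1}-C$, with the elementary bound $\sigma_{1}=\operatorname{tr}\tilde{g}\leq C\lambda_{1}+C$ (since $|\varphi_{i\overline{i}}|\leq C|\nabla^{2}\varphi|_{g}+C$), and with $\sigma_{1}\geq\eta_{1}$ (as $\sigma_{1}(1)=\sum_{i\geq2}\eta_{i}>0$ on $\Gamma_{2}$), I obtain the size estimates $\eta_{1}^{2}\leq|\eta|^{2}\leq\sigma_{1}^{2}\leq C_{A}\lambda_{1}^{2}$ and $\sigma_{1}\geq\eta_{1}\geq C_{A}^{-1}\lambda_{1}$; throughout I assume $\lambda_{1}$ is large depending on $A$, as in Section 4.

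Next I would diagonalize $M$ using this structure. Since $Ma$ and $M\mathbf{1}$ both lie in $W:=\operatorname{span}\{a,\mathbf{1}\}$, the plane $W$ is $M$-invariant, and on its orthogonal complement $M|_{W^{\perp}}=p\,I$; as $a$ and $\mathbf{1}$ are linearly independent (the $\eta_{i}$ are not all equal once $\lambda_{1}$ is large), $\dim W^{\perp}=n-2$, so $n-2$ of the eigenvalues of $M$ equal $p\asymp1$, which is $\geq C_{A}^{-1}$. For the remaining two eigenvalues $\kappa'\geq\kappa''$ of $M|_{W}$ I would use the symmetric-function identities $\sum_{i}\sigma_{1}(i)=(n-1)\sigma_{1}$, $\sum_{i}\sigma_{1}(i)^{2}=(n-2)\sigma_{1}^{2}+|\eta|^{2}$ and $\sigma_{1}^{2}-|\eta|^{2}=2\sigma_{2}$. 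From the total trace, $\kappa'+\kappa''=\operatorname{tr}M-(n-2)p=|a|^{2}-(n-2)p$, where $|a|^{2}=\sigma_{2}^{-2}\big((n-2)\sigma_{1}^{2}+|\eta|^{2}\big)$; by the size estimates this is comparable to $\lambda_{1}^{2}$ up to constants depending on $A$. Applying the matrix determinant lemma to $M=p(I-J)+aa^{T}$, using $(I-J)^{-1}=I-\frac{1}{n-1}J$ and $|a|^{2}-\frac{1}{n-1}(a^{T}\mathbf{1})^{2}=-2/\sigma_{2}$, one finds $\det M=(n-1)\sigma_{2}^{-n}$, hence $\kappa'\kappa''=(n-1)\sigma_{2}^{-2}\asymp1$. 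Since $0\leq\kappa''\leq\kappa'$ and $\kappa'+\kappa''$ is comparable to $\lambda_{1}^{2}$, we get $\kappa'$ comparable to $\lambda_{1}^{2}$ and therefore $\kappa''=(n-1)\sigma_{2}^{-2}/\kappa'$ comparable to $\lambda_{1}^{-2}$, all up to constants depending on $A$. For $\lambda_{1}$ large one has $\kappa''<p<\kappa'$, so the ordered eigenvalues are $\kappa_{1}=\kappa'$, $\kappa_{2}=\dots=\kappa_{n-1}=p$, $\kappa_{n}=\kappa''$, which is exactly (1).

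For (2): since $\kappa_{n}=\kappa''$ is distinct from $p$, the unit eigenvector $\xi_{n}$ is orthogonal to the $p$-eigenspace $W^{\perp}$, so $\xi_{n}\in W$; write $\xi_{n}=\alpha a+\beta\mathbf{1}$. Using $Ma=(|a|^{2}+p)a-ps\,\mathbf{1}$ and $M\mathbf{1}=s\,a-(n-1)p\,\mathbf{1}$ with $s:=a^{T}\mathbf{1}=(n-1)\sigma_{1}/\sigma_{2}$, the $\mathbf{1}$-component of $M\xi_{n}=\kappa_{n}\xi_{n}$ gives $\alpha=-\frac{\beta}{s}\big(n-1+\kappa_{n}/p\big)=-\frac{\beta\sigma_{2}}{\sigma_{1}}\big(1+O(\lambda_{1}^{-2})\big)$, as $\kappa_{n}/p=\kappa_{n}\sigma_{2}$ is of order $\lambda_{1}^{-2}$. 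Hence, for each $i$,
\begin{equation*}
\xi_{n}^{i}=\alpha a_{i}+\beta=\beta\Big(1-\tfrac{\sigma_{1}(i)}{\sigma_{1}}\Big)+O(\lambda_{1}^{-2})\beta=\beta\,\tfrac{\eta_{i}}{\sigma_{1}}+O(\lambda_{1}^{-2})\beta,
\end{equation*}
using $\sigma_{1}(i)/\sigma_{1}=O(1)$. Taking $i=1$ and $\eta_{1}/\sigma_{1}\geq\tfrac{1}{2}$ (for $\lambda_{1}$ large) yields $\tfrac{1}{4}|\beta|\leq|\xi_{n}^{1}|\leq|\xi_{n}|=1$, so $|\beta|\leq C$; then, with $\sum_{i\geq2}\eta_{i}^{2}\leq C_{A}$ and $\sigma_{1}\geq C_{A}^{-1}\lambda_{1}$,
\begin{equation*}
\sum_{i=2}^{n}|\xi_{n}^{i}|^{2}\leq C\beta^{2}\,\frac{\sum_{i\geq2}\eta_{i}^{2}}{\sigma_{1}^{2}}+C\lambda_{1}^{-4}\leq\frac{C_{A}}{\lambda_{1}^{2}},
\end{equation*}
which is (2).

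I expect the main obstacle to be the exact algebraic identification $M=aa^{T}+\sigma_{2}^{-1}(I-J)$ and, above all, the clean evaluation $\det M=(n-1)\sigma_{2}^{-n}$: this rigidity, which ultimately reflects that $\sqrt{\sigma_{2}}$ is concave with a one-dimensional degeneracy, is what pins exactly one eigenvalue to size $\lambda_{1}^{-2}$ while keeping the other $n-1$ uniformly bounded below. Everything else is bookkeeping with the a priori bounds of Corollary \ref{Lemma 2} and with the fact, read off the equation itself, that $\sigma_{2}\asymp1$.
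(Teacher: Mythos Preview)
Your argument is correct and takes a genuinely different route from the paper's. Both approaches rest on the same determinant identity $\det(-G^{i\overline{i},j\overline{j}})=(n-1)\sigma_{2}^{-n}$, which the paper obtains by row expansion (Lemma \ref{Lemma 5}) and you obtain via the matrix determinant lemma applied to $M=p(I-J)+aa^{T}$. The difference lies in how the spectral information is extracted. The paper writes $-\sigma_{2}^{2}G^{i\overline{i},j\overline{j}}=\vec{\sigma}^{T}\vec{\sigma}-M_{2}$ and appeals to Weyl's eigenvalue inequalities to bound $\kappa_{1}$ and $\kappa_{i}$ ($i\geq2$) from the separate spectra of the two summands, then recovers $\kappa_{n}$ and the lower bound on $\kappa_{i}$ ($i\leq n-1$) from the determinant. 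For part (2) the paper performs an explicit Gaussian elimination on $\kappa_{n}I+G^{i\overline{i},j\overline{j}}$ (carried out only for $n=4$, with the remark that the general case is analogous) and tracks the size of each entry to read off the eigenvector components.

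Your approach is more structural: recognising that $W=\operatorname{span}\{a,\mathbf{1}\}$ is an $M$-invariant plane and that $M|_{W^{\perp}}=pI$ immediately pins $n-2$ eigenvalues to the exact value $p=\sigma_{2}^{-1}$, so only the trace and determinant of the $2\times2$ block $M|_{W}$ are needed for the remaining two. For (2) this pays off even more: knowing $\xi_{n}\in W$ reduces the eigenvector computation to solving a single linear relation between two coefficients, and the component formula $\xi_{n}^{i}=\beta\eta_{i}/\sigma_{1}+O(\lambda_{1}^{-2})\beta$ makes the smallness of $\xi_{n}^{i}$ for $i\geq2$ transparent via $\sum_{i\geq2}\eta_{i}^{2}\leq C_{A}$. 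Your argument works uniformly in $n$ without the case analysis, and it makes explicit the geometric reason behind the lemma---the single degenerate direction of the Hessian of $\log\sigma_{2}$ lies essentially along $(\eta_{1},\dots,\eta_{n})$. The paper's approach, by contrast, is more hands-on and would generalise mechanically to other operators where such a clean invariant-plane structure is not available.
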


\begin{proof}
Since the proof of Lemma \ref{Lemma 6 and Lemma 7} is a little tedious, we give it in Appendix.
\end{proof}

Now we begin to estimate $II_{3}$.

\begin{lemma}\label{Lemma 4}
For any positive number $\gamma>0$, we have
\begin{equation*}
\begin{split}
&II_{3}\\
 & \leq \frac{C_{A}}{\ve}\sum_{i\geq2}\sum_{q\geq 2}G^{i\overline{i}}\frac{|V_{1}(\tilde{g}_{i\overline{q}})|^{2}}{\lambda_{1}^{4}}
         +\frac{C}{\ve}\sum_{i}G^{i\overline{i}}+2(1-\ve)(1+\gamma)\sum_{i\geq2}G^{i\overline{i}}
          \frac{|V_{1}(\tilde{g}_{{i}\overline{1}})|^{2}}{\lambda_{1}^{2}}\\
       & \quad  +(1-\ve)\left(1+\frac{1}{\gamma}\right)\left(\lambda_{1}-\sum_{\alpha>1}\lambda_{\alpha}\mu_{\alpha}^{2}\right)
          \left(\sum_{i\geq2}\sum_{\alpha>1}\frac{G^{i\overline{i}}}{\lambda_{1}^{2}}\frac{|e_{i}(\varphi_{V_{\alpha}V_{1}})|^2}{\lambda_{1}-\lambda_{\alpha}}\right).
\end{split}
\end{equation*}
\end{lemma}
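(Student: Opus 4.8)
The plan is to start from the pointwise identity \eqref{Lemma 4 equation 1} for $e_i(\varphi_{V_1V_1})$ and to peel off its four groups of terms one at a time by the Cauchy--Schwarz inequality. For $i\ge 2$ write
\begin{equation*}
\begin{split}
e_i(\varphi_{V_1V_1})
&= \underbrace{\sqrt{2}\,\overline{\nu_1}\,V_1(\tilde{g}_{i\overline{1}})}_{=:\,a_i}
 + \underbrace{\sqrt{2}\sum_{q\ge 2}\overline{\nu_q}\,V_1(\tilde{g}_{i\overline{q}})}_{=:\,b_i}\\
&\quad \underbrace{-\sqrt{-1}\sum_{\alpha>1}\mu_\alpha e_i(\varphi_{V_1V_\alpha})}_{=:\,c_i}
 +\,E ,
\end{split}
\end{equation*}
with $|E|\le C\lambda_1$. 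The first step I would take is to split off the ``error block'' $b_i+E$ from the ``main block'' $a_i+c_i$ using Cauchy--Schwarz with parameter \emph{exactly} $\ve$:
\begin{equation*}
|e_i(\varphi_{V_1V_1})|^2 \le (1+\ve)\,|a_i+c_i|^2 + (1+\ve^{-1})\,|b_i+E|^2 .
\end{equation*}
After multiplying by the prefactor $(1-2\ve)$ of $II_3$, the identity $(1-2\ve)(1+\ve)=1-\ve-2\ve^2\le 1-\ve$ keeps the coefficient $1-\ve$ demanded by the statement on the main block, while $(1-2\ve)(1+\ve^{-1})=\ve^{-1}-1-2\ve\le \ve^{-1}$ costs only a factor $C/\ve$ on the error block.

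Next I would bound the error block. By Cauchy--Schwarz in $q$ together with Lemma \ref{Lemma 3} (which gives $|\nu_q|\le C_A\lambda_1^{-1}$ for $q\ge 2$) we get $|b_i|^2\le C_A\lambda_1^{-2}\sum_{q\ge 2}|V_1(\tilde{g}_{i\overline{q}})|^2$, and $|E|^2\le C\lambda_1^2$; dividing by $\lambda_1^2$, multiplying by $G^{i\overline{i}}$ and summing over $i\ge 2$ produces exactly the first two terms $\frac{C_A}{\ve}\sum_{i\ge 2}\sum_{q\ge 2}G^{i\overline{i}}\lambda_1^{-4}|V_1(\tilde{g}_{i\overline{q}})|^2$ and $\frac{C}{\ve}\sum_i G^{i\overline{i}}$. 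For the main block I would apply Cauchy--Schwarz once more with the free parameter $\gamma$, namely $|a_i+c_i|^2\le (1+\gamma)|a_i|^2+(1+\gamma^{-1})|c_i|^2$. Since $|\nu_1|^2\le \sum_q|\nu_q|^2=1$ we have $|a_i|^2\le 2|V_1(\tilde{g}_{i\overline{1}})|^2$, so the $a_i$-part contributes the third term $2(1-\ve)(1+\gamma)\sum_{i\ge 2}G^{i\overline{i}}\lambda_1^{-2}|V_1(\tilde{g}_{i\overline{1}})|^2$. For the $c_i$-part I would insert the weights $\sqrt{\lambda_1-\lambda_\alpha}$ — legitimate because $\lambda_1-\lambda_\alpha>0$ for $\alpha\ge 2$ at $x_0$ by construction of $\Phi$ — and use Cauchy--Schwarz in $\alpha$ to obtain
\begin{equation*}
|c_i|^2\le \Big(\sum_{\alpha>1}\mu_\alpha^2(\lambda_1-\lambda_\alpha)\Big)\Big(\sum_{\alpha>1}\frac{|e_i(\varphi_{V_\alpha V_1})|^2}{\lambda_1-\lambda_\alpha}\Big)
=\Big(\lambda_1-\sum_{\alpha>1}\lambda_\alpha\mu_\alpha^2\Big)\sum_{\alpha>1}\frac{|e_i(\varphi_{V_\alpha V_1})|^2}{\lambda_1-\lambda_\alpha},
\end{equation*}
where the last equality uses $\sum_{\alpha>1}\mu_\alpha^2=1$; dividing by $\lambda_1^2$, multiplying by $G^{i\overline{i}}$ and summing over $i\ge 2$ yields the fourth term and completes the proof.

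The computation is essentially bookkeeping, so there is no single hard step; the delicate point is the order of the successive Cauchy--Schwarz inequalities and, above all, the choice of the very first parameter to be exactly $\ve$. That choice is what lets the ``bad'' quantity $II_3$ be broken into pieces whose coefficients are genuinely $1-\ve$ — so that they can later be defeated by the concavity term $I$ — rather than of the form $1-\ve+O(\ve^2)$, which could not be absorbed; the artificial $\sqrt{\lambda_1-\lambda_\alpha}$ weights in the $c_i$-estimate are dictated precisely by the shape of the positive term $\sum_{\alpha>1}G^{i\overline{i}}|e_i(\varphi_{V_\alpha V_1})|^2/(\lambda_1(\lambda_1-\lambda_\alpha))$ inside $I$. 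One should also keep track of which bounds cost a factor $C_A$: this happens only through Lemma \ref{Lemma 3} inside the error block, so the two main-block coefficients $2(1-\ve)(1+\gamma)$ and $(1-\ve)(1+\gamma^{-1})$ stay independent of $A$, which is what makes the inequality useful once $A$ is eventually fixed.
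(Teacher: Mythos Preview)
Your proposal is correct and follows essentially the same route as the paper's own proof: substitute the identity \eqref{Lemma 4 equation 1}, split the $q=1$ term plus the $c_i$-part from the $q\ge 2$ terms plus $E$ via Cauchy--Schwarz with a parameter of order $\ve$, invoke Lemma~\ref{Lemma 3} on the error block, then apply Cauchy--Schwarz with parameter $\gamma$ on the main block and finally the weighted Cauchy--Schwarz in $\alpha$ to produce the $\lambda_1-\sum_{\alpha>1}\lambda_\alpha\mu_\alpha^2$ factor. Your bookkeeping of the constants --- in particular the observation $(1-2\ve)(1+\ve)\le 1-\ve$ and $(1-2\ve)(1+\ve^{-1})\le \ve^{-1}$ --- is in fact more explicit than the paper's, which writes the first splitting as an equality and leaves the parameter choice implicit.
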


\begin{proof}
By the relation (\ref{Lemma 4 equation 1}) and the Cauchy-Schwarz inequality, we have
\begin{equation}\label{Lemma 4 equation 2}
\begin{split}
&II_{3} \\
    & = (1-2\ve)\sum_{i\geq2}\frac{G^{i\overline{i}}|\sqrt{2}\sum_{q} \overline{\nu_{q}}V_{1}(\tilde{g}_{i\overline{q}})-
          \sqrt{-1}\sum_{\alpha>1}\mu_{\alpha}e_{i}(\varphi_{V_{1}V_{\alpha}})+E|^{2}}{\lambda_{1}^{2}}\\
    & = (1-\ve)\sum_{i\geq2}\frac{G^{i\overline{i}}|\sqrt{2} \overline{\nu_{1}}V_{1}(\tilde{g}_{i\overline{1}})-
          \sqrt{-1}\sum_{\alpha>1}\mu_{\alpha}e_{i}(\varphi_{V_{1}V_{\alpha}})|^{2}}{\lambda_{1}^{2}}\\
    & \quad +\frac{C}{\ve}\sum_{i\geq2}\frac{G^{i\overline{i}}|\sqrt{2}\sum_{q\geq2} \overline{\nu_{q}}V_{1}(\tilde{g}_{i\overline{q}})+E|^{2}}{\lambda_{1}^{2}}  \\
    & \leq(1-\ve)\sum_{i\geq2}\frac{G^{i\overline{i}}|\sqrt{2}\overline{\nu_{1}}V_{1}(\tilde{g}_{i\overline{1}})-\sqrt{-1}\sum_{\alpha>1}\mu_{\alpha}
          e_{i}(\varphi_{V_{1}V_{\alpha}})|^{2}}{\lambda_{1}^{2}}\\
    & \quad +\frac{C_{A}}{\ve}\sum_{i\geq2}
         \sum_{q\geq 2}\frac{G^{i\overline{i}}|V_{1}(\tilde{g}_{i\overline{q}})|^{2}}{\lambda_{1}^{4}}+\frac{C}{\ve}\sum_{i}G^{i\overline{i}}.
\end{split}
\end{equation}
Here we used Lemma \ref{Lemma 3} in  the last inequality.  On the other hand,  by the Cauchy-Schwarz inequality, we have
\begin{equation*}
\begin{split}
&(1-  \ve)\sum_{i\geq 2}\frac{G^{i\overline{i}}|\sqrt{2}\overline{\nu_{1}}V_{1}(\tilde{g}_{{i}\overline{1}})-\sum_{\alpha>1}\mu_{\alpha}
                 e_{i}(\varphi_{V_{1}V_{\alpha}})|^{2}}{\lambda_{1}^{2}}\\
              &\leq  2(1-\ve)(1+\gamma)\sum_{i\geq2}\frac{G^{i\overline{i}}|V_{1}(\tilde{g}_{{i}\overline{1}})|^{2}}{\lambda_{1}^{2}}\\
             & +(1-\ve)\left(1+\frac{1}{\gamma}\right)\sum_{i\geq2}\frac{G^{i\overline{i}}|\sum_{\alpha>1}\mu_{\alpha}e_{i}
                 (\varphi_{V_{1}V_{\alpha}})|^{2}}{\lambda_{1}^{2}},
\end{split}
\end{equation*}
and
\begin{equation*}
\begin{split}
\left|\sum_{\alpha>1}\mu_{\alpha} e_{i}(\varphi_{V_{\alpha}V_{1}})\right|^{2}
& \leq\left(\sum_{\alpha>1}(\lambda_{1}-\lambda_{\alpha})\mu_{\alpha}^{2}\right)
  \left(\sum_{\alpha>1}\frac{|e_{i}(\varphi_{V_{\alpha}V_{1}})|^2}{\lambda_{1}-\lambda_{\alpha}}\right)\\
& =\left(\lambda_{1}-\sum_{\alpha>1}\lambda_{\alpha}\mu_{\alpha}^{2}\right)
  \left(\sum_{\alpha>1}\frac{|e_{i}(\varphi_{V_{\alpha}V_{1}})|^2}{\lambda_{1}-\lambda_{\alpha}}\right).
\end{split}
\end{equation*}
 Thus
\begin{equation*}
\begin{split}
& (1-\ve)\sum_{i\geq2}\frac{G^{i\overline{i}}|\sqrt{2}\overline{\nu_{1}}V_{1}(\tilde{g}_{i\overline{1}})-\sqrt{-1}\sum_{\alpha>1}\mu_{\alpha}
e_{i}(\varphi_{V_{1}V_{\alpha}})|^{2}}{\lambda_{1}^{2}}\\
  \leq ~~&  2(1-\ve)(1+\gamma)\sum_{i\geq2}\frac{G^{i\overline{i}}|V_{1}(\tilde{g}_{{i}\overline{1}})|^{2}}{\lambda_{1}^{2}}\\
    &  +(1-\ve)\left(1+\frac{1}{\gamma}\right)\left(\lambda_{1}-\sum_{\alpha>1}\lambda_{\alpha}\mu_{\alpha}^{2}\right)
       \left(\sum_{i\geq2}\sum_{\alpha>1}\frac{G^{i\overline{i}}}{\lambda_{1}^{2}}\frac{|e_{i}(\varphi_{V_{\alpha}V_{1}})|^2}{\lambda_{1}-\lambda_{\alpha}}\right).
\end{split}
\end{equation*}
Inserting the above inequality  into (\ref{Lemma 4 equation 2}), the lemma is proved.
\end{proof}

\begin{lemma}\label{Lemma 8}
At $x_{0}$, if $\lambda_{1}\geq\frac{C_{A}}{\ve}$, then we have
\begin{equation*}
-\frac{G^{i\overline{i},k\overline{k}}V_{1}(\tilde{g}_{k\overline{k}})V_{1}(\tilde{g}_{i\overline{i}})}{\lambda_{1}}
\geq \frac{C_{A}}{\ve}\sum_{i\geq2}G^{i\overline{i}}\frac{|V_{1}(\tilde{g}_{i\overline{i}})|^{2}}{\lambda_{1}^{4}}.
\end{equation*}
\end{lemma}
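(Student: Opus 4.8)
The plan is to diagonalize the quadratic form $-G^{i\bar i,k\bar k}V_1(\tilde g_{k\bar k})V_1(\tilde g_{i\bar i})$ and to feed in the spectral data for $(-G^{i\bar i,k\bar k})$ supplied by Lemma \ref{Lemma 6 and Lemma 7}. Since $\tilde g_{i\bar i}=\chi_{i\bar i}+\vp_{i\bar i}$ is real and $V_1$ is a real vector field, the numbers $w_i:=V_1(\tilde g_{i\bar i})$ are real, and $(-G^{i\bar i,k\bar k})$ is a real symmetric non-negative definite matrix with eigenvalues $\kappa_1\ge\dots\ge\kappa_n$ and orthonormal eigenvectors $\xi_1,\dots,\xi_n$. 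Writing $w=(w_1,\dots,w_n)$ and decomposing $w=(w\cdot\xi_n)\,\xi_n+w^\perp$ with $w^\perp\perp\xi_n$ (so $|w^\perp|^2=\sum_{j\le n-1}(w\cdot\xi_j)^2$), one has
\[
-G^{i\bar i,k\bar k}V_1(\tilde g_{k\bar k})V_1(\tilde g_{i\bar i})=\sum_{j=1}^{n}\kappa_j(w\cdot\xi_j)^2\ \ge\ \kappa_n(w\cdot\xi_n)^2+C_A^{-1}|w^\perp|^2\ \ge\ C_A^{-1}\lambda_1^{-2}(w\cdot\xi_n)^2+C_A^{-1}|w^\perp|^2,
\]
using Lemma \ref{Lemma 6 and Lemma 7}(1), i.e. $\kappa_j\ge C_A^{-1}$ for $j\le n-1$ and $\kappa_n\ge C_A^{-1}\lambda_1^{-2}$.

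Next I would bound the right-hand side of the claimed inequality from above. For $i\ge2$ we have $w_i=(w\cdot\xi_n)\xi_n^i+(w^\perp)_i$, so by Lemma \ref{Lemma 6 and Lemma 7}(2), $\sum_{i\ge2}|\xi_n^i|^2\le C_A\lambda_1^{-2}$, together with $\sum_{i\ge2}(w^\perp)_i^2\le|w^\perp|^2$,
\[
\sum_{i\ge2}w_i^2\ \le\ 2(w\cdot\xi_n)^2\sum_{i\ge2}|\xi_n^i|^2+2\sum_{i\ge2}(w^\perp)_i^2\ \le\ C_A\lambda_1^{-2}(w\cdot\xi_n)^2+2|w^\perp|^2.
\]
Moreover, for $i\ge2$ Lemma \ref{Lemma 9} gives $(1-\ve)G^{i\bar i}\le\frac{1}{2\sigma_2}\bigl(\lambda_1+\sum_{\alpha>1}\lambda_\alpha\mu_\alpha^2\bigr)\le\lambda_1/\sigma_2$, and the equation (\ref{General 2-nd Hessian equation}) forces $\sigma_2=\binom{n}{2}e^{F}$ to be bounded below by a positive constant, so $G^{i\bar i}\le C\lambda_1$. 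Combining the last two facts,
\[
\frac{C_A}{\ve}\sum_{i\ge2}G^{i\bar i}\frac{w_i^2}{\lambda_1^4}\ \le\ \frac{C_A}{\ve}\left(\frac{(w\cdot\xi_n)^2}{\lambda_1^5}+\frac{|w^\perp|^2}{\lambda_1^3}\right),
\]
the constant $C_A$ on the right having absorbed the numerical factors.

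To finish, divide the first display by $\lambda_1$ and compare it with the last display: it suffices that $\kappa_n\lambda_1^{-1}(w\cdot\xi_n)^2\ge\frac{C_A}{\ve}\lambda_1^{-5}(w\cdot\xi_n)^2$ and $C_A^{-1}\lambda_1^{-1}|w^\perp|^2\ge\frac{C_A}{\ve}\lambda_1^{-3}|w^\perp|^2$, and by Lemma \ref{Lemma 6 and Lemma 7}(1) both reduce to $\lambda_1^2\ge C_A^2/\ve$, which holds under the standing hypothesis $\lambda_1\ge C_A/\ve$ (after enlarging $C_A$, using $\ve\le\frac12$). Within this proof everything is Cauchy-Schwarz and bookkeeping; the one genuinely delicate input is Lemma \ref{Lemma 6 and Lemma 7}(2), whose proof is deferred to the Appendix: the unique small eigendirection $\xi_n$ of $(-G^{i\bar i,k\bar k})$ is nearly parallel to $e_1$, so its a priori large contribution $(w\cdot\xi_n)^2$ enters the right-hand side only with the decisive gain $\sum_{i\ge2}|\xi_n^i|^2\le C_A\lambda_1^{-2}$. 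Without this concentration the weight $\kappa_n\sim\lambda_1^{-2}$ would be too small to absorb $\sum_{i\ge2}G^{i\bar i}w_i^2/\lambda_1^4$, since $G^{i\bar i}$ can be of size $\sim\lambda_1$.
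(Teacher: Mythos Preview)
Your proof is correct and follows essentially the same route as the paper: diagonalize the quadratic form $-G^{i\bar i,k\bar k}V_1(\tilde g_{i\bar i})V_1(\tilde g_{k\bar k})$ in the eigenbasis $\xi_1,\dots,\xi_n$ of $(-G^{i\bar i,k\bar k})$, invoke Lemma~\ref{Lemma 6 and Lemma 7} for the sizes of $\kappa_q$ and of $\sum_{i\ge2}|\xi_n^i|^2$, use $G^{i\bar i}\le C\lambda_1$, and compare term by term under $\lambda_1\ge C_A/\ve$. The only cosmetic differences are that you group the $q\le n-1$ contributions into $w^\perp$ (the paper keeps them separate as $\sum_{q\le n-1}\kappa_q|\tau_q|^2$ with $\tau_q=w\cdot\xi_q$), and you appeal to Lemma~\ref{Lemma 9} for $G^{i\bar i}\le C\lambda_1$ whereas the paper reads this off directly from $G^{i\bar i}=\sigma_1(i)/\sigma_2$ together with Corollary~\ref{Lemma 2}.
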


\begin{proof}
Recall that $\xi_{i}=(\xi_{i}^{1},\xi_{i}^{2},\cdots,\xi_{i}^{n})$ are the $g$-unit eigenvector corresponding to $\kappa_{i}$ for $i=1,2,\cdots,n$.
Then there are  complex numbers $\tau_{1},\tau_{2},\cdots,\tau_{n}$ such that
\begin{equation*}
V_{1}(\tilde{g}_{i\overline{i}})=\sum_{q=1}^{n}\tau_{q}\xi_{q}^{i} ~ \text{~for $i=1,2,\cdots,n$.}
\end{equation*}
Since
\begin{equation*}
G^{i\overline{i}}=\frac{\sigma_{1}(i)}{\sigma_{2}}\leq C\lambda_{1} \text{~for $i=1,2,\cdots,n$},
\end{equation*}
  we derive
\begin{equation}\label{Lemma 8 equation 4}
-\frac{C_{A}}{\ve}\sum_{i\geq2}G^{i\overline{i}}\frac{|V_{1}(\tilde{g}_{i\overline{i}})|^{2}}{\lambda_{1}^{4}}
\geq -\sum_{i\geq2}\sum_{q=1}^{n}\frac{C_{A}}{\ve\lambda_{1}^{3}}|\tau_{q}|^{2}|\xi_{q}^{i}|^{2}.
\end{equation}
Also we have
\begin{equation}\label{Lemma 8 equation 5}
-\frac{G^{i\overline{i},k\overline{k}}V_{1}(\tilde{g}_{k\overline{k}})V_{1}(\tilde{g}_{i\overline{i}})}{\lambda_{1}}
=\frac{1}{\lambda_{1}}\sum_{q=1}^{n}\kappa_{q}|\tau_{q}|^{2}.
\end{equation}
By (\ref{Lemma 8 equation 4}) and (\ref{Lemma 8 equation 5}), we obtain
\begin{equation}\label{Lemma 8 equation 6}
\begin{split}
&-\frac{C_{A}}{\ve}\sum_{i\geq2}  G^{i\overline{i}}\frac{|V_{1}(\tilde{g}_{i\overline{i}})|^{2}}{\lambda_{1}^{4}}
  -\frac{G^{i\overline{i},k\overline{k}}V_{1}(\varphi_{k\overline{k}})V_{1}(\varphi_{i\overline{i}})}{\lambda_{1}}\\
& \ge -\sum_{i\geq2}\sum_{q=1}^{n}\frac{C_{A}}{\ve\lambda_{1}^{3}}|\tau_{q}|^{2}|\xi_{q}^{i}|^{2}+\frac{1}{\lambda_{1}}
\sum_{q=1}^{n}\kappa_{q}|\tau_{q}|^{2}\\
&  = -\sum_{i\geq2}\frac{C_{A}}{\ve\lambda_{1}^{3}}|\tau_{n}|^{2}|\xi_{n}^{i}|^{2}+\frac{1}{\lambda_{1}}\kappa_{n}|\tau_{n}|^{2} \\
& +\sum_{q=1}^{n-1}\left( -\sum_{i\geq2}\frac{C_{A}}{\ve\lambda_{1}^{3}}|\tau_{q}|^{2}|\xi_{q}^{i}|^{2}+\frac{1}{\lambda_{1}}\kappa_{q}|\tau_{q}|^{2} \right).
\end{split}
\end{equation}
Moreover, by $\lambda_{1}\geq\frac{C_{A}}{\ve}$ and Lemma \ref{Lemma 6 and Lemma 7}, we see
\begin{equation*}
\begin{split}
-\sum_{i\geq2}\frac{C_{A}}{\ve\lambda_{1}^{3}}|\tau_{n}|^{2}|\xi_{n}^{i}|^{2}+\frac{1}{\lambda_{1}}\kappa_{n}|\tau_{n}|^{2}
& \geq\left(\frac{1}{C_{A}\lambda_{1}^{3}}-\frac{C_{A}}{\ve\lambda_{1}^{5}}\right)|\tau_{n}|^{2}
  \geq 0,\\
\sum_{q=1}^{n-1}\left( -\sum_{i\geq2}\frac{C_{A}}{\ve\lambda_{1}^{3}}|\tau_{q}|^{2}|\xi_{q}^{i}|^{2}+\frac{1}{\lambda_{1}}\kappa_{q}|\tau_{q}|^{2} \right)
& \geq\sum_{q=1}^{n-1}\left(\frac{1}{C_{A}\lambda_{1}}-\frac{C_{A}}{\ve\lambda_{1}^{3}}\right)|\tau_{q}|^{2}
  \geq 0.
\end{split}
\end{equation*}
 Thus Lemma \ref{Lemma 8} follows from the above inequalities and (\ref{Lemma 8 equation 6}).
\end{proof}

Lemma  \ref{Lemma 8} gives an estimate for the term   $\frac{C_{A}}{\ve}\sum_{i\geq2}G^{i\overline{i}}\frac{|V_{1}(\tilde{g}_{i\overline{i}})|^{2}}{\lambda_{1}^{4}} $ in Lemma \ref{Lemma 4}. We need to deal with other terms there.
By the definition of $\lambda_{\alpha}$ and $\mu_{\alpha}$, it is clear that $\lambda_{1}-\sum_{\alpha>1}\lambda_{\alpha}\mu_{\alpha}^{2}>0$. From Lemma \ref{Lemma 9}, we  see $\lambda_{1}+\sum_{\alpha>1}\lambda_{\alpha}\mu_{\alpha}^{2}>0$. Recalling that the constant $\gamma>0$ in Lemma \ref{Lemma 4} is arbitrary, now we choose
\begin{equation*}
\gamma=\frac{\lambda_{1}-\sum_{\alpha>1}\lambda_{\alpha}\mu_{\alpha}^{2}}{\lambda_{1}+\sum_{\alpha>1}\lambda_{\alpha}\mu_{\alpha}^{2}}.
\end{equation*}
Thus by  Lemma \ref{Lemma 9} and the definition of $\gamma$, we obtain
\begin{equation}\label{Lemma 10 equation 2}
2\sum_{i\geq 2}\frac{|V_{1}(\tilde{g}_{i\overline{1}})|^{2}}{\sigma_{2}\lambda_{1}}
\geq 2(1-\ve)(1+\gamma)\sum_{i\geq2}G^{i\overline{i}}\frac{|V_{1}(\tilde{g}_{{i}\overline{1}})|^{2}}{\lambda_{1}^{2}}
\end{equation}
and
\begin{equation}\label{Lemma 10 equation 3}
\begin{split}
&   (2-2\ve)\sum_{i\geq2}\sum_{\alpha>1}\frac{G^{i\overline{i}}|e_{i}(\varphi_{V_{\alpha}V_{1}})|^{2}}{\lambda_{1}(\lambda_{1}-\lambda_{\alpha})}\\
= ~~& (1-\ve)\left(1+\frac{1}{\gamma}\right)\left(\lambda_{1}-\sum_{\alpha>1}\lambda_{\alpha}\mu_{\alpha}^{2}\right)
\sum_{i\geq2}\sum_{\alpha>1}\frac{G^{i\overline{i}}}{\lambda_{1}^{2}}\frac{|e_{i}(\varphi_{V_{\alpha}V_{1}})|^2}{\lambda_{1}-\lambda_{\alpha}}.
\end{split}
\end{equation}

\begin{lemma}\label{Lemma 10}
At $x_{0}$, if $\lambda_{1}\geq\frac{C_{A}}{\ve}$, we have
\begin{equation*}
II_{3}\leq I+\frac{C}{\ve}\sum_{i}G^{i\overline{i}}.
\end{equation*}
\end{lemma}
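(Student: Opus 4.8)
The plan is to insert the estimate of Lemma~\ref{Lemma 4}, with the value of $\gamma$ just fixed, and to match its four summands one at a time against the three ``good'' pieces of
\begin{equation*}
I=(2-\ve)\sum_{\alpha>1}\frac{G^{i\overline{i}}|e_{i}(\varphi_{V_{\alpha}V_{1}})|^{2}}{\lambda_{1}(\lambda_{1}-\lambda_{\alpha})}
  -\frac{G^{k\overline{l},l\overline{k}}|V_{1}(\tilde{g}_{k\overline{l}})|^{2}}{\lambda_{1}}
  -\frac{G^{i\overline{i},k\overline{k}}V_{1}(\tilde{g}_{k\overline{k}})V_{1}(\tilde{g}_{i\overline{i}})}{\lambda_{1}},
\end{equation*}
up to an error of the form $\frac{C}{\ve}\sum_{i}G^{i\overline{i}}$. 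Recall from (\ref{Definition of Fijkl}) that $-G^{k\overline{l},l\overline{k}}=\sigma_{2}^{-1}$, so the middle term of $I$ equals $\frac{1}{\sigma_{2}\lambda_{1}}\sum_{k\neq l}|V_{1}(\tilde{g}_{k\overline{l}})|^{2}\geq0$; the matrix $(-G^{i\overline{i},k\overline{k}})$ is nonnegative, so the last term of $I$ is $\geq0$; and every summand in the first term of $I$ is $\geq0$ since $\lambda_{1}>\lambda_{\alpha}$ for $\alpha>1$ after the perturbation. The choice of $\gamma$ is made precisely so that the two third-order contributions appearing in (\ref{Lemma 4 equation 1}) split exactly between these pieces of $I$.

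By Lemma~\ref{Lemma 4}, $II_{3}$ is bounded by a sum of four terms, of which $\frac{C}{\ve}\sum_{i}G^{i\overline{i}}$ is already of the desired form. The term carrying $|e_{i}(\varphi_{V_{\alpha}V_{1}})|^{2}$ equals, by (\ref{Lemma 10 equation 3}) (which uses Lemma~\ref{Lemma 9} and the definition of $\gamma$), $(2-2\ve)\sum_{i\geq2}\sum_{\alpha>1}\frac{G^{i\overline{i}}|e_{i}(\varphi_{V_{\alpha}V_{1}})|^{2}}{\lambda_{1}(\lambda_{1}-\lambda_{\alpha})}$, which is bounded by the first term of $I$ because $2-2\ve\leq2-\ve$ and the summands are nonnegative. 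The term carrying $|V_{1}(\tilde{g}_{i\overline{1}})|^{2}$ is, by (\ref{Lemma 10 equation 2}), bounded by $\frac{2}{\sigma_{2}\lambda_{1}}\sum_{i\geq2}|V_{1}(\tilde{g}_{i\overline{1}})|^{2}$, which is exactly the contribution of the index pairs $(1,i)$ and $(i,1)$ with $i\geq2$ to the middle term of $I$.

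It remains to absorb the term $\frac{C_{A}}{\ve}\sum_{i\geq2}\sum_{q\geq2}G^{i\overline{i}}\frac{|V_{1}(\tilde{g}_{i\overline{q}})|^{2}}{\lambda_{1}^{4}}$ of Lemma~\ref{Lemma 4}, which I would split according to whether $q=i$ or $q\neq i$. The diagonal part $\frac{C_{A}}{\ve}\sum_{i\geq2}G^{i\overline{i}}\frac{|V_{1}(\tilde{g}_{i\overline{i}})|^{2}}{\lambda_{1}^{4}}$ is dominated by the last term of $I$ by Lemma~\ref{Lemma 8}. For the off-diagonal part, one uses $G^{i\overline{i}}\sigma_{2}=\sigma_{1}(i)\leq C_{A}\lambda_{1}$ (which follows from $\sum_{i\geq2}|\eta_{i}|\leq C_{A}$ in Corollary~\ref{Lemma 2} and $\eta_{1}\leq C\lambda_{1}$) together with the standing hypothesis $\lambda_{1}\geq\frac{C_{A}}{\ve}$ to get $\frac{C_{A}}{\ve}\frac{G^{i\overline{i}}}{\lambda_{1}^{4}}\leq\frac{1}{\sigma_{2}\lambda_{1}}$; hence the off-diagonal part is at most $\frac{1}{\sigma_{2}\lambda_{1}}\sum_{i,q\geq2,\,i\neq q}|V_{1}(\tilde{g}_{i\overline{q}})|^{2}$, which is precisely the remaining contribution to the middle term of $I$. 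Adding up the four estimates yields $II_{3}\leq I+\frac{C}{\ve}\sum_{i}G^{i\overline{i}}$.

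The only delicate point is this final bookkeeping --- checking that the middle term of $I$, namely $\frac{1}{\sigma_{2}\lambda_{1}}\sum_{k\neq l}|V_{1}(\tilde{g}_{k\overline{l}})|^{2}$, has enough room to host both the contribution of (\ref{Lemma 10 equation 2}) and the off-diagonal part of the first term of Lemma~\ref{Lemma 4}. This works because those two contributions occupy disjoint index pairs, $\{1,i\}$ versus $\{i,q\}$ with $i,q\geq2$, and because the elementary bounds $\eta_{1}\leq C\lambda_{1}$ and $\sigma_{1}(i)\leq C_{A}\lambda_{1}$, combined with $\lambda_{1}\geq C_{A}/\ve$, force the coefficient $\frac{C_{A}}{\ve\lambda_{1}^{4}}G^{i\overline{i}}$ in the off-diagonal part to be no larger than $\frac{1}{\sigma_{2}\lambda_{1}}$. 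All the other steps are direct substitutions via Lemmas~\ref{Lemma 8} and \ref{Lemma 9} and the identities (\ref{Lemma 10 equation 2})--(\ref{Lemma 10 equation 3}).
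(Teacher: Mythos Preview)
Your proof is correct and follows essentially the same route as the paper's: you split the first term of Lemma~\ref{Lemma 4} into diagonal and off-diagonal parts, absorb the diagonal part into $-G^{i\overline{i},k\overline{k}}V_{1}(\tilde{g}_{i\overline{i}})V_{1}(\tilde{g}_{k\overline{k}})/\lambda_{1}$ via Lemma~\ref{Lemma 8}, absorb the off-diagonal part and the $|V_{1}(\tilde{g}_{i\overline{1}})|^{2}$ term into disjoint index blocks of $-G^{k\overline{l},l\overline{k}}|V_{1}(\tilde{g}_{k\overline{l}})|^{2}/\lambda_{1}$ using (\ref{Lemma 10 equation 2}) and the bound $G^{i\overline{i}}\leq C_{A}\lambda_{1}/\sigma_{2}$, and handle the $|e_{i}(\varphi_{V_{\alpha}V_{1}})|^{2}$ term via (\ref{Lemma 10 equation 3}). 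The paper packages the same steps into the single inequality (\ref{Lemma 10 equation 5}) before invoking Lemmas~\ref{Lemma 4} and \ref{Lemma 8}, but the content is identical.
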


\begin{proof}
By the definition of $G^{i\overline{k},k\overline{i}}$ (see (\ref{Definition of Fijkl})), it is clear that
\begin{equation*}
-\sum_{k\neq l}\frac{G^{k\overline{l},l\overline{k}}|V_{1}(\tilde{g}_{k\overline{l}})|^{2}}{\lambda_{1}}
= \sum_{k\neq l}\frac{|V_{1}(\tilde{g}_{k\overline{l}})|^{2}}{\sigma_{2}\lambda_{1}}
= 2\sum_{i\geq 2}\frac{|V_{1}(\tilde{g}_{i\overline{1}})|^{2}}{\sigma_{2}\lambda_{1}}
    +\sum_{i\geq2}\sum_{q\geq2,q\neq i}\frac{|V_{1}(\tilde{g}_{i\overline{q}})|^{2}}{\sigma_{2}\lambda_{1}}.
\end{equation*}
On the other hand, by (\ref{Lemma 2 equation 2}),  we see
\begin{equation*}\label{Lambda1 and eta1}
C_{A}^{-1}\eta_{1}\leq \lambda_{1}\leq C_{A}\eta_{1}.
\end{equation*}
Note  $\lambda_{1}\geq\frac{C_{A}}{\ve}$.  Then  by Lemma \ref{Lemma 2}, we have
\begin{equation*}
G^{i\overline{i}}=\frac{\sigma_{1}(i)}{\sigma_{2}}\leq \frac{\eta_{1}+C}{\sigma_{2}} \leq \frac{\ve\lambda_{1}^{3}}{C_{A}\sigma_{2}} \text{~for $i\geq 2$.}
\end{equation*}
This implies
\begin{equation*}
\sum_{i\geq2}\sum_{q\geq2,q\neq i}\frac{|V_{1}(\tilde{g}_{i\overline{q}})|^{2}}{\sigma_{2}\lambda_{1}}
\geq \frac{C_{A}}{\ve}\sum_{i\geq2}\sum_{q\geq2,q\neq i}G^{i\overline{i}}\frac{|V_{1}(\tilde{g}_{i\overline{q}})|^{2}}{\lambda_{1}^{4}}.
\end{equation*}
Hence by (\ref{Lemma 10 equation 2}), we deduce
\begin{align}\label{lemma4.4-1}
&-\sum_{k\neq l}\frac{G^{k\overline{l},l\overline{k}}|V_{1}(\tilde{g}_{k\overline{l}})|^{2}}{\lambda_{1}}\notag\\
&\ge 2(1-\ve)(1+\gamma)\sum_{i\geq2}G^{i\overline{i}}\frac{|V_{1}(\tilde{g}_{{i}\overline{1}})|^{2}}{\lambda_{1}^{2}}
+\frac{C_{A}}{\ve}\sum_{i\geq2}\sum_{q\geq2,q\neq i}G^{i\overline{i}}\frac{|V_{1}(\tilde{g}_{i\overline{q}})|^{2}}{\lambda_{1}^{4}}.
\end{align}
By (\ref{Lemma 10 equation 3}) and (\ref{lemma4.4-1}), we see
\begin{equation}\label{Lemma 10 equation 5}
\begin{split}
& (2-\ve)\sum_{\alpha>1}\frac{G^{i\overline{i}}|e_{i}(\varphi_{V_{\alpha}V_{1}})|^{2}}{\lambda_{1}(\lambda_{1}-\lambda_{\alpha})}
      -\sum_{k\neq l}\frac{G^{k\overline{l},l\overline{k}}|V_{1}(\tilde{g}_{k\overline{l}})|^{2}}{\lambda_{1}}\\
\geq~~& 2(1-\ve)(1+\gamma)\sum_{i\geq2}G^{i\overline{i}}\frac{|V_{1}(\tilde{g}_{{i}\overline{1}})|^{2}}{\lambda_{1}^{2}}
+\frac{C_{A}}{\ve}\sum_{i\geq2}\sum_{q\geq2,q\neq i}G^{i\overline{i}}\frac{|V_{1}(\tilde{g}_{i\overline{q}})|^{2}}{\lambda_{1}^{4}}\\
&  +(1-\ve)\left(1+\frac{1}{\gamma}\right)\left(\lambda_{1}-\sum_{\alpha>1}\lambda_{\alpha}\mu_{\alpha}^{2}\right)
\sum_{i\geq2}\sum_{\alpha>1}\frac{G^{i\overline{i}}}{\lambda_{1}^{2}}\frac{|e_{i}(\varphi_{V_{\alpha}V_{1}})|^2}{\lambda_{1}-\lambda_{\alpha}}.
\end{split}
\end{equation}
Then Lemma \ref{Lemma 10} follows from Lemma \ref{Lemma 4}, Lemma \ref{Lemma 8} and (\ref{Lemma 10 equation 5}).
\end{proof}

Combining Lemma \ref{II1 and II2} and Lemma \ref{Lemma 10}, we finally obtain

\begin{proposition}\label{Lemma 11}
If $\lambda_{1}\geq \frac{C_{A}}{\ve}$, we have
\begin{equation*}
\begin{split}
II & = II_{1}+II_{2}+II_{3}\\
& \leq I+12\ve A^{2}e^{-2A\vp}G^{i\overline{i}}|e_{i}(\varphi)|^{2}+2(h')^{2}G^{i\overline{i}}|e_{i}(|{\partial\varphi}|_{g}^{2})|^{2}
       +\frac{C}{\ve}\sum_{i}G^{i\ol{i}}+C_{A}.
\end{split}
\end{equation*}
\end{proposition}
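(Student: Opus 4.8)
The plan is to prove this by simply assembling the three bounds already established, one for each summand in the decomposition $II = II_1 + II_2 + II_3$. Concretely, $II_1$ and $II_2$ are handled by Lemma \ref{II1 and II2}, and $II_3$ by Lemma \ref{Lemma 10}. Since we are assuming $\lambda_1 \geq \frac{C_A}{\ve}$ at $x_0$, every auxiliary estimate those two lemmas depend on (Corollary \ref{Lemma 2}, Lemma \ref{Lemma 3}, Lemma \ref{Lemma 9}, Lemma \ref{Lemma 6 and Lemma 7}, Lemma \ref{Lemma 8}) applies, so there is nothing to verify before quoting them.

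First I would write down, from Lemma \ref{II1 and II2},
\[
II_1 \leq C_A + 2(h')^2 G^{1\overline{1}}|e_1(|\partial\vp|_g^2)|^2,
\]
\[
II_2 \leq 12\ve A^2 e^{-2A\vp}\sum_{i\geq 2} G^{i\overline{i}}|e_i(\vp)|^2 + 2(h')^2\sum_{i\geq 2} G^{i\overline{i}}|e_i(|\partial\vp|_g^2)|^2,
\]
and, from Lemma \ref{Lemma 10}, $II_3 \leq I + \frac{C}{\ve}\sum_i G^{i\overline{i}}$. Adding the three inequalities immediately bounds $II$ by the sum of the three right-hand sides; what is left is a recombination of like terms.

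The recombination is routine: the term $2(h')^2 G^{1\overline{1}}|e_1(|\partial\vp|_g^2)|^2$ coming from $II_1$ supplies exactly the $i=1$ contribution missing in the sum over $i\geq 2$ coming from $II_2$, so the two merge into $2(h')^2 G^{i\overline{i}}|e_i(|\partial\vp|_g^2)|^2$ in the Einstein convention; the sum over $i\geq 2$ in the $A^2$-term is enlarged to a sum over all $i$ using $G^{i\overline{i}}\geq 0$, producing $12\ve A^2 e^{-2A\vp} G^{i\overline{i}}|e_i(\vp)|^2$; and the leftover scalars collect as $\frac{C}{\ve}\sum_i G^{i\overline{i}} + C_A$. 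This gives precisely the stated inequality. The genuinely hard part is not in this statement at all --- it was already absorbed into Lemma \ref{Lemma 4}, Lemma \ref{Lemma 8} and, above all, the eigenvalue and eigenvector estimates for $(-G^{i\overline{i},j\overline{j}})$ in Lemma \ref{Lemma 6 and Lemma 7}; here the only point requiring a moment's care is tracking which index ranges the various $(h')^2$ pieces cover, so that they fuse correctly into the full sum $\sum_i$.
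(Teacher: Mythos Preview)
Your proposal is correct and matches the paper's approach exactly: the paper itself simply states that the proposition follows by ``Combining Lemma \ref{II1 and II2} and Lemma \ref{Lemma 10}'' without writing out any further details. Your recombination of the $(h')^2$ terms and enlargement of the $A^2$-sum is precisely the bookkeeping the paper leaves implicit.
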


By Proposition \ref{LQ Lemma} and Proposition \ref{Lemma 11}, we can complete the proof of  Theorem \ref{Generalized second order estimate}.

\begin{proof}[Proof of Theorem \ref{Generalized second order estimate}]
Without loss of generality, we assume that $\sup_{M}\vp=0$.
Then by  Proposition \ref{LQ Lemma} and Proposition \ref{Lemma 11},  we see that at $x_{0}$ there exists a uniform constant $C_{1}$ such that
\begin{equation}\label{main-inequality-2}
\begin{split}
0 & \geq  \left(\ve_{0}Ae^{-A\varphi}-\frac{C_{1}}{\ve}\right)\sum_{i}G^{i\overline{i}}+\frac{h'}{4}\sum_{k}G^{i\overline{i}}(|e_{i}e_{k}(\varphi)|^{2}+
          |e_{i}\overline{e}_{k}(\varphi)|^{2})\\
  &\quad  +(A^{2}e^{-A\varphi}-12\ve A^{2}e^{-2A\varphi})G^{i\overline{i}}|e_{i}(\varphi)|^{2}-C_{1}Ae^{-A\varphi}.
\end{split}
\end{equation}
Choose $A=12C_{1}+1$ and $\ve=\frac{e^{A\vp(x_{0})}}{12}\in(0,\frac{1}{12}]$ so that
\begin{equation*}
Ae^{-A\vp}-\frac{C_{1}}{\ve}\geq 1 \text{~and~}
A^{2}e^{-A\vp}-12\ve A^{2}e^{-2A(\vp)}\geq0.
\end{equation*}
We get from (\ref{main-inequality-2}),
\begin{equation*}
\sum_{i}G^{i\overline{i}}+\frac{h'}{4}\sum_{k}G^{i\overline{i}}(|e_{i}e_{k}(\varphi)|^{2}+|e_{i}\overline{e}_{k}(\varphi)|^{2})\leq C.
\end{equation*}
As a consequence,  $\sum_{i}G^{i\overline{i}}\leq C$. Combining this with Maclaurin's inequality, we obtain (for more details, cf. \cite[Lemma 2.2]{HMW10}),
\begin{equation*}
G^{i\overline{i}}\geq C^{-1} \text{~for $i=1,2,\cdots,n$.}
\end{equation*}
Thus we get
\begin{equation*}
\lambda_{1}^{2}\leq C \sum_{k}G^{i\overline{i}}(|e_{i}e_{k}(\varphi)|^{2}+|e_{i}\overline{e}_{k}(\varphi)|^{2})\leq C,
\end{equation*}
as required.
\end{proof}

\section{Appendix}
In this appendix,  we give a  proof of Lemma \ref{Lemma 6 and Lemma 7}. Here we use the same notations in Section 4. We need the following algebraic  Lemma for $\sigma_2$ polynomial function.

\begin{lemma}\label{Lemma 5}
At $x_{0}$, we have
\begin{equation*}
\det(-G^{i\overline{i},j\overline{j}})=(n-1)\sigma_{2}^{-n}
\end{equation*}
\end{lemma}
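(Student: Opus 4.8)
The plan is to reduce the determinant to an elementary $2\times 2$ computation via the matrix determinant lemma, exploiting the fact that $(-G^{i\overline{i},j\overline{j}})$ is a rank-two perturbation of a multiple of the identity. First, using the explicit formula (\ref{Definition of Fijkl}) for $G^{i\overline{i},k\overline{k}}$, observe that the entries of $M:=(-G^{i\overline{i},j\overline{j}})$ can be written uniformly (checking the cases $i=k$ and $i\neq k$ against (\ref{Definition of Fijkl})) as
$$M_{ik}=\sigma_{2}^{-1}\delta_{ik}-\sigma_{2}^{-1}+\sigma_{2}^{-2}\sigma_{1}(i)\sigma_{1}(k).$$
Hence, with $\mathbf{1}=(1,\dots,1)^{T}$ and $v=(\sigma_{1}(1),\dots,\sigma_{1}(n))^{T}$, one has
$$M=\sigma_{2}^{-1}\bigl(I-\mathbf{1}\mathbf{1}^{T}+\sigma_{2}^{-1}vv^{T}\bigr),$$
so that $\det M=\sigma_{2}^{-n}\det\bigl(I-\mathbf{1}\mathbf{1}^{T}+\sigma_{2}^{-1}vv^{T}\bigr)$. (Here $\sigma_{2}\neq 0$ since $\tilde{\omega}\in\Gamma_{2}(M)$, so the divisions are legitimate.) It remains to show that the last determinant equals $n-1$.

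Next I would write $I-\mathbf{1}\mathbf{1}^{T}+\sigma_{2}^{-1}vv^{T}=I+ACA^{T}$, where $A=[\,\mathbf{1}\mid v\,]$ is the $n\times 2$ matrix with columns $\mathbf{1}$ and $v$, and $C=\mathrm{diag}(-1,\sigma_{2}^{-1})$. The matrix determinant lemma (Sylvester's identity) then gives $\det(I+ACA^{T})=\det(I_{2}+CA^{T}A)$, reducing everything to a $2\times 2$ determinant. To evaluate $A^{T}A$ I would use $\sigma_{1}(i)=\sigma_{1}-\eta_{i}$ together with $\sum_{i}\eta_{i}=\sigma_{1}$ and $\sum_{i}\eta_{i}^{2}=\sigma_{1}^{2}-2\sigma_{2}$, which yield $\sum_{i}\sigma_{1}(i)=(n-1)\sigma_{1}$ and $\sum_{i}\sigma_{1}(i)^{2}=(n-1)\sigma_{1}^{2}-2\sigma_{2}$; thus $A^{T}A$ has entries $n$, $(n-1)\sigma_{1}$, $(n-1)\sigma_{1}$, $(n-1)\sigma_{1}^{2}-2\sigma_{2}$.

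Finally, forming $I_{2}+CA^{T}A$ and expanding the $2\times 2$ determinant, the terms proportional to $\sigma_{1}^{2}\sigma_{2}^{-1}$ cancel and one is left with exactly $n-1$; multiplying by $\sigma_{2}^{-n}$ gives $\det(-G^{i\overline{i},j\overline{j}})=(n-1)\sigma_{2}^{-n}$. There is no genuinely hard step here: the rank-two structure of $M-\sigma_{2}^{-1}I$ is what does the work, and the only points requiring care are the bookkeeping of the power sums $\sum_{i}\sigma_{1}(i)$ and $\sum_{i}\sigma_{1}(i)^{2}$ in terms of $\sigma_{1},\sigma_{2}$ and the sign carried by $C$.
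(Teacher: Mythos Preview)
Your argument is correct and complete: the uniform expression $M_{ik}=\sigma_{2}^{-1}\delta_{ik}-\sigma_{2}^{-1}+\sigma_{2}^{-2}\sigma_{1}(i)\sigma_{1}(k)$ does follow from (\ref{Definition of Fijkl}), the factorization $M=\sigma_{2}^{-1}(I+ACA^{T})$ is accurate, Sylvester's identity $\det(I_{n}+ACA^{T})=\det(I_{2}+CA^{T}A)$ applies, and the $2\times 2$ determinant does collapse to $n-1$ after the $\sigma_{1}^{2}\sigma_{2}^{-1}$-terms cancel. The power-sum identities you quote are the right ones.

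Your route is genuinely different from the paper's. The paper also pulls out $\sigma_{2}^{-2}$ and writes $(-\sigma_{2}^{2}G^{i\overline{i},j\overline{j}})=M_{1}-M_{2}$ with $M_{1}=\vec{\sigma}^{\,T}\vec{\sigma}$ rank one and $M_{2}$ the ``all-$\sigma_{2}$'s off the diagonal'' matrix; but instead of invoking the matrix determinant lemma it exploits that any two columns of $M_{1}$ are proportional to expand $\det(M_{1}-M_{2})$ by column multilinearity into $\sum_{i}\det A_{i}+(-1)^{n}\det M_{2}$, then evaluates each $n\times n$ determinant $\det A_{i}$ by elementary row operations and sums (using the same identity $\sum_{i}\sigma_{1}(i)^{2}=(n-1)\sigma_{1}^{2}-2\sigma_{2}$ you use). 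Your approach is shorter and more conceptual: recognizing $M-\sigma_{2}^{-1}I$ as rank two and reducing to a single $2\times 2$ determinant avoids manipulating $n$ separate $n\times n$ determinants. The paper's approach, on the other hand, is entirely self-contained and does not appeal to an external identity like Sylvester's, which may be preferable for readers unfamiliar with it.
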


\begin{proof}
For convenience, we define $\vec{\sigma}=(\sigma_{1}(1),\cdots,\sigma_{1}(n))$ and $M_{1}=\vec{\sigma}^{T}\vec{\sigma}$, where $\vec{\sigma}^{T}$ denotes the transpose of the vector $\vec{\sigma}$. By the definition of $G^{i\overline{i},j\overline{j}}$, it is clear that
\begin{equation}\label{Lemma 5 equation 1}
(-\sigma_{2}^{2}G^{i\overline{i},j\overline{j}})=M_{1}-M_{2},
\end{equation}
where
\begin{equation*}
M_{2}=\left(
\begin{matrix}
   0          & \sigma_{2}    & \sigma_{2}  & \cdots  & \sigma_{2}   \\
   \sigma_{2} &  0            & \sigma_{2}  & \cdots  & \sigma_{2}   \\
   \vdots     & \vdots        & \vdots      &  ~      & \vdots       \\
   \sigma_{2} &  \sigma_{2}   & \sigma_{2}  & \cdots  & 0            \\
\end{matrix}
\right).
\end{equation*}
Since the rank of matrix $M_{1}$ is one, any two columns of $M_{1}$ are proportional. Combining this and properties of the determinant, we have
\begin{equation}\label{Lemma 5 equation 2}
\det (M_{1}-M_{2})=\sum_{i=1}^{n}\det A_{i}+(-1)^{n}\det M_{2},
\end{equation}
where
\begin{equation*}
\begin{split}
& \qquad\qquad\qquad\qquad\qquad\qquad\ \text{the $i$-th column}\\
& A_{i}=\left(
\begin{matrix}
   0           & -\sigma_{2}   & -\sigma_{2}   & \cdots   & \sigma_{1}(1)\sigma_{1}(i)  & \cdots  & -\sigma_{2}   \\
   -\sigma_{2} &  0            & -\sigma_{2}   & \cdots   & \sigma_{1}(2)\sigma_{1}(i)  & \cdots  & -\sigma_{2}   \\
   \vdots      & \vdots        & \vdots        &  ~       & \vdots                      &   ~     & \vdots        \\
   -\sigma_{2} & -\sigma_{2}   & -\sigma_{2}   & \cdots   & \sigma_{1}(n)\sigma_{1}(i)  & \cdots  & 0             \\
\end{matrix}
\right).
\end{split}
\end{equation*}
Applying some elementary row operations to $A_{i}$, we obtain
\begin{equation*}
\det A_{i}=\sigma_{1}(i)\sigma_{2}^{n-1}\left(\sum_{k=1}^{n}\sigma_{1}(k)-(n-1)\sigma_{1}(i)\right).
\end{equation*}
Therefore,
\begin{equation}\label{Lemma 5 equation 3}
\begin{split}
\sum_{i=1}^{n}\det{(A_{i})} & = \sum_{i=1}^{n}\sigma_{1}(i)\sigma_{2}^{n-1}\left(\sum_{k=1}^{n}\sigma_{1}(k)-(n-1)\sigma_{1}(i)\right)\\
                      & = \sigma_{2}^{n-1}\left(\left(\sum_{i=1}^{n}\sigma_{1}(i)\right)^{2}-\sum_{i=1}^{n}(n-1)(\sigma_{1}(i))^{2}\right)\\
                      & = \sigma_{2}^{n-1}\left((n-1)^2\sigma_{1}^{2}-(n-1)\sum_{i}(\sigma_{1}-\eta_{i})^{2}\right)\\
                      & = (n-1)\sigma_{2}^{n-1}\left(\sigma_{1}^{2}-\sum_{i}\eta_{i}^{2}\right)\\
                      & = 2(n-1)\sigma_{2}^{n}.
\end{split}
\end{equation}
On the other hand, it is clear that
\begin{equation}\label{Lemma 5 equation 4}
\det M_{2}=(-1)^{n-1}(n-1)\sigma_{2}^{n}.
\end{equation}
Then Lemma \ref{Lemma 5} follows from (\ref{Lemma 5 equation 1}), (\ref{Lemma 5 equation 2}), (\ref{Lemma 5 equation 3}) and (\ref{Lemma 5 equation 4}).
\end{proof}

\begin{proof}[Proof of (1) in Lemma \ref{Lemma 6 and Lemma 7}]
Let $a_{1}\geq a_{2}\geq\cdots\geq a_{n}$ and $b_{1}\geq b_{2}\geq\cdots\geq b_{n}$ be the eigenvalues of $M_{1}$ and $M_{2}$, respectively.  Then
\begin{equation*}
a_{1}=\|\vec{\sigma}\|^{2}, ~a_{2}=a_{3}=\cdots=a_{n}=0
\end{equation*}
and
\begin{equation*}
b_{1}=(n-1)\sigma_{2}, ~b_{2}=b_{3}=\cdots=b_{n}=-\sigma_{2}.
\end{equation*}
By Weyl's inequality in matrix theory (cf. \cite[Theorem 4.3.1]{HJ02}), we see
\begin{equation*}
\frac{a_{1}}{\sigma_{2}^{2}}-\frac{b_{1}}{\sigma_{2}^{2}}\leq\kappa_{1}\leq\frac{a_{1}}{\sigma_{2}^{2}}-\frac{b_{n}}{\sigma_{2}^{2}}
\end{equation*}
and
\begin{equation*}
\kappa_{i}\leq\frac{a_{i}}{\sigma_{2}^{2}}-\frac{b_{n}}{\sigma_{2}^{2}} \text{~for $i\geq2$}.
\end{equation*}
It follows
\begin{equation}\label{Lemma 6 equation 1}
\text{$C_{A}^{-1}\lambda_{1}^{2}\leq\kappa_{1}\leq C_{A}\lambda_{1}^{2}$ and $\kappa_{i}\leq C_{A}$ for $i\geq 2$.}
\end{equation}
Thus by  Lemma \ref{Lemma 5},   we  get
\begin{equation}\label{Lemma 6 equation 2}
\kappa_{n}=\frac{\det(-G^{i\overline{i},j\overline{j}})}{\kappa_{1}\kappa_{2}\cdots\kappa_{n-1}}\geq\frac{1}{C_{A}\lambda_{1}^{2}}.
\end{equation}

On the other hand, since $\kappa_{n}$ is the smallest eigenvalue of matrix $(-G^{i\overline{i},j\overline{j}})$, by (\ref{Definition of Fijkl}) and  Corollary  \ref{Lemma 2},  we have
\begin{equation}\label{Lemma 6 equation 3}
\kappa_{n}\leq -G^{1\overline{1},1\overline{1}}=\frac{(\sigma_{1}(1))^{2}}{\sigma_{2}^{2}}
=\left(\frac{\sigma_{2}-\sum_{i>j>1}\eta_{i}\eta_{j}}{\eta_{1}\sigma_{2}}\right)^{2}\leq\frac{1}{C_{A}\lambda_{1}^{2}}.
\end{equation}
Then by  (\ref{Lemma 6 equation 3}) and  (\ref{Lemma 6 equation 1}),  we  have
\begin{equation}\label{Lemma 6 equation 4}
\kappa_{i}\geq\frac{\det(-G^{i\overline{i},j\overline{j}})}{\kappa_{1}\kappa_{2}^{n-3}\kappa_{n}}\geq C_{A}^{-1},~ \forall~ i\leq n-1.
\end{equation}
The first part  (1)  of  Lemma \ref{Lemma 6 and Lemma 7} is proved.
\end{proof}

\begin{proof}[Proof of (2) in Lemma \ref{Lemma 6 and Lemma 7}]
For simplicity, we prove the case when $n=4$. The general case can be proved by  the same way.

Recall that the vector $\xi_{4}$ is the eigenvector of matrix $(-G^{i\overline{i},j\overline{j}})$ corresponding to $\kappa_{4}$.  We   use the following  elementary row operation   of  $(-G^{i\overline{i},j\overline{j}})$  to compute the  components  $\xi_{4}^{i}$ of $\xi_{4}$,
\begin{equation*}
\begin{split}
(\kappa_{4}I_{4}+ & G^{i\overline{i},j\overline{j}})=\\
&\left(
\begin{matrix}
\kappa_{4}-\frac{(\sigma_{1}(1))^{2}}{\sigma_{2}^{2}}           &     \frac{\sigma_{2}-\sigma_{1}(1)\sigma_{1}(2)}{\sigma_{2}^{2}}   & \frac{\sigma_{2}-\sigma_{1}(1)\sigma_{1}(3)}{\sigma_{2}^{2}}    &     \frac{\sigma_{2}-\sigma_{1}(1)\sigma_{1}(4)}{\sigma_{2}^{2}}   \\[3mm]
\frac{\sigma_{2}-\sigma_{1}(2)\sigma_{1}(1)}{\sigma_{2}^{2}}    &     \kappa_{4}-\frac{(\sigma_{1}(2))^{2}}{\sigma_{2}^{2}}          &
\frac{\sigma_{2}-\sigma_{1}(2)\sigma_{1}(3)}{\sigma_{2}^{2}}    &     \frac{\sigma_{2}-\sigma_{1}(2)\sigma_{1}(4)}{\sigma_{2}^{2}}   \\[3mm]
\frac{\sigma_{2}-\sigma_{1}(3)\sigma_{1}(1)}{\sigma_{2}^{2}}    &     \frac{\sigma_{2}-\sigma_{1}(3)\sigma_{1}(2)}{\sigma_{2}^{2}}   &
\kappa_{4}-\frac{(\sigma_{1}(3))^{2}}{\sigma_{2}^{2}}           &     \frac{\sigma_{2}-\sigma_{1}(3)\sigma_{1}(4)}{\sigma_{2}^{2}}   \\[3mm]
\frac{\sigma_{2}-\sigma_{1}(4)\sigma_{1}(1)}{\sigma_{2}^{2}}    &     \frac{\sigma_{2}-\sigma_{1}(4)\sigma_{1}(2)}{\sigma_{2}^{2}}   &
\frac{\sigma_{2}-\sigma_{1}(3)\sigma_{1}(4)}{\sigma_{2}^{2}}    &     \kappa_{4}-\frac{(\sigma_{1}(4))^{2}}{\sigma_{2}^{2}}
\end{matrix}
\right),
\end{split}
\end{equation*}
where $I_{4}$ denotes the identity matrix. There are four steps.

\bigskip

\noindent
{\bf Step 1.} For $i=1,2,3$, multiplying the $4$-th row by $-\frac{\sigma_{1}(i)}{\sigma_{1}(4)}$, and adding that to the $i$-th row, we obtain
\begin{equation*}
\left(
\begin{matrix}
\kappa_{4}-\frac{(\sigma_{1}(1))}{\sigma_{1}(4)\sigma_{2}}     &     \frac{\sigma_{1}(4)-\sigma_{1}(1)}{\sigma_{1}(4)\sigma_{2}}                      & \frac{\sigma_{1}(4)-\sigma_{1}(1)}{\sigma_{1}(4)\sigma_{2}}    &     \frac{\sigma_{1}(4)-\sigma_{1}(1)\sigma_{2}\kappa_{4}}{\sigma_{1}(4)\sigma_{2}}  \\[3mm]
\frac{\sigma_{1}(4)-\sigma_{1}(2)}{\sigma_{1}(4)\sigma_{2}}    &     \kappa_{4}-\frac{(\sigma_{1}(2))}{\sigma_{1}(4)\sigma_{2}}                       &
\frac{\sigma_{1}(4)-\sigma_{1}(2)}{\sigma_{1}(4)\sigma_{2}}    &     \frac{\sigma_{1}(4)-\sigma_{1}(2)\sigma_{2}\kappa_{4}}{\sigma_{1}(4)\sigma_{2}}  \\[3mm]
\frac{\sigma_{1}(4)-\sigma_{1}(3)}{\sigma_{1}(4)\sigma_{2}}    &     \frac{\sigma_{1}(4)-\sigma_{1}(3)}{\sigma_{1}(4)\sigma_{2}}                      &
\kappa_{4}-\frac{(\sigma_{1}(3))}{\sigma_{1}(4)\sigma_{2}}     &     \frac{\sigma_{1}(4)-\sigma_{1}(3)\sigma_{2}\kappa_{4}}{\sigma_{1}(4)\sigma_{2}}  \\[3mm]
\frac{\sigma_{2}-\sigma_{1}(4)\sigma_{1}(1)}{\sigma_{2}^{2}}   &     \frac{\sigma_{2}-\sigma_{1}(4)\sigma_{1}(2)}{\sigma_{2}^{2}}                     &
\frac{\sigma_{2}-\sigma_{1}(3)\sigma_{1}(4)}{\sigma_{2}^{2}}   &     \kappa_{4}-\frac{(\sigma_{1}(4))^{2}}{\sigma_{2}^{2}}
\end{matrix}
\right).
\end{equation*}

\bigskip

\noindent
{\bf Step 2.} For $i=1,2,3$, multiplying the $i$-th row by $\sigma_{1}(4)$, we obtain
\begin{equation*}
\left(
\begin{matrix}
\frac{\sigma_{1}(4)\sigma_{2}\kappa_{4}-\sigma_{1}(1)}{\sigma_{2}}   &     \frac{\sigma_{1}(4)-\sigma_{1}(1)}{\sigma_{2}}                      & \frac{\sigma_{1}(4)-\sigma_{1}(1)}{\sigma_{2}}                       &     \frac{\sigma_{1}(4)-\sigma_{1}(1)\sigma_{2}\kappa_{4}}{\sigma_{2}}  \\[3mm]
\frac{\sigma_{1}(4)-\sigma_{1}(2)}{\sigma_{2}}                       &     \frac{\sigma_{1}(4)\sigma_{2}\kappa_{4}-\sigma_{1}(2)}{\sigma_{2}}  &
\frac{\sigma_{1}(4)-\sigma_{1}(2)}{\sigma_{2}}                       &     \frac{\sigma_{1}(4)-\sigma_{1}(2)\sigma_{2}\kappa_{4}}{\sigma_{2}}  \\[3mm]
\frac{\sigma_{1}(4)-\sigma_{1}(3)}{\sigma_{2}}                       &     \frac{\sigma_{1}(4)-\sigma_{1}(3)}{\sigma_{2}}                      &
\frac{\sigma_{1}(4)\sigma_{2}\kappa_{4}-\sigma_{1}(3)}{\sigma_{2}}   &   \frac{\sigma_{1}(4)-\sigma_{1}(3)\sigma_{2}\kappa_{4}}{\sigma_{2}}    \\[3mm]
\frac{\sigma_{2}-\sigma_{1}(4)\sigma_{1}(1)}{\sigma_{2}^{2}}         &     \frac{\sigma_{2}-\sigma_{1}(4)\sigma_{1}(2)}{\sigma_{2}^{2}}        &
\frac{\sigma_{2}-\sigma_{1}(3)\sigma_{1}(4)}{\sigma_{2}^{2}}         &     \kappa_{4}-\frac{(\sigma_{1}(4))^{2}}{\sigma_{2}^{2}}
\end{matrix}
\right).
\end{equation*}

\bigskip

\noindent
{\bf Step 3.} For $i=2,3$, multiplying the $1$-st row by $-\frac{\sigma_{1}(4)-\sigma_{1}(i)}{\sigma_{1}(4)-\sigma_{1}(1)}$, and adding that to the $i$-th row, we obtain
\begin{equation*}
\left(
\begin{matrix}
\frac{\sigma_{1}(4)\sigma_{2}\kappa_{4}-\sigma_{1}(1)}{\sigma_{2}}   &     \frac{\sigma_{1}(4)-\sigma_{1}(1)}{\sigma_{2}}                      & \frac{\sigma_{1}(4)-\sigma_{1}(1)}{\sigma_{2}}                       &     \frac{\sigma_{1}(4)-\sigma_{1}(1)\sigma_{2}\kappa_{4}}{\sigma_{2}}  \\[3mm]
 a_{21}                                                              &      a_{22}                                                             &
 0                                                                   &      a_{24}                                                             \\[3mm]
 a_{31}                                                              &      0                                                                  &
 a_{33}                                                              &      a_{34}                                                             \\[3mm]
\frac{\sigma_{2}-\sigma_{1}(4)\sigma_{1}(1)}{\sigma_{2}^{2}}         &     \frac{\sigma_{2}-\sigma_{1}(4)\sigma_{1}(2)}{\sigma_{2}^{2}}        &
\frac{\sigma_{2}-\sigma_{1}(3)\sigma_{1}(4)}{\sigma_{2}^{2}}         &     \kappa_{4}-\frac{(\sigma_{1}(4))^{2}}{\sigma_{2}^{2}}
\end{matrix}
\right),
\end{equation*}
where
\begin{equation*}
\begin{split}
a_{i1} & = \frac{\sigma_{1}(4)-\sigma_{1}(i)}{\sigma_{2}}
           -\frac{\sigma_{1}(4)-\sigma_{1}(i)}{\sigma_{1}(4)-\sigma_{1}(1)}\cdot\frac{\sigma_{1}(4)\sigma_{2}\kappa_{4}-\sigma_{1}(1)}{\sigma_{2}},\\
a_{ii} & = \sigma_{1}(4)\kappa_{4}-\frac{\sigma_{1}(4)}{\sigma_{2}},\\
a_{i4} & = \frac{\sigma_{1}(4)}{\sigma_{2}}-\sigma_{1}(i)\kappa_{4}
           -\frac{\sigma_{1}(4)-\sigma_{1}(i)}{\sigma_{1}(4)-\sigma_{1}(1)}\cdot\frac{\sigma_{1}(4)-\sigma_{1}(i)\sigma_{2}\kappa_{4}}{\sigma_{2}},
\end{split}
\end{equation*}
for $i=2,3$.

\bigskip

\noindent
{\bf Step 4.} For $i=2,3$, multiplying the $i$-th row by $-\frac{\sigma_{2}-\sigma_{1}(i)\sigma_{1}(4)}{\sigma_{2}^{2}a_{ii}}$, and adding that to the $4$-th row, we obtain
\begin{equation*}
\left(
\begin{matrix}
\frac{\sigma_{1}(4)\sigma_{2}\kappa_{4}-\sigma_{1}(1)}{\sigma_{2}}   &     \frac{\sigma_{1}(4)-\sigma_{1}(1)}{\sigma_{2}}                      & \frac{\sigma_{1}(4)-\sigma_{1}(1)}{\sigma_{2}}                       &     \frac{\sigma_{1}(4)-\sigma_{1}(1)\sigma_{2}\kappa_{4}}{\sigma_{2}}  \\[3mm]
 a_{21}                                                              &      a_{22}                                                             &
 0                                                                   &      a_{24}                                                             \\[3mm]
 a_{31}                                                              &      0                                                                  &
 a_{33}                                                              &      a_{34}                                                             \\[3mm]
 a_{41}                                                              &      0                                                                  &
 0                                                                   &      a_{44}
\end{matrix}
\right),
\end{equation*}
where
\begin{equation*}
\begin{split}
a_{41} & = \frac{\sigma_{2}-\sigma_{1}(1)\sigma_{1}(4)}{\sigma_{2}^{2}}
           -\sum_{i=2}^{3}\frac{\sigma_{2}-\sigma_{1}(i)\sigma_{1}(4)}{\sigma_{2}^{2}a_{ii}}a_{i1},\\
a_{44} & = \kappa_{4}-\frac{(\sigma_{1}(4))^{2}}{\sigma_{2}^{2}}
           -\sum_{i=2}^{3}\frac{\sigma_{2}-\sigma_{1}(i)\sigma_{1}(4)}{\sigma_{2}^{2}a_{ii}}a_{i4}.
\end{split}
\end{equation*}

\bigskip

By  the part  (1) of Lemma \ref{Lemma 6 and Lemma 7},   a direct calculation shows
\begin{equation}\label{The order of aij}
\begin{split}
|a_{i1}| \leq C_{A}, & ~a_{ii}=O(\lambda_{1}),~a_{i4}=O(\lambda_{1}) \text{~for $i=2,3$,}\\
|a_{41}| & \leq  C\lambda_{1},~a_{44}=O(\lambda_{1}^{2}),
\end{split}
\end{equation}
where $O(\lambda_{1}^{s})$ denotes a term satisfying $C_{A}^{-1}\lambda_{1}^{s}\leq |O(\lambda_{1}^{s})|\leq C_{A}\lambda_{1}^{s}$. Moreover,  we see that
\begin{equation*}
\vec{d}=(d_{1},d_{2},d_{3},d_{4}) = \left(1,\frac{a_{24}a_{41}-a_{21}a_{44}}{a_{22}a_{44}},\frac{a_{34}a_{41}-a_{31}a_{44}}{a_{33}a_{44}},-\frac{a_{41}}{a_{44}}\right)
\end{equation*}
is an eigenvector of matrix $(-G^{i\overline{i},j\overline{j}})$ corresponding to $\kappa_{4}$. By (\ref{The order of aij}), we obtain $|d_{i}|\leq\frac{C_{A}}{\lambda_{1}}$ for $i=2,3,4$. Thus
 $\xi_{4}=\frac{\vec{d}}{\|\vec{d}\|}$ and each $|\xi_{4}^i|^2\le C_A\lambda_1^{-2},~i=2,3,4.$
 The proof of (2) in Lemma \ref{Lemma 6 and Lemma 7} is proved.
\end{proof}

\vskip15mm

\end{document}